\definecolor{dark-blue}{rgb}{0,0,0.6}
\definecolor{Purple}{rgb}{0.2,0,0.25}
\newtheorem{thm}{Theorem}[section]
\newtheorem{cor}[thm]{Corollary}
\newtheorem{lem}[thm]{Lemma}
\newtheorem{prop}[thm]{Proposition}
\newtheorem{defin}[thm]{Definition}
\theoremstyle{definition}
\newtheorem{remark}[thm]{Remark}
\newcommand{\dom}{\textnormal{dom}}
\newcommand{\sign}{\textnormal{sign}}
\newcommand{\Int}{\textnormal{Int}}
\newcommand{\diam}{\textnormal{diam}}
\newcommand{\cl}{\overline}
\newcommand{\R}{\mathbb{R}}
\newcommand{\N}{\mathbb{N}}
\newcommand{\bref}[1]{\textbf{\ref{#1}}} 
\newcommand{\beqref}[1]{\textbf{(\ref{#1})}} 
\subjclass[2010]{52A41, 52B55, 46N10, 90C25, 90C30, 46T99, 47N10, 49M37, 26B25, 58C05}
\keywords{Bregman divergence, Bregman function, gauge,  negative Boltzmann-Gibbs-Shannon entropy, negative Burg entropy, negative Havrda-Charv\'at-Tsallis entropy, negative iterated log entropy,
relative uniform convexity, strongly convex, uniformly convex}
\begin{document}

\date{8 April 2019}
\title[Re-examination of Bregman functions]{Re-examination of Bregman functions and new properties of their  divergences}

\author{Daniel Reem}
\address{Daniel Reem, Department of Mathematics, The Technion - Israel Institute of Technology, 3200003 Haifa, Israel.} 
\email{dream@technion.ac.il}
\author{Simeon Reich}
\address{Simeon Reich, Department of Mathematics, The Technion - Israel Institute of Technology, 3200003 Haifa, Israel.} 
\email{sreich@technion.ac.il}
\author{Alvaro De Pierro}
\address{Alvaro De Pierro, CNP$\textnormal{q}$, Brazil}
\email{depierro.alvaro@gmail.com}

\begin{abstract} 
The Bregman divergence (Bregman distance, Bregman measure of distance) is a certain useful substitute for a distance, obtained from a well-chosen function (the ``Bregman function''). Bregman functions and divergences have been extensively investigated during the last decades and have found applications in optimization, operations research, information theory, nonlinear analysis, machine learning and more. This paper re-examines various aspects related to the theory of Bregman functions and divergences. In particular, it  presents many  sufficient conditions which allow the construction of  Bregman functions in a general setting and introduces new Bregman functions (such as a negative iterated log entropy). Moreover, it sheds new light on several known Bregman functions such as quadratic entropies, the negative Havrda-Charv\'at-Tsallis entropy, and the negative Boltzmann-Gibbs-Shannon entropy, and it shows that the negative Burg entropy, which is not a Bregman function according to the classical theory but nevertheless is known to have ``Bregmanian properties'', can, by  our re-examination of the theory, be considered as a Bregman function. Our analysis yields several by-products of independent interest such as the introduction of the concept of relative uniform convexity (a certain generalization of uniform convexity), new properties  of uniformly and strongly convex functions, and results in Banach space theory.  
\end{abstract}

\maketitle
\tableofcontents

\section{Introduction}\label{sec:Introduction}
The Bregman divergence (the Bregman distance, the Bregman measure of distance) is a certain  substitute for a distance. Roughly speaking, given a space $X$, for example, $\R^n$ with the Euclidean norm ($n\in\N$)  or a more  general real normed space, and given a well-chosen convex function $b$ which is differentiable  on an open and convex subset of $X$, the Bregman divergence induced by $b$ is 
\begin{equation}\label{eq:BregmanB}
B(x,y):=b(x)-b(y)-\langle b'(y),x-y\rangle,
\end{equation}
where $y$ is in the open set, $x\in X$, and $\langle b'(y),x-y\rangle$ is the 
 value of the continuous linear functional $b'(y):X\to \mathbb{R}$ at the point $x-y$. This divergence, which  was introduced in the pioneering paper of Bregman \cite{Bregman1967jour} in 1967 and re-emerged in the 1981 paper of Censor and Lent  \cite{CensorLent1981jour}, has been extensively investigated since then and found applications in various areas  including  information theory, nonlinear analysis, optimization,  operations research, inverse problems, machine learning, and even computational geometry. For a rather short and partial list of related papers, see \cite{BanerjeeGuoWang2005jour,  BanerjeeMeruguDhillonGhosh2005jour, BauschkeBorweinCombettes2001jour, BauschkeBorweinCombettes2003jour,  
BauschkeMacklemSewellWang, BeckTeboulle2003jour, BoissonnatNielsenNock2010jour, BorweinReichSabach2011jour,  BregmanCensorReich1999jour, BurachikScheimberg2000jour, ButnariuIusem2000book, ButnariuResmerita2006jour, ButnariuReichZaslavski2001jour, ButnariuReichZaslavski2001incol, ButnariuReichZaslavski2006col, Cayton, CensorDePierroIusem1991jour, CensorIusemZenios1998jour, CensorReich1996jour,  CensorZenios1997Book, ChenTeboulle1993jour, CichockiAmari2010jour, CollinsSchapireSinger2002jour, Csiszar1991jour, DePierro1991incol, DePierroIusem1986jour, Eckstein1998jour, Elfving1989jour, GuptaHuang, JonesByrne1990jour, KaplanTichatschke2004jour, Kiwiel1997jour, MurataTakenouchiKanamoriEguchi2004jour, ReichSabach2010jour, ReichSabach2010b-jour,TaskarLacoste-JulienJordan2006jour, Teboulle1992jour, Teboulle2007jour, YinOsherGoldfarbDarbon2008jour, Zaslavski2010b-jour}.

In this paper we re-examine various aspects related to the theory of Bregman functions and divergences. One major aspect that we consider is the very definition of this concept in a rather general setting, and the presentation of many sufficient conditions which allow the construction of Bregman functions in such a setting. This general treatment, which is presented in Section \bref{sec:Bregman} and is especially relevant to real Hilbertian spaces (namely, spaces which are isomporphic to Hilbert spaces), is complemented in three ways; first, by the introduction of concrete Bregman functions in finite- and infinite-dimensional spaces and (such as a negative iterated log entropy which is discussed in Section \bref{sec:IteratedLog}, and the $\ell_2$-type entropy which is discussed in Section \bref{sec:Ell2type}); second, by shedding new light on several known Bregman functions such as the negative Boltzmann-Gibbs-Shannon entropy (Section \bref{sec:GibbsShannon}), the negative Havrda-Charv\'at-Tsallis entropy (Section \bref{sec:HavrdaCharvatTsallis}), and quadratic entropies (Subsection \bref{subsec:Quadratic}); third, by showing (in Remark \bref{rem:DefBregman} and Section \bref{sec:Burg}) that the negative Burg entropy, which is not a Bregman function according to the classical theory but nevertheless is known to have ``Bregmanian properties'', can, by our re-examination (and extension) of the theory, be considered a Bregman function. 

Our analysis yields a few by-products of independent interest. For instance, we re-examine in Section \bref{sec:UniformConvexity} the notion of uniform convexity and present a generalization of it which we call ``relative uniform convexity''. This notion turns out to be useful in proving the boundedness of the level sets of functions which are candidates to be Bregman functions as shown, for instance, in Proposition \bref{prop:BregmanProperties}\beqref{BregProp:BregPsiS1S2},\beqref{BregProp:B(x,S)Bounded==>SisBounded},\beqref{BregProp:LevelSetBounded}). Along the way we derive new properties of uniformly and strongly convex functions (see, for example, Section \bref{sec:UniformConvexity}, Proposition \bref{prop:BregmanProperties}, Corollaries \bref{cor:BregmanFunction-b'-is-unicont}--\bref{cor:BregmanFunctionFiniteDim-b'-is-cont} and  Subsection \bref{subsec:StrongConvexity}), and introduce certain new relative or non-relative uniform convexity  properties of concrete functions (for instance, in Section \bref{sec:GibbsShannon} we discuss  the negative Boltzmann-Gibbs-Shannon entropy, in Section \bref{sec:HavrdaCharvatTsallis} we discuss the negative Havrda-Charv\'at-Tsallis entropy, and in Section \bref{sec:Burg} we discuss the negative Burg entropy). We also present a certain contribution to the theory of Banach spaces, that is, we  introduce the class of Banach spaces having the component-* property (Subsection \bref{subsec:weak-to-weak*}). Finally, in a few sections of our paper we discuss briefly several not very well-known historical aspects related to the theory of Bregman functions and divergences. We also note that in the companion paper \cite{ReemReichDePierro(TEPROG)2019accept}, we apply the theory mentioned above to proximal forward-backward algorithms based on Bregman divergences in certain Banach spaces.  

We want to elaborate more on the relation of our paper to optimization theory. As can be seen in many of the references mentioned earlier, various optimization algorithms which involve Bregman divergences have been published. Convergence proofs of such algorithms have essentially two aspects:
one is the geometrical properties of the  divergence that generate Fej\'er type sequences, and the other one is related to the behavior of the  divergence on the boundary of the effective domain of the Bregman function which induces the divergence. This second aspect has generated the necessity to develop different convergence proofs depending on the involved functions. An important contribution of our paper is that we address this issue in various ways, for instance in Proposition \bref{prop:BregmanProperties} and in the sections devoted to entropy-like functions (Sections \bref{sec:GibbsShannon}--\bref{sec:SomewhatKnownBregman}). 

To the best of our knowledge, the results that we present in this paper are new. We note, however, that versions of a limited number of our results are known, mainly in a restricted setting (for example, in finite-dimensional Euclidean spaces instead of, say, in all normed spaces). In such cases we provide references to these versions. It is also worthwhile noting that large parts of the  discussion below are rather detailed and, in particular, full proofs are provided. We decided to do so not only in order to make the discussion as self-contained as possible, but also in order to clarify some issues which are usually  overlooked in the literature and in order to avoid the possibility of missing certain delicate points. Another reason for the detailed discussion is the fact that the treatment of several statements and examples  requires a lot of case analysis (for instance, due to certain parameters which appear in the formulation of these statements/examples). 

\section{Preliminaries}\label{sec:Preliminaries}
This section introduces the notation and main definitions used in this paper. 
We consider a real normed space $(X,\|\cdot\|)$, $X\neq \{0\}$. The dual of $X$, that is, the set of all continuous linear functionals from $X$ to $\R$, is denoted by $X^*$ and we let $\langle x^*,x\rangle:=x^*(x)$ for each $x^*\in X^*$ and $x\in X$. We say that $X$ is Hilbertian if there exists a continuous and invertible linear mapping between $X$ and a Hilbert space. In particular, if $X$ is a Banach space and the norm of $X$ is equivalent to another norm on $X$ which is induced by an inner product, then $X$ is Hilbertian. The closure of $V\subseteq X$ is denoted by $\cl{V}$ and the interior of $V$ is denoted by $\Int(V)$. The effective domain of a function $b:X\to(-\infty,\infty]$ is the set $\dom(b):=\{x\in X: b(x)<\infty\}$ and $b$ is said to be proper if its effective domain is nonempty. It is well known and immediate that $\dom(b)$ is convex whenever $b$ is 
convex. The closed interval between two points $x,y\in X$ is the set $[x,y]:=\{tx+(1-t)y: t\in [0,1]\}$, the open interval between them is the set $(x,y):=[x,y]\backslash\{x,y\}$ and the half open interval with endpoint $x$ is $[x,y):=[x,y]\backslash\{y\}$. Given $\emptyset\neq V\subseteq X$ and $f:V\to X^*$, we say that $f$ is weak-to-weak$^*$ sequentially continuous at $x\in V$ if for each sequence $(x_i)_{i=1}^{\infty}\in V$ which converges weakly to $x$ and for each $z\in X$ we have $\lim_{i\to\infty}\langle f(x_i),z\rangle=\langle f(x),z\rangle$. If $f$ is weak-to-weak$^*$ sequentially continuous at each $x\in V$, then $f$ is said to be weak-to-weak$^*$ sequentially continuous on $V$. A well-known fact  which will be used frequently in the sequel is that in finite-dimensional spaces the weak and strong (norm) topologies coincide, and on the dual spaces of finite-dimensional spaces the weak$^*$ and strong topologies coincide. 

We say that $M:X^2\to\R$ is a bilinear form if   both $x\mapsto M(x,y)$ and $y\mapsto M(x,y)$ are linear functions from $X$ to $\R$ for each $y\in X$ and $x\in X$, respectively. The norm of $M$ is $\|M\|:=\sup\{M(x,y)/(\|x\|\|y\|): x,y\in X\backslash\{0\}\}$ and we say that $M$ is bounded whenever $\|M\|<\infty$. A well-known fact is that if $M$ is bounded, then it is continuous, and conversely, if $X$ is a Banach space and $M$ is continuous, then it is bounded (see, for example, \cite[p. 49]{Brezis2011book} for a more general statement of the latter implication; in general, various well-known facts which are stated here without a reference can be found in, say, \cite{Brezis2011book, Kreyszig1978book}). 

\begin{defin}\label{def:Gateaux}
Given $b:X\to(-\infty,\infty]$, assume that $\Int(\dom(b))$ is nonempty and that $x\in \Int(\dom(b))$. We say that $b$ is G\^ateaux differentiable at $x$ if there exists a continuous linear functional $b'_G(x)\in X^*$ such that 
\begin{equation}\label{eq:Gateaux}
\langle b'_G(x),y\rangle=\lim_{t\to 0}\frac{b(x+ty)-b(x)}{t},\quad \forall\, y\in X.
\end{equation}
We say that $b$ is Fr\'echet differentiable at $x$  if there exists a continuous linear functional $b'_F(x)\in X^*$ such that for all $h\in X$ sufficiently small, 
\begin{equation}\label{eq:Frechet}
 b(x+h)=b(x)+\langle b'_F(x),h\rangle+o(\|h\|).
\end{equation}
We say that $b$ is twice Fr\'echet  differentiable at $x$ if there exists a continuous bilinear form $b''(x):X^2\to\R$ such that for all $h\in X$ sufficiently small, we have
\begin{equation}\label{eq:b''}
 b(x+h)=b(x)+\langle b'_F(x),h\rangle+\frac{1}{2}b''(x)(h,h)+o(\|h\|^2).
\end{equation}
We say that $b$ is G\^ateaux/Fr\'echet differentiable in an open subset $U\subseteq\dom(b)$ if $b$ is G\^ateaux/Fr\'echet differentiable at each $x\in U$. If there is no ambiguity regarding  the type of differentiation, then we denote by $b'(x)$ the corresponding derivative of $b$ at $x$ instead of $b'_G(x)$ or $b'_F(x)$ and say that $b$ is differentiable. 
\end{defin}
It is well known \cite[pp. 13-14]{AmbrosettiProdi1993book} that if $b$ is Fr\'echet differentiable at some $x$, then it is G\^ateaux differentiable at $x$ and $b'_G(x)=b'_F(x)$, and, on the other hand, if $b'_G$ exists in $U$ and is continuous at $x\in U$, then $b'_F(x)$ exists and is equal to $b'_G(x)$. Another well-known fact is that when $X$ is finite-dimensional and $b$ is lower semicontinuous, convex and proper, then  $b$ is G\^ateaux differentiable at $x\in\dom(b)$ if and only if it is Fr\'echet differentiable there \cite[Corollary 17.44, p. 306]{BauschkeCombettes2017book}, \cite[Theorem 25.2, p. 244]{Rockafellar1970book}. In all the concrete examples considered in this paper the derivatives will coincide since $b$ will be  continuously Fr\'echet differentiable on $U$, but in a few auxiliary results we will assume the weaker condition of G\^ateaux differentiability. In certain cases we may identify $b'$ with a vector in a space $Y$ which is naturally isomporphic to $X^*$ (for instance, when $X$ is a finite-dimensional space, a Hilbert space or an  $\ell_p$ spaces, $p\in (1,\infty)$). 

The subdifferential of $b$ at $x\in X$ is the set $\partial b(x):=\{x^*\in X^*: b(x)+\langle x^*,w-x\rangle\leq b(w)\,\,\forall w\in X\}$. The effective  domain of $\partial b$ is the set $\dom(\partial b):=\{x\in X:  \partial b(x)\neq\emptyset\}$. Of course, $\partial b(x)=\emptyset$ if $b$ is proper and $x\notin\dom(b)$, namely $\dom(\partial b)\subseteq \dom(b)$. If $X$ is finite-dimensional, and $b$ is proper and convex, and the subset $U:=\Int(\dom(b))$ is nonempty, and $b$ is G\^ateaux differentiable in $U$ and  $\lim_{i\to\infty}\|b'(y_i)\|=\infty$ for each sequence $(y_i)_{i=1}^{\infty}$ in $U$ which converges to some point on the boundary of $\dom(b)$, then we say that $b$ is essentially smooth. If $X$ is finite dimensional and $b$ is proper, convex, and strictly convex on every subset of $\dom(\partial b)$, then we say that $b$ is essentially strictly convex. If $b$ is essentially smooth and essentially strictly convex, then we say that $b$ is Legendre.

\section{Uniform convexity and relative uniform convexity}\label{sec:UniformConvexity}
In this section we recall the notions of uniform convexity and  strong convexity, and introduce the notions of relative uniform convexity and relative strong convexity. These notions, the definitions of which  are given in Definition \bref{def:TypesOfConvexity} below, are central to later sections. We clarify several  issues related to them in Remark \bref{rem:TypesOfConvexity} below, and then prove a certain lemma (Lemma \bref{lem:MonotonePsi}). 

\begin{defin}\label{def:TypesOfConvexity}
Let $(X,\|\cdot\|)$ be a normed space and let $b:X\to(-\infty,\infty]$ be convex. Denote $K:=\dom(b)$ and  suppose that $S_1$ and $S_2$ are two nonempty subsets (not necessarily convex) of $K$.  
\begin{enumerate}[(I)] 
\item\label{def:RelativeUniformlyConvex} The function $b$ is called uniformly convex relative to $(S_1,S_2)$ (or relatively uniformly convex on  $(S_1,S_2)$) if there exists $\psi:[0,\infty)\to [0,\infty]$, called a relative gauge, such that $\psi(t)\in (0,\infty]$ whenever $t>0$ and for each $\lambda\in (0,1)$ and each $(x,y)\in S_1\times S_2$, 
\begin{equation}\label{eq:RelativeUniformConvexity}
b(\lambda x+(1-\lambda)y)+\lambda(1-\lambda)\psi(\|x-y\|)\leq \lambda b(x)+(1-\lambda) b(y).
\end{equation}
If $S:=S_1=S_2$ and $b$ is uniformly convex relative to $(S_1,S_2)$, then $b$ is said to be  uniformly convex on $S$. 

\item\label{def:ModulusConvexity} The optimal gauge of $b$ relative to $(S_1,S_2)$  is the function defined for each $t\in [0,\infty)$ by:
\begin{multline}\label{eq:Relative_psi}
\psi_{b,S_1,S_2}(t):=\inf\left\{ \frac{\lambda b(x)+(1-\lambda)b(y)-b(\lambda x+(1-\lambda)y)}{\lambda(1-\lambda)}: (x,y)\in S_1\times S_2, 
\right.\\\left.
\|x-y\|=t,\lambda\in(0,1)
\vphantom{\frac{\lambda b(x)+(1-\lambda)b(y)-b(\lambda x+(1-\lambda)y)}{\lambda(1-\lambda)}}
\right\},
\end{multline} 
where we use the standard convention that $\inf \emptyset=\infty$, namely, if there is no $(x,y)\in S_1\times S_2$ such that $\|x-y\|=t$, then $\psi_{b,S_1,S_2}(t):=\infty$. 
 The optimal gauge is also called the modulus of relative uniform convexity of $b$ on $(S_1,S_2)$ or simply the optimal relative gauge, and $b$ is uniformly convex on $(S_1,S_2)$ if and only if $\psi_{b,S_1,S_2}(t)>0$ for every $t\in (0,\infty)$. If $S:=S_1=S_2$, then we denote $\psi_{b,S}:=\psi_{b,S_1,S_2}$ and call $\psi_{b,S}$ the modulus of uniform convexity of $b$ on $S$.
\item A function $\psi:S_1\times S_2\to \R$ which satisfies the inequality 
\begin{equation}\label{eq:pre-gauge}
\lambda(1-\lambda)\psi(x,y)+b(\lambda x+(1-\lambda)y)\leq \lambda b(x)+(1-\lambda)b(y),\quad\forall \,(x,y)\in S_1\times S_2,\lambda\in (0,1),
\end{equation} 
is called a relative pre-gauge of $b$ on $(S_1,S_2)$ (or relative to $(S_1,S_2)$). When $S_1=S_2$, then $\psi$ is called a pre-gauge. 
\item The function $b$ is said to be uniformly convex on closed, convex, and bounded subsets of $K$ if $b$ is  uniformly convex on each nonempty subset $S\subseteq K$ which is closed, convex and bounded.  
\item\label{def:StronglyConvexGlobal} The function $b$ is said to be strongly convex relative to $(S_1,S_2)$ if there exists $\mu>0$ (which depends on $S_1$ and $S_2$ and sometimes will be denoted by $\mu[S_1,S_2]$), called a parameter of strong convexity  of $b$ on $(S_1,S_2)$, such that $b$ is uniformly convex relative to $(S_1,S_2)$ with $\psi(t):=\frac{1}{2}\mu t^2$, $t\in [0,\infty)$ as a relative gauge. If $S:=S_1=S_2$ and $b$ is strongly convex relative to $(S_1,S_2)$, then $b$ is said to be strongly convex on $S$.
\end{enumerate}
\end{defin}

\begin{remark}\label{rem:TypesOfConvexity}
Here are a few comments regarding Definition \bref{def:TypesOfConvexity}. In all of these comments we assume that the setting of Definition \bref{def:TypesOfConvexity} holds.  
\begin{enumerate}[(i)]
\item\label{item:PsiIsNonnegative} The convexity assumption on $b$ is needed simply to ensure that the fraction which appears in the definition of $\psi_{b,S_1,S_2}$ (in Definition \bref{def:TypesOfConvexity}\beqref{def:ModulusConvexity}) is bounded below by zero, and hence  $\psi_{b,S_1,S_2}$ is nonnegative even if $b$ is not uniformly convex relative to $(S_1,S_2)$. Of course, if $b$ satisfies \beqref{eq:RelativeUniformConvexity} with some relative gauge $\psi$, then $b$ is automatically convex on $[x,y]$.
\item\label{item:diam(S_1,S_2)} A simple property which follows from \beqref{eq:RelativeUniformConvexity}  is that if $b$ is uniformly convex relative to $(S_1,S_2)$, then, given a relative gauge $\psi$, we have $\psi(t)\in (0,\infty)$ for all $t>0$ which satisfies $t<\textnormal{diam}(S_1,S_2):=\sup\{\|x-y\|: x\in S_1,\,y\in S_2\}$. Moreover, if $\textnormal{diam}(S_1,S_2)=\|x-y\|$ for some $x\in S_1$, $y\in S_2$, then $\psi(\textnormal{diam}(S_1,S_2))<\infty$. If $\textnormal{diam}(S_1,S_2)<\infty$, then for all $t>\textnormal{diam}(S_1,S_2)$ the value of $\psi(t)$ is not important, but, nonetheless, we have $\psi_{b,S_1,S_2}(t)=\infty$ according to \beqref{eq:Relative_psi}. In particular, if $\diam(S_1,S_2)=0$, a case which can only happen  when $S:=S_1=S_2$ is a singleton, then $\psi_{b,S_1,S_2}(t)=\infty$ for every $t>0$; in this degenerate case we still regard $b$ as being uniformly convex with respect to $S$. 
\item\label{item:psi(0)=0} If $S_1\cap S_2\neq \emptyset$, then $\psi_{b,S_1,S_2}(0)=0$, as follows immediately from  \beqref{eq:Relative_psi}. Another immediate observation (which follows from \beqref{eq:RelativeUniformConvexity} and \beqref{eq:Relative_psi})  is that if $b$ is uniformly convex relative to $(S_1,S_2)$, then $\psi_{b,S_1,S_2}(t)\in (0,\infty]$ for each $t>0$. 
\item\label{item:UniformlyConvex==>StrictlyConvex} If $b$ is uniformly convex relative to $(S_1,S_2)$, then $b$ is strictly convex on $[x,y]$ for all $x\in S_1$ and $y\in S_2$ satisfying $x\neq y$.  Indeed, by our assumption there exists a gauge $\psi:[0,\infty)\to[0,\infty]$ satisfying $b(\lambda x+(1-\lambda)y)\leq \lambda b(x)+(1-\lambda)b(y)-\lambda(1-\lambda)\psi(\|x-y\|)$ for all $\lambda\in (0,1)$. Since $x\neq y$,  the definition of $\psi$ implies that $\psi(\|x-y\|)>0$. Hence $b(\lambda x+(1-\lambda)y)<\lambda b(x)+(1-\lambda)b(y)$, namely $b$ is strictly convex on $[x,y]$. This observation can be extended: since intervals are bounded and convex subsets, we conclude that if $b$ is assumed to be uniformly convex relative to all pairs $(S_1,S_2)$ of nonempty, bounded and convex subsets of $K$, then $b$ is strictly convex on $K$.

\item Relative uniform convexity is a notion weaker than uniform convexity. Indeed, a function which is uniformly convex on some $\emptyset\neq S\subseteq K$ is uniformly convex relative to $(S_1,S_2)$ for all nonempty subsets $S_1$, $S_2$ of $S$, where the relative gauge coincides with the gauge. On the other hand, as will be shown in Subsections \bref{subsec:BGS-NoUniformlyConvex}, \bref{subsec:NoUniformConv q=0.5}, \bref{subsec:BurgNoGlobalUC} and \bref{subsec:IteratedLogNoGlobalUC} respectively, the negative Boltzmann-Gibbs-Shannon entropy, a certain instance of the negative  Havrda-Charv\'at-Tsallis entropy, the negative Burg entropy and the negative iterated log entropy are not uniformly convex on (the interior of) their effective domains. On the other hand, the first three functions are uniformly convex relative to some pairs $(S_1,S_2)$, where $S_2$ is almost the whole interior of the effective domain. 
\item A pre-gauge (from \beqref{eq:pre-gauge}) quantifies, in some sense, how much $b$ is strictly convex on $[x,y]$, namely, how much the basic convexity inequality becomes a strict inequality on $[x,y]$: the more the pre-gauge is positive at $(x,y)$, the more the function is strictly convex on $[x,y]$. If $b$ is convex on $[x,y]$ but not strictly convex there, then any pre-gauge of $b$ on $S_1\times S_2$ must be nonpositive at $(x,y)\in S_1\times S_2$. 
\item If $\emptyset \neq S\subseteq K$, then a gauge is an improved pre-gauge on $S^2$ since it provides a positive lower bound for the (basic convexity) difference $\lambda b(x)+(1-\lambda)b(y)-b(\lambda x+(1-\lambda)y)$ for given $x,y\in S$, a lower bound which does not depend on the points $x$ and $y$ but only on the distance between them. 
\item A nonnegative relative pre-gauge of $b$ also quantifies the strict convexity of $b$. A nonnegative relative gauge is an improved relative pre-gauge which quantifies the strict convexity of $b$ in a partial-uniform way: it holds uniformly in the sense that only the distance between $x$ and $y$ matters, as long as $x$ is taken from $S_1$ and $y$ is taken from $S_2$. 
\item It is possible to generalize even further the concept of relative pre-gauge so that a better lower bound is given for the convexity difference. Indeed, we can consider any function $\psi:S_1\times S_2\times [0,1]\to\R$ which satisfies the inequality $\psi(x,y,\lambda)+b(\lambda x+(1-\lambda)y)\leq \lambda b(x)+(1-\lambda)b(y)$ for every $(x,y,\lambda)\in S_1\times S_2\times [0,1]$. Such a ``relative primitive gauge'' $\psi$, when it is nonnegative, quantifies in a finer way the (strict) convexity of $b$. 

\item\label{item:mu[x,y]} A simple but useful observation which will be used in later sections is the following one: if for each $(x,y)\in S_1\times S_2$ the function  $b$ is strongly convex on the line segment $[x,y]$ with a strong convexity parameter $\mu[x,y]$, then $\phi(x,y):=0.5\mu[x,y]\|x-y\|^2$ is a relative pre-gauge for $b$ on $(S_1,S_2)$. In particular, if one is able to find a function $\psi:[0,\infty)\to [0,\infty)$ which is positive on $(0,\infty)$ and satisfies the inequality $\psi(\|x-y\|)\leq \phi(x,y)$ for each $(x,y)\in S_1\times S_2$, then $b$ is uniformly convex relative to $(S_1,S_2)$ with $\psi$ as a relative gauge. 
\item\label{item:UniformlyConvexTranslation} It is immediate to check that if $b$ is uniformly convex relative to $(S_1,S_2)$ with a gauge $\psi$ and if $z_0\in X$ is given, then the function $\tilde{b}:X\to(-\infty,\infty]$ which is defined by $\tilde{b}(\tilde{x}):=b(\tilde{x}+z_0)$ for each $\tilde{x}\in X$ (and having an effective domain $\dom(\tilde{b})=\dom(b)-z_0$) is uniformly convex relative to $(S_1-z_0,S_2-z_0)$ with a  gauge $\tilde{\psi}=\psi$.  In particular, if $b$ is strongly convex relative to $(S_1,S_2)$, then $\tilde{b}$ is strongly convex relative to $(S_1-z_0,S_2-z_0)$ (with the same parameter of strong convexity). Moreover, if, given $x\in \dom(b)$  and $r_x\geq 0$, we know that $b$ is uniformly (or strongly) convex relative to $(\{x\},\{y\in \dom(b): \|y\|\geq r_x\})$ with a gauge $\psi$, then a simple verification (using the triangle inequality) shows that if $\tilde{x}:=x-z_0$ and $r_{\tilde{x}}:=r_x+\|z_0\|$, then $\tilde{b}$ is uniformly (or strongly) convex relative to  $(\{\tilde{x}\},\{\tilde{y}\in \dom(\tilde{b}): \|\tilde{y}\|\geq r_{\tilde{x}}\})$ with the same gauge. 
\item The notions of relative uniform convexity, pre-gauge,  and relative pre-gauge seem to be new, and so is the possibility to  consider uniform and strong convexity of a function on a subset $S$ of $K$ which is not necessarily convex. In addition, it seems that the possibility to define a modulus of uniform convexity for functions which are merely convex and not necessarily uniformly convex is new too. This possibility has some implications: for instance, $\psi_{b,S}(t)$ may grow to infinity as at least as fast as a quadratic function in $t$ even if there exists some $t_1>0$ such that $\psi_{b,S}(t)=0$ for all $t\in [0,t_1]$  (see Lemma \bref{lem:MonotonePsi} below). 
\item An important reason for considering relative uniform convexity is the fact that it enables one to show some boundedness properties related to the level sets of $b$ and beyond: see Proposition \bref{prop:BregmanProperties}\beqref{BregProp:BregPsiS1S2},\beqref{BregProp:B(x,S)Bounded==>SisBounded},\beqref{BregProp:LevelSetBounded} and Corollary \bref{cor:BregmanFunction-b'-is-unicont} below. Such properties guarantee that $b$ satisfies Definition  \bref{def:BregmanDiv}\beqref{BregmanDef:BoundedLevelSet} above. Once one knows that the corresponding level sets are bounded and if, in addition, the space $X$ is reflexive, then one can conclude that any sequence contained in the corresponding level set contains a weakly convergent subsequence, a very useful property for proving the convergence of many algorithmic schemes. 
\item Additional information related to uniformly  convex functions can be found in, for instance,   \cite{VladimirovNesterovChekanov1978jour} (the original work in this domain; properties and examples), 
\cite[pp. 63-66]{Nesterov2004book} (strong convexity in finite-dimensional Euclidean spaces),\cite{Zalinescu1983jour},\cite[pp. 203--221]{Zalinescu2002book} (properties and examples),  \cite{ButnariuIusemZalinescu2003jour} (relation to total convexity), and \cite{BGHV2009jour,BorweinVanderwerff2012jour} (properties and examples).  
\end{enumerate}
\end{remark}

We finish this section with a lemma which describes some properties of the modulus of uniform convexity. Part of it is known in a slightly different setting, namely inequality \beqref{eq:psi_ct} below when $b$ is uniformly convex and $S=K$: see \cite[Lemma 1]{VladimirovNesterovChekanov1978jour} and \cite[Proposition 3.5.1, pp. 203-204]{Zalinescu2002book}. Our proof is inspired by \cite[proof of Proposition 3.5.1]{Zalinescu2002book}. 
\begin{lem}\label{lem:MonotonePsi} 
Let $(X,\|\cdot\|)$ be a normed space and $b:X\to(-\infty,\infty]$. Let $K:=\dom(b)$ and suppose that $b$ is convex on a nonempty convex subset $S$ of $K$. Let $\psi_{b,S}$ be the modulus of uniform convexity of $b$ on $S$ (see Definition \bref{def:TypesOfConvexity}\beqref{def:ModulusConvexity}). Then $\phi(t):=\psi_{b,S}(t)/t^2$ is increasing on $(0,\infty)$ and $\psi_{b,S}$ is increasing on $[0,\infty)$ (and both of them are strictly increasing on $[0,\diam(K))$ if $b$ is uniformly convex). In addition, 
\begin{equation}\label{eq:psi_ct}
\psi_{b,S}(ct)\geq c^2\psi_{b,S}(t),\quad\forall c\geq 1, \forall t\geq 0
\end{equation}
and 
\begin{equation}\label{eq:psi_ct,c<1}
\psi_{b,S}(ct)\leq c^2\psi_{b,S}(t),\quad\forall c\in (0,1), \forall t\geq 0.
\end{equation}
Hence if $S$ is not a singleton, then $\psi_{b,S}(c)$ decays to zero at least as fast as a quadratic function in $c$ when $c\to 0$. If $S$ is unbounded and if $\psi_{b,S}(t_0)>0$ for some $t_0>0$ (as happens, in particular, if $b$ is uniformly convex on $S$), then $\psi_{b,S}(c)$ grows to infinity at least as fast as a quadratic function in $c$ when $c$ grows to infinity. 
\end{lem}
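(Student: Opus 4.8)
The plan is to reduce the entire lemma to the single scaling inequality \beqref{eq:psi_ct}, namely $\psi_{b,S}(ct)\ge c^2\psi_{b,S}(t)$ for all $c\ge1$ and $t\ge0$, and then read off every other assertion from it. First I would record that \beqref{eq:psi_ct} and \beqref{eq:psi_ct,c<1} are logically equivalent, via the substitution $c\mapsto 1/c$ (relabelling the base point). Moreover, $\phi(t)=\psi_{b,S}(t)/t^2$ being increasing on $(0,\infty)$ is exactly \beqref{eq:psi_ct} rewritten, since $\phi(s)\le\phi(t)$ for $s<t$ means $\psi_{b,S}(t)\ge(t/s)^2\psi_{b,S}(s)$ with $c=t/s$; and because $\psi_{b,S}\ge0$ with $\psi_{b,S}(0)=0$ (Remark \bref{item:psi(0)=0}), \beqref{eq:psi_ct} also forces $\psi_{b,S}(t)\ge\psi_{b,S}(s)$, i.e.\ monotonicity of $\psi_{b,S}$ on $[0,\infty)$. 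Throughout I would use the optimal-gauge reformulation of \beqref{eq:Relative_psi}: for all $x,y\in S$ and $\lambda\in(0,1)$, $\delta(x,y,\lambda):=\lambda b(x)+(1-\lambda)b(y)-b(\lambda x+(1-\lambda)y)\ge\lambda(1-\lambda)\psi_{b,S}(\|x-y\|)$, with $\psi_{b,S}$ the largest such function. Since $S$ is convex, the distances realised by pairs in $S$ fill an interval $[0,\diam(S))$ or its closure, on which $\psi_{b,S}$ is finite, which makes the $\infty$-bookkeeping harmless.

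Second, and at the core, I would prove \beqref{eq:psi_ct} for $c\in(1,2)$, i.e.\ $\eta:=1/c\in(1/2,1)$. Fix $x,y\in S$ with $\|x-y\|=ct$, parametrise the segment by $\gamma(r)=(1-r)x+ry$, set $g(r):=b(\gamma(r))$, and let $D(r):=(1-r)g(0)+rg(1)-g(r)$ be the convexity defect; then $D$ is concave and nonnegative on $[0,1]$ with $D(0)=D(1)=0$, and $\delta(x,y,\lambda)=D(1-\lambda)$. Every sub-segment $[\gamma(p),\gamma(q)]$ with $q-p=\eta$ has length $t$ and lies in $S$, so applying the definition of $\psi_{b,S}(t)$ to it and substituting $g=(\text{chord})-D$ yields, for all such $p,q$ and all $r\in(p,q)$, the quantitative concavity $D(r)\ge\frac{q-r}{\eta}D(p)+\frac{r-p}{\eta}D(q)+\frac{(q-r)(r-p)}{\eta^2}\psi_{b,S}(t)$. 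Writing $Q(r):=c^2\psi_{b,S}(t)\,r(1-r)$, a direct computation shows $Q$ satisfies this relation with equality, so $F:=D-Q$ satisfies the plain scale-$\eta$ concavity $F(r)\ge\frac{q-r}{\eta}F(p)+\frac{r-p}{\eta}F(q)$ together with $F(0)=F(1)=0$. The task is then to deduce $F\ge0$ on $(0,1)$, which gives $D(s)\ge Q(s)$, hence $\delta(x,y,\lambda)\ge c^2\lambda(1-\lambda)\psi_{b,S}(t)$, hence \beqref{eq:psi_ct} after taking the infimum. This last deduction is the main obstacle: I would exploit that $\eta>1/2$ makes the intervals $[0,\eta]$ and $[1-\eta,1]$ overlap, so scale-$\eta$ concavity on each gives $F(1-\eta)\ge\beta F(\eta)$ and $F(\eta)\ge\beta F(1-\eta)$ with $\beta=(1-\eta)/\eta<1$; hence $F(\eta)\ge\beta^2F(\eta)$ forces $F(\eta)\ge0$ and $F(1-\eta)\ge0$, and then $F(r)\ge\frac{r}{\eta}F(\eta)\ge0$ for $r\in(0,\eta]$ and $F(r)\ge\frac{1-r}{\eta}F(1-\eta)\ge0$ for $r\in[1-\eta,1)$ together cover all of $(0,1)$.

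Third, I would bootstrap to every $c\ge1$: given $c$, choose $N$ with $c^{1/N}<2$ and apply the case just proved $N$ times to the factorisation $c=\prod_{i=1}^N c^{1/N}$, obtaining $\psi_{b,S}(ct)\ge(c^{1/N})^{2N}\psi_{b,S}(t)=c^2\psi_{b,S}(t)$ (the cases $c=1$ and $t=0$ being immediate). This establishes \beqref{eq:psi_ct}, whence \beqref{eq:psi_ct,c<1} and both monotonicity statements follow as in the first paragraph. For the strict statements under uniform convexity I would use that $\psi_{b,S}(s)>0$ for $s\in(0,\diam(S))$ (Remark \bref{item:diam(S_1,S_2)}): for $0<s<t$ in that range, $\psi_{b,S}(t)\ge(t/s)^2\psi_{b,S}(s)>\psi_{b,S}(s)$ gives strict monotonicity of $\psi_{b,S}$, and the same computation gives it for $\phi$ whenever the inequality in \beqref{eq:psi_ct} is strict. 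Finally, the last two assertions are immediate corollaries: fixing $t_0\in(0,\diam(S))$, \beqref{eq:psi_ct,c<1} gives $\psi_{b,S}(c)\le(\psi_{b,S}(t_0)/t_0^2)\,c^2$ for small $c$ (so a non-singleton $S$ yields quadratic decay), while if $S$ is unbounded and $\psi_{b,S}(t_0)>0$, then \beqref{eq:psi_ct} gives $\psi_{b,S}(r)\ge(\psi_{b,S}(t_0)/t_0^2)\,r^2$ for large $r$, i.e.\ quadratic growth.
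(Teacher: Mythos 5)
Your proposal is correct, and the key step is argued by a genuinely different mechanism than the paper's. Both proofs share the same skeleton: establish $\psi_{b,S}(ct)\geq c^2\psi_{b,S}(t)$ first for $c\in(1,2)$ (where the restriction $1/c>1/2$ is what makes the argument close), and then iterate a fractional-power factorization of a general $c>1$; and both derive every remaining assertion from \beqref{eq:psi_ct} exactly as you do. The difference is in the core. The paper fixes a near-optimal pair $(x,y,\lambda)$ for $\psi_{b,S}(ct)$ up to $\epsilon$, introduces the single intermediate point $y_c=(1/c)x+(1-1/c)y$, applies the defining inequality twice (once on $[x,y]$ at parameter $1/c$, once on $[y_c,y]$ at parameter $c\lambda$ after reducing to $\lambda\leq 1/2$), and lets $\epsilon\to 0$. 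You instead prove the pointwise bound $\delta(x,y,\lambda)\geq c^2\lambda(1-\lambda)\psi_{b,S}(t)$ for \emph{every} admissible $(x,y,\lambda)$ by comparing the convexity defect $D$ along the whole segment with the explicit quadratic $Q(r)=c^2\psi_{b,S}(t)r(1-r)$, which satisfies the scale-$\eta$ interpolation identity exactly; the overlap of $[0,\eta]$ and $[1-\eta,1]$ for $\eta=1/c>1/2$ then yields $F(\eta)\geq\beta^2F(\eta)$ with $\beta<1$, forcing $F=D-Q\geq 0$. (I checked the identity for $Q$ and the covering of $(0,1)$; both are right, and finiteness of $D$ and of $\psi_{b,S}(t)$ on realized distances makes the bookkeeping sound.) What your route buys is the elimination of the $\epsilon$-approximation and of the WLOG on $\lambda$, since you bound the defect from below for all pairs and only then take the infimum; what it costs is the setup of the defect/comparison formalism. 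One minor remark: for the parenthetical strict-monotonicity claim about $\phi$ you, like the paper, do not actually verify that \beqref{eq:psi_ct} is strict under uniform convexity, so on that point your treatment is exactly as (in)complete as the original.
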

\begin{proof}
We start by proving \beqref{eq:psi_ct}. When $c=1$, then \beqref{eq:psi_ct} is trivial. When $t=0$, then \beqref{eq:psi_ct} holds because of Remark \bref{rem:TypesOfConvexity}\beqref{item:psi(0)=0}. Now assume that $t>0$ and $c\in (1,2)$. This special case will be used later for proving the more general case in which $c$ can be  arbitrary in $(1,\infty)$. If $\psi_{b,S}(ct)=\infty$, then \beqref{eq:psi_ct} is satisfied. Consider now the case $\psi_{b,S}(ct)<\infty$ and fix an arbitrary $\epsilon\in (0,\infty)$. It follows  from \beqref{eq:Relative_psi} that there exist $x,y\in S$ and $\lambda\in (0,1)$ such that $\|x-y\|=ct$ and 
\begin{equation}\label{eq:psi_epsilon}
\frac{\lambda b(x)+(1-\lambda)b(y)-b(\lambda x+(1-\lambda)y)}{\lambda(1-\lambda)}<\epsilon+\psi_{b,S}(ct).
\end{equation}
The left-hand side of \beqref{eq:psi_epsilon} is well defined because $b$ is assumed to be finite on $S$.  
It can be assumed that $\lambda\in (0,0.5]$, because if $\lambda\in (0.5,1)$, then we let $\lambda':=1-\lambda$, $x':=y$, $y':=x$, and an immediate verification shows that  \beqref{eq:psi_epsilon} holds with $\lambda'$, $x'$, $y'$ instead of $\lambda$, $x$, $y$  respectively. 

Let $y_{\lambda}:=\lambda x+(1-\lambda)y$, $y_c:=(1/c)x+(1-(1/c))y$. Then $\|y_c-y\|=t$ and $y_{\lambda}=c\lambda y_c+(1-c\lambda)y$. Since $c\in(1,2)$ and $\lambda\in (0,0.5]$, we have $c\lambda\in (0,1)$ and $1/c\in (0,1)$. Therefore the convexity of $S$ implies that $y_c$ and $y_{\lambda}$ belong to $S$. We conclude from \beqref{eq:Relative_psi} that $\psi_{b,S}(t)<\infty$ and also that  
\begin{equation}\label{eq:b(y_c)}
b(y_c)\leq \frac{1}{c}\cdot b(x)+\left(1-\frac{1}{c}\right)\cdot b(y)-\frac{1}{c}\left(1-\frac{1}{c}\right)\psi_{b,S}(ct),
\end{equation}
\begin{equation}\label{eq:b(y_lambda)}
b(y_{\lambda})\leq c\lambda b(y_c)+(1-c\lambda)b(y)-c\lambda(1-c\lambda)\psi_{b,S}(t).
\end{equation}
Now we start with \beqref{eq:b(y_lambda)}, bound from above its right-hand side  (first, by bounding from above $c\lambda b(y_{\lambda})$ by the right-hand side of \beqref{eq:b(y_c)} after multiplying \beqref{eq:b(y_c)} by $c\lambda$, second, by taking into account \beqref{eq:psi_epsilon} and performing elementary algebraic manipulations, and third, by recalling that $y_{\lambda}=\lambda x+(1-\lambda)y$). By additional simple calculations (which, along the way, lead to the cancellation of $b(y_{\lambda})$ on both sides of the inequality)  we arrive at 
\begin{equation}\label{eq:c^2psi<epsilon+}
c^2\psi_{b,S}(t)\leq \psi_{b,S}(ct)+\frac{c(1-\lambda)\epsilon}{1-c\lambda}
<\psi_{b,S}(ct)+\frac{2c\epsilon}{2-c},
\end{equation}
where the second inequality in \beqref{eq:c^2psi<epsilon+} follows from the assumptions that $\psi_{b,S}(ct)<\infty$ and that $\lambda\in(0,0.5]$. Since \beqref{eq:c^2psi<epsilon+} holds for arbitrary small $\epsilon>0$, it follows that \beqref{eq:psi_ct} holds for every $t>0$ and $c\in (1,2)$. Since $tc\in (0,\infty)$ we conclude from  \beqref{eq:psi_ct}  that $\psi_{b,S}(c^2t)=\psi_{b,S}(c(ct))\geq c^2\psi_{b,S}(ct)\geq c^4\psi_{b,S}(t)$. By induction we have $\psi_{b,S}(c^kt)\geq c^{2k}\psi_{b,S}(t)$ for each $k\in\N$, $t>0$ and $c\in (1,2)$. Now fix $c\in (1,\infty)$. Since $c>1$ and  $\lim_{k\to\infty}\sqrt[k]{c}=1$, there exists $k\in\N$ large enough such that $\sigma:=\sqrt[k]{c}\in (1,2)$. From the previous lines we know that $\psi_{b,S}(\sigma^kt)\geq \sigma^{2k}\psi_{b,S}(t)$. This and $\sigma^k=c$ imply that \beqref{eq:psi_ct} holds.  

To show that $\phi(t):=\psi_{b,S}(t)/t^2$ is increasing on $(0,\infty)$ we fix $0<t_1<t_2<\infty$. Then $t_2=ct_1$ for $c:=t_2/t_1>1$. From \beqref{eq:psi_ct} we conclude that $\phi(t_2)\geq \phi(t_1)$ (and strict inequality holds if $b$ is uniformly convex and $t_1,t_2\in (0,\diam(K))$), as required. This inequality and the equality $\psi_{b,S}(t)=t^2\phi(t)$, $t\in (0,\infty)$ show that $\psi_{b,S}$ is monotone increasing  on $[0,\infty)$ (and it is strictly increasing on $[0,\diam(K))$ if $b$ is uniformly convex) because on  $(0,\infty)$ it is a product of two increasing and nonnegative functions and at $t=0$ it vanishes, as explained in  Remark \bref{rem:TypesOfConvexity}\beqref{item:psi(0)=0}. It remains to prove \beqref{eq:psi_ct,c<1}. Fix $c\in (0,1)$ and $t\geq 0$. Denote $s:=1/c\in (1,\infty)$. From \beqref{eq:psi_ct} we have $\psi_{b,S}(t)=\psi_{b,S}(sct)\geq s^2\psi_{b,S}(ct)$. Thus $c^2\psi_{b,S}(t)=(1/s^2)\psi_{b,S}(t)\geq \psi_{b,S}(ct)$, as claimed. 

As for the decaying property of $\psi_{b,S}$ when $S$ is not a singleton ($S$ may be bounded or unbounded), fix some positive $t_0$ which is smaller than $\diam(S)$. This is possible since $\diam(S)\in (0,\infty]$. We have $\psi_{b,S}(t_0)\in [0,\infty)$  according to Remark \bref{rem:TypesOfConvexity}\beqref{item:diam(S_1,S_2)}. Given an arbitrary $s\in (0,t_0)$, we can write $s=ct_0$ for some $c\in (0,1)$. From \beqref{eq:psi_ct,c<1} we have $\psi_{b,S}(s)=\psi_{b,S}(ct_0)\leq c^2\psi_{b,S}(t_0)=(\psi_{b,S}(t_0)/t_0^2)s^2$, that is, $\psi_{b,S}(s)$ decays to 0 at least as fast as a quadratic function in $s$, as required. Finally, we need to consider the growth property of $\psi_{b,S}$ when $S$ is unbounded and $\psi_{b,S}(t_0)>0$ for some $t_0>0$. Since $\diam(S)=\infty$, it follows that $\psi_{b,S}(t_0)$ is finite (Remark \bref{rem:TypesOfConvexity}\beqref{item:diam(S_1,S_2)}), and from \beqref{eq:psi_ct} we have $\psi_{b,S}(c)=\psi_{b,S}(t_0\cdot(c/t_0))\geq (\psi_{b,S}(t_0)/t_0^2)c^2\to\infty$ when $t_0<c\to\infty$, as claimed. 
\end{proof}

\section{Bregman functions and divergences}\label{sec:Bregman}
In this section we discuss the central concepts of a Bregman function and a Bregman divergence and establish various properties related to them. Full proofs of all of the assertions are given for the sake of completeness even in cases when versions of some assertions are known in certain settings. For the reader's  convenience, the section is partitioned into subsections. 

\subsection{Definitions and related remarks}
We start with the following definition. 
\begin{defin}\label{def:preBregman}
Let $(X,\|\cdot\|)$ be a real normed space. Let  $b:X\to(-\infty,\infty]$ and suppose that $\Int(\dom(b))\neq\emptyset$ and also that $b$ is G\^ateaux differentiable in $\Int(\dom(b))$.  Define a function $B:X^2\to(-\infty,\infty]$ by 
\begin{equation}\label{eq:BregmanDistance}
B(x,y):=\left\{\begin{array}{lll}
b(x)-b(y)-\langle b'(y),x-y\rangle, & \forall (x,y)\in \dom(b)\times \Int(\dom(b)),\\
\infty & \textnormal{otherwise}.
\end{array}
\right.
\end{equation} 
Then $B$  is called a pre-Bregman  divergence (pre-Bregman distance, pre-Bregman measure) induced by the pre-Bregman function $b$. The set $\Int(\dom(b))$ is called the zone of $b$.
\end{defin}

A pre-Bregman function is a function which is a candidate to be a Bregman function and a  pre-Bregman divergence is a function which is a candidate to be a Bregman divergence. The exact  definitions of a Bregman function and a Bregman divergence is given below. 
\begin{defin}\label{def:BregmanDiv}
Let $(X,\|\cdot\|)$ be a real normed space and 
let $b:X\to(-\infty,\infty]$. Let $\emptyset\neq U\subseteq X$. Then $b$ is called a {\bf Bregman function}  with respect to $U$ ({\bf the zone of $b$}) and $B:X^2\to(-\infty,\infty]$ is called the {\bf Bregman divergence} (or the Bregman distance, or the Bregman measure of distance) associated with $b$ if all of  the following conditions  hold:
\begin{enumerate}[(i)]
\item\label{BregmanDef:U}  $U=\Int(\dom(b))$ (in particular, $\Int(\dom(b))\neq \emptyset$) and $b$ is G\^ateaux differentiable in $U$. 
\item\label{BregmanDef:ConvexLSC} $b$ is convex and lower semicontinuous on $X$ and strictly convex on $\dom(b)$.
\item\label{BregmanDef:B} $B$ is defined by \beqref{eq:BregmanDistance}.
\item\label{BregmanDef:BoundedLevelSet} For each $\gamma\in \R$ and  each $x\in \dom(b)$, the 
level-set $L_1(x,\gamma):=\{y\in U: B(x,y)\leq \gamma\}$ is  bounded. 
\item\label{BregmanDef:|x-y_i|=0==>B(x,y_i)=0} 
Let $x\in\dom(b)$ and let $(y_i)_{i=1}^{\infty}$ be a given  sequence in $U$. If  $\lim_{i\to\infty}\|x-y_i\|=0$, then  $\lim_{i\to\infty}B(x,y_i)=0$. 
\item\label{BregmanDef:B(x_i,y_i)=0==>|x_i-y|=0} 
Let $(x_i)_{i=1}^{\infty}$ be a given sequence in $\dom(b)$ and $(y_i)_{i=1}^{\infty}$ be a given  sequence in $U$. If $(x_i)_{i=1}^{\infty}$ is bounded, $\lim_{i\to\infty}B(x_i,y_i)=0$,  and there exists $y\in \dom(b)$ such that $\lim_{i\to\infty}\|y_i-y\|=0$, then $\lim_{i\to\infty}\|x_i-y\|=0$. 

\end{enumerate} 
We say that $b$ has the {\bf limiting difference property} if for each $x\in \dom(b)$ and each sequence $(y_i)_{i=1}^{\infty}$ in $U$, if $(y_i)_{i=1}^{\infty}$ converges weakly to some $y\in U$, then $B(x,y)=\lim_{i\to\infty}(B(x,y_i)-B(y,y_i))$. We say that $b$ is {\bf sequentially consistent} if  for all sequences $(x_i)_{i=1}^{\infty}$ in $\dom(b)$ and $(y_i)_{i=1}^{\infty}$ in $\Int(\dom(b))$, if $(y_i)_{i=1}^{\infty}$ is bounded and one has $\lim_{i\to\infty}B(x_i,y_i)=0$, then $\lim_{i\to\infty}\|x_i-y_i\|=0$. 
\end{defin}
Here are a few remarks concerning  Definitions \bref{def:preBregman} and \bref{def:BregmanDiv}.

\begin{remark}\label{rem:BregmanNotMetric}
Of course, the Bregman divergence $B$ (which is always finite on $\dom(b)\times U$) is in general not a metric:  for example, it is not symmetric and does not necessary satisfy the triangle inequality. However, it can be seen from Definition \bref{def:BregmanDiv}  that $B$ still enjoys various properties which make it a substitute for a measure of distance. Additional relevant properties can be found in Proposition  \bref{prop:BregmanProperties} below. These properties are useful in various scenarios, as will be explained  in Remark \bref{rem:BregmanDivDifferentProp} below. 

In the above-mentioned connection, it is worth mentioning that $B$, or at least particular instances of it induced by special pre-Bregman functions $b$, satisfies certain useful analytical relations (such as equations or inequalities) which have geometrical interpretations. Among these relations are the ``three-point identity'' of Chen and Teboulle \cite[Lemma 3.1]{ChenTeboulle1993jour},  and the ``three-point   property'' \cite[p. 212, Inequality  (2.11)]{CsiszarTusnady1984jour}, the ``four-point property'' \cite[p. 212, Inequality  (2.12)]{CsiszarTusnady1984jour} and the ``five-point property'' \cite[p. 207, Inequality  (1.4)]{CsiszarTusnady1984jour} of Csisz\'ar and  Tusn\'ady. For instance, if $X$ is a Hilbert space and for some $\gamma>0$  we have $b(x):=\gamma\|x\|^2$, $x\in X$ (in this case $B(x,y)=\gamma\|x-y\|^2$ for all $x,y\in X$), then the ``three-point identity'' is nothing but a restatement of the cosine rule from trigonometry, and a special case of the ``three-point property'' (which was also observed in \cite[Lemma 1]{Bregman1967jour}) is nothing but a well-known inequality which follows from the cosine rule and the obtuseness of the angle between two vectors: the vector obtained from a point and its orthogonal projection onto a nonempty, closed and convex subset, and the vector obtained from an arbitrary point in that subset and the above-mentioned projection. 
\end{remark}

\begin{remark}
We occasionally refer to (arbitrary) concrete examples of Bregman functions as ``entropies'', since some of the well-known examples of Bregman functions, or their negatives, are traditionally called ``entropies'' (a typical example is the negative Boltzmann-Gibbs-Shannon entropy). Hence, a general term for all of these   functions might be ``Bregman entropies'', but we do not use this phrase.   
\end{remark}

\begin{remark}\label{rem:DefBregman}
 Definitions \bref{def:preBregman} and \bref{def:BregmanDiv} are modeled after, and slightly extend \cite[Definition 3.1, Definition 4.1]{BauschkeBorwein1997jour}, which by themselves extend the classical finite-dimensional definition  of the Bregman function \cite[pp. 153--154]{CensorDePierroIusem1991jour}, \cite[Definition 2.1]{CensorLent1981jour}, \cite[Definition 2.1]{CensorReich1996jour},  \cite[Definition 2.1]{DePierroIusem1986jour}. The notion of a Bregman function which has the limiting difference property appears in a somewhat implicit form in previous works, for instance in \cite[Definition 2.4(5), Example 2.5]{Reem2012incol} and \cite{Reich1996incol}. The notion of Bregman functions which are sequentially consistent appears in various  forms, for instance in \cite[Condition 4.3(ii)]{BauschkeCombettes2003jour}, \cite[p. 249]{ButnariuByrneCensor2003jour}, \cite[Lemma 2.1.2, p. 67]{ButnariuIusem2000book}, and \cite[p. 50]{ButnariuIusemZalinescu2003jour}.  

 In the classical definition $\dom(b)$ should be closed and $b$ should be continuous on it, and also $b'$ should be continuous on $U$. In Definition \bref{def:BregmanDiv} we require less from $b$. We let $b$ to attain the value $\infty$ because, among other things, this allows us to treat in a unified way classical Bregman functions such as the normalized energy function $b=\frac{1}{2}\|x\|^2$, $x\in\R^n$, and the negative Boltzmann-Gibbs-Shannon entropy $b(x)=\sum_{k=1}^n x_k\log(x_k)$, $x=(x_k)_{k=1}^n\in[0,\infty)^n$, as well as another classical entropy (the negative Burg entropy: $b(x)=-\sum_{k=1}^n \log(x_k)$, $x=(x_k)_{k=1}^n\in (0,\infty)^n$), which is not a Bregman function according to the classical definition \cite[Definition 2.1]{CensorLent1981jour} because $\dom(b)$ is not closed and $b$ cannot be extended to a finite and continuous  function (convex or not) defined on $\overline{\dom(b)}=[0,\infty)^n$. 

When constructing Bregman functions, we will usually start with a nonempty, open and convex subset $U\subseteq X$, define $b$ there, and then we will extend it to $\cl{U}$, either in a continuous way (as in the case of the negative Boltzmann-Gibbs-Shannon entropy and the negative Havrda-Charv\'at-Tsallis entropy, which are discussed in Sections \bref{sec:GibbsShannon} and \bref{sec:HavrdaCharvatTsallis}, respectively) or by assigning it the value $\infty$ on $\cl{U}\backslash U$ (as in the case of the negative Burg entropy  which is discussed in Section \bref{sec:Burg}). The values of $b$ outside $\cl{U}$ will not be very important to us, but in order to make sure that $b$ is be lower semicontinuous on $X$ (as will be needed in most of the cases discussed in Proposition \bref{prop:BregmanProperties}), we will define $b(x):=\infty$ for $x\notin \cl{U}$. 
\end{remark}

\begin{remark}\label{rem:LevelSet}
In Definition \bref{def:BregmanDiv}\beqref{BregmanDef:BoundedLevelSet} it is sufficient to require that $\gamma\in (0,\infty)$ since, as shown in Proposition \bref{prop:BregmanProperties}\beqref{BregProp:B=0} below, $B(x,y)\geq 0$ for all $x, y\in X$. 
If $b$ is a classical Bregman function and $X$ is finite-dimensional (that is, $b$ satisfies Definition \bref{def:BregmanDiv}, where here one requires that also $\dom(b)$ is closed, that $b'$ is continuous on $U$, and that $b$ is continuous on $\dom(b)$), then it is also required that the second type level-set $L_2(y,\gamma):=\{x\in \dom(b): B(x,y)\leq \gamma\}$  be bounded for each $\gamma\in \R$ (or, in fact, for each $\gamma\in (0,\infty)$) and $y\in U$. However, it is well known that this requirement is redundant: see, for instance, \cite[Remarks 4.2]{BauschkeBorwein1997jour} or \cite[Theorem 3.2]{ButnariuByrneCensor2003jour}.  In our paper we are less interested in the boundedness of  the second type level-sets, but nevertheless we present several sufficient conditions for this requirement to be fulfilled: see Proposition \bref{prop:BregmanProperties}\beqref
{BregProp:LevelSetBounded},\beqref{BregProp:LevelSet2BoundedSufficientConditions} below. As a matter of fact, in all of the concrete examples of Bregman functions that we present in later sections the second type level-sets are bounded. A different sufficient condition  can be found in \cite[Theorem 3.7(iii)]{BauschkeBorwein1997jour}: it says that the second type level-sets are bounded whenever $X$ is finite-dimensional and $b$ is a lower semicontinuous convex pre-Bregman function which is essentially strictly convex. A generalization of this condition to reflexive Banach spaces can be found in \cite[Lemma 7.3(v)]{BauschkeBorweinCombettes2001jour}. 
\end{remark}

\begin{remark}\label{rem:ContinuityPreBregman}
If $X$ is a Banach space, then any lower semicontinuous and convex pre-Bregman function $b$ is automatically continuous on its zone $U$. Indeed, since $U$, namely, the interior of its effective domain, is nonempty, and since $X$ is Banach, $b$ is continuous on $U$ according to \cite[Proposition 3.3, p. 39]{Phelps1993book_prep}.
\end{remark}

\begin{remark}
In the infinite-dimensional theory of Bregman functions and divergences, such as in \cite{AlberButnariu1997jour,BauschkeBorweinCombettes2001jour,BauschkeBorweinCombettes2003jour,ButnariuIusem2000book,Reich1996incol,ReichSabach2010b-jour} the treatment of $B$ and $b$ is usually not axiomatic as in the finite-dimensional case or as in Definition \bref{def:preBregman}. Indeed, while $B$ is defined as in \beqref{eq:BregmanDistance} (or slight modifications of \beqref{eq:BregmanDistance}, as in \cite[p. 3]{ButnariuIusem2000book}, where the gradient of $b$ is replaced by a one-sided directional derivative), $b$ is assumed to be not only what we called a pre-Bregman function  (Definition \bref{def:preBregman}), but rather a special function: for instance, a uniformly convex and Fr\'echet differentiable function, or a Legendre function that satisfies additional concrete assumptions. However, as far as we know, with the exception of \cite[p. 65]{ButnariuIusem2000book} and  \cite{Reem2012incol}, no general axioms such as the ones given in Definition \bref{def:preBregman} have been  imposed. In the first case $b$ is assumed to be totally convex on $U$ and to satisfy Definition \bref{def:BregmanDiv}\beqref{BregmanDef:BoundedLevelSet} for all $x\in U$, and the theory regarding the divergence $B(x,y)$ is developed mainly to points $x,y\in U$. In the second case the goal is to develop a theory of Bregman distances without Bregman functions, and so the axiomatic approach considered there concerns $B$ and not $b$. We also note that it is known that the classical definition of Bregman functions in finite-dimensional spaces involves  redundancies in the sense that some items in it imply other ones (see \cite{ButnariuByrneCensor2003jour} for a survey and a thorough analysis). 
\end{remark}

\begin{remark}\label{rem:BregmanDivDifferentProp}
A major reason behind  Definition \bref{def:BregmanDiv} is that the properties of $B$ and $b$ allow one to establish the convergence of various algorithmic schemes which aim at solving a rich class of optimization problems (constrained and unconstrained minimization, the feasibility problem, finding zeros and fixed points of nonlinear operators, etc.). Additional or slightly different properties are sometimes needed for establishing certain convergence results, and usually they are achieved by imposing additional assumptions on $b$. Definition \bref{def:BregmanDiv} suffices for most finite-dimensional  Bregman-divergence-type algorithms that we are aware of, but in order to allow more flexibility and to better address infinite-dimensional settings, we establish in Proposition \bref{prop:BregmanProperties} below and the corollaries following it a few additional useful properties of $B$. Later (Section \bref{sec:StrongConvexityWeak-to-weak*}) we present relevant  sufficient conditions which allow one to construct Bregman functions. The examples in later sections are based on these properties and conditions. 
\end{remark}

\begin{remark}\label{rem:BregmanLinear}
Given a continuous linear functional $\ell:X\to\R$, a positive number $\lambda$, and a Bregman function $f:X\to(-\infty,\infty]$ which induces a Bregman divergence $B_f$, it is easy to verify that $b:=\lambda f+\ell$ is a Bregman function with zone $\Int(\dom(f))$ and an effective domain $\dom(f)$, and that $B_{b}=\lambda B_f$.  In particular, $B_{\ell}=0$; moreover, if $f$ has the limiting difference property, then $b$ has it too, and if $f$ is sequentially consistent, then so is $b$.  In addition, given $m$ Bregman functions $b_k:X\to(-\infty,\infty]$ with associated Bregman divergences $B_k$, $k\in \{1,\ldots,m\}$, $m\in\N$, and given $m$ positive numbers $\lambda_k$, $k\in\{1,\ldots,m\}$, denote $b:=\sum_{k=1}^m \lambda_k b_k$ and assume that $\Int(\cap_{k=1}^m\dom(b_k))\neq\emptyset$. Then it is simple to check that $b$ is a Bregman function with an effective domain $\dom(b)=\cap_{k=1}^m\dom(b_k)$, zone $\Int(\dom(b))=\Int(\cap_{k=1}^m\dom(b_k))=\cap_{k=1}^m\Int(\dom(b_k))$, and an associated Bregman divergence $B=\sum_{k=1}^m \lambda_k B_k$; moreover, if $b_k$ has the limiting difference property for each $k\in\{1,\ldots,m\}$, then $b$ has this property too; in addition, if $b_k$ is sequentially consistent for some $k\in\{1,\ldots,m\}$, then $b$ is sequentially consistent (here we also use Proposition \bref{prop:BregmanProperties}\beqref{BregProp:B=0} below), and if  for some $k\in\{1,\ldots,m\}$ all the second type level-sets of $B_k$ are bounded, then all  the second type level-sets of $B$ are bounded. These simple observations can be useful in some scenarios, as is illustrated in Subsections \bref{subsec:BetaEntropy}--\bref{subsec:alpha-beta} below. 
\end{remark}

\begin{remark}\label{rem:BregmanTranslation}
A simple verification shows that if $b:X\to(-\infty,\infty]$ is a Bregman function with zone $U$ and 	 associated Bregman divergence $B$, and if $z_0\in X$ is given, then the function $\tilde{b}:X\to(-\infty,\infty]$, which is defined by $\tilde{b}(\tilde{x}):=b(\tilde{x}+z_0)$ for each $\tilde{x}\in X$, is a Bregman function with a zone $\tilde{U}:=U-z_0$. The associated Bregman divergence $\tilde{B}$ of $\tilde{b}$ satisfies $\tilde{B}(\tilde{x},\tilde{y})=B(\tilde{x}+z_0,\tilde{y}+z_0)$ for each $(\tilde{x},\tilde{y})\in X^2$. Moreover, $b$ has the limiting difference property if and only if $\tilde{b}$ has this property, and $b$ is sequentially consistent if and only if $\tilde{b}$ is sequentially consistent.  
\end{remark}

\subsection{Pre-Bregman functions: Sufficient conditions}
In this subsection  we establish many sufficient conditions which ensure that the considered pre-Bregman  functions satisfy parts or all of Definition \bref{def:BregmanDiv}. We start with a lemma, which is probably known, and which is used in the proof of Proposition \bref{prop:BregmanProperties}\beqref{BregProp:B(x,x_i)-B(x,y_i)=0} below (we note that the convexity  assumption on the set $S$ which appears in Lemma \bref{lem:UniformContinuityConvexSet} is crucial: a simple counterexample is to take  $X=\ell_2$, $Y=\R$, $d=|\cdot|$, $S=\{e_k: k\in\N\}$, where $e_k$ is the $k$-th element in the canonical basis of $X$, and $f:S\to Y$, $f(e_k):=k$ for all $k\in\N$.). 
\begin{lem}\label{lem:UniformContinuityConvexSet}
Suppose that  $S$ is a nonempty, bounded and convex subset of a normed space $(X,\|\cdot\|)$ and let $(Y,d)$ be a metric space. If $f:S\to Y$ is uniformly continuous on $S$, then $f$ is bounded on $S$. 
\end{lem}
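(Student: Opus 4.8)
The plan is to use the standard characterization of uniform continuity via a modulus: since $f$ is uniformly continuous on $S$, for the particular choice $\epsilon=1$ there exists $\delta>0$ such that $d(f(u),f(v))<1$ whenever $u,v\in S$ satisfy $\|u-v\|<\delta$. The idea is then to exploit the convexity and boundedness of $S$ to cover any pair of points in $S$ by a bounded number of small steps of length less than $\delta$, and to control $d(f(x),f(y))$ by the triangle inequality in $(Y,d)$.

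First I would fix a reference point $x_0\in S$ (possible since $S\neq\emptyset$) and show that $f$ is bounded by bounding $d(f(x),f(x_0))$ uniformly in $x\in S$. Given an arbitrary $x\in S$, consider the line segment $[x_0,x]$, which is contained in $S$ because $S$ is convex. Let $r:=\diam(S)=\sup\{\|u-v\|:u,v\in S\}$, which is finite since $S$ is bounded, so in particular $\|x-x_0\|\leq r$. Choose an integer $N$ with $N>r/\delta$ (independent of $x$), and partition $[x_0,x]$ into $N$ equal subintervals by the points $z_j:=x_0+\tfrac{j}{N}(x-x_0)$ for $j=0,1,\ldots,N$, so that $z_0=x_0$, $z_N=x$, and each $z_j\in S$ by convexity. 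Then $\|z_{j}-z_{j-1}\|=\tfrac{1}{N}\|x-x_0\|\leq r/N<\delta$ for every $j$, so $d(f(z_j),f(z_{j-1}))<1$.

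Applying the triangle inequality along this chain gives
\begin{equation}
d(f(x),f(x_0))\leq \sum_{j=1}^{N}d(f(z_j),f(z_{j-1}))<N.
\end{equation}
Since $N$ depends only on $r$ and $\delta$, and not on the chosen point $x$, this bound is uniform over all $x\in S$. Hence $f(S)$ is contained in the ball of radius $N$ about $f(x_0)$ in $(Y,d)$, which is precisely the statement that $f$ is bounded on $S$.

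The main point to get right, rather than a genuine obstacle, is the uniformity of the partition: one must choose the single integer $N$ before fixing $x$, using the uniform bound $\|x-x_0\|\leq r$ supplied by boundedness of $S$, so that the same $N$ works simultaneously for all points. This is exactly where both hypotheses on $S$ are used, convexity to keep the chain $(z_j)$ inside $S$ and boundedness to cap the number of steps. The counterexample mentioned in the paper preceding the lemma, with $S=\{e_k:k\in\N\}$ non-convex, shows that without convexity one cannot connect distinct points by short steps within $S$ and the chaining argument fails.
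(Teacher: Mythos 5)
Your proof is correct and follows essentially the same chaining argument as the paper: connect an arbitrary point to a fixed reference point by a chain of points along the segment (which lies in $S$ by convexity), with each step shorter than the uniform-continuity $\delta$ and the number of steps bounded uniformly via the boundedness of $S$. The only cosmetic difference is that you subdivide each segment into a fixed number $N$ of equal pieces, while the paper takes steps of fixed length $\delta/2$ and bounds their count afterwards; both yield the same uniform bound.
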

\begin{proof}
The uniform continuity of $f$ on $S$ implies that for an arbitrary fixed $\epsilon>0$ (say, $\epsilon:=1$)  there exists $\delta>0$ such that for every $u,v\in S$ satisfying  $\|u-v\|<\delta$ we have $d(f(u),f(v))<\epsilon$.  Now fix $z\in S$. The boundedness of $S$ implies that there exists $M>0$ such that $\|u-z\|\leq M$ for each $u\in S$. Given $u\in S$, $u\neq z$ (we can assume that $S\neq \{z\}$, otherwise the assertion is obvious and the proof is complete), let $m$ be the maximal integer which is smaller than $\|u-z\|/(0.5\delta)$. Then $0\leq m<\|u-z\|/(0.5\delta)\leq m+1$ and thus $|\|u-z\|-0.5m\delta|\leq 0.5\delta$. For each nonnegative integer $k\in \{0,\ldots,m\}$ let $p_k$ be the point defined by $p_k:=u+0.5k\delta(z-u)/\|z-u\|$ and let $p_{m+1}:=z$. Then $\|p_{k}-p_{k+1}\|=0.5\delta$ whenever $k\in\{0,\ldots,m-1\}$ and 
\begin{multline*}
\|p_m-p_{m+1}\|=\left\|\left(u+0.5m\delta\frac{(z-u)}{\|z-u\|}\right)-\left(u+\|z-u\|\frac{(z-u)}{\|z-u\|}\right)\right\|\\
=|0.5m\delta-\|z-u\||\leq 0.5\delta.
\end{multline*} 
Therefore $\|p_k-p_{k+1}\|<\delta$ for each $k\in \{0,1,\ldots,m\}$. Since the convexity of $S$ implies that $p_k\in [u,z]\subseteq S$ for each $k\in \{0,1,\ldots,m\}$, it follows from the triangle inequality, and the choice of $\delta$ and $M$, that 
\begin{equation*} 
d(f(u),f(z))\leq \sum_{k=0}^{m} d(f(p_k),f(p_{k+1}))<(m+1)\epsilon<\left(\frac{\|u-z\|}{0.5\delta}+1\right)\epsilon\leq\left(\frac{M}{0.5\delta}+1\right)\epsilon. 
\end{equation*}
This inequality obviously holds also when $u=z$, and we conclude from the triangle inequality that $d(f(u),f(v))\leq d(f(u),f(z))+d(f(z),f(v))<((1/\delta)4M+2)\epsilon$ for every $u,v\in S$. Hence $f$ is bounded on $S$, as claimed. 
\end{proof} 

The following proposition, which is the main result of this section, describes many properties of Bregman and pre-Bregman functions and divergences, especially their asymptotic behavior under various conditions. It also describes some asymptotic properties (such as strong convergence) of relevant sequences.  Some of these properties are not needed for a pre-Bregman function to be a Bregman function according to Definition \bref{def:BregmanDiv}, but they are still useful in some circumstances, for example, for establishing the convergence of certain algorithmic sequences to the solutions of various optimization problems. For instance, the property of $B$ mentioned in Proposition  \bref{prop:BregmanProperties}\beqref{BregProp:B(x,x_i)-B(x,y_i)=0} is needed in \cite{Reich1996incol} for establishing the convergence of an infinite product of certain operators to a common asymptotic fixed point of them, and the sequential consistency of $B$ mentioned in  Proposition  \bref{prop:BregmanProperties}\beqref{BregProp:B(x_i,y_i)=0AndOneSequenceBounded==>|x_i-y_i|=0}  is useful in the convergence analysis of various algorithms, such as the one discussed in \cite[Section 4]{BauschkeCombettes2003jour} (in \cite[Condition 4.3(ii)]{BauschkeCombettes2003jour} a stronger version is assumed in which both $(x_i)_{i=1}^{\infty}$ and $(y_i)_{i=1}^{\infty}$ belong to $U$).

It is worth noting that versions of some of the properties mentioned in Proposition \bref{prop:BregmanProperties} below are known in some settings. For example, the claim that $B(x,y)>0$ if $x\in\dom(b)$, $y\in \Int(\dom(b))$, $x\neq y$  (Proposition  \bref{prop:BregmanProperties}\beqref{BregProp:B=0}), is well known if the space is finite-dimensional and $b$ is a classical Bregman function (see Remark \bref{rem:DefBregman}  above for a discussion on classical Bregman functions). However, in the infinite-dimensional case it seems that our result, in which we assume less, is new, and it actually generalizes \cite[Lemma 7.3(vi)]{BauschkeBorweinCombettes2001jour} if $b$ is also assumed to be G\^ateaux differentiable at $y$ (there $b$ is assumed to be essentially strictly convex but instead of using $B$ from \beqref{eq:BregmanDistance}, a slightly more general version of $B$, based on directional derivatives, is used in \cite[Lemma 7.3(vi)]{BauschkeBorweinCombettes2001jour}; for a closely related result, in which both $x$ and $y$ are assumed to belong to the algebraic interior of $\dom(b)$ and $b$ is assumed to be strictly convex on the algebraic interior of $\dom(b)$, see \cite[Proposition 1.1.4, p. 4]{ButnariuIusem2000book}). A second  example: Proposition \bref{prop:BregmanProperties}\beqref{BregProp:D(x,y_i)=0U)} generalizes \cite[Proposition 3.2(ii)]{BauschkeBorwein1997jour}. A third example: a variant of Proposition \bref{prop:BregmanProperties}\beqref{BregProp:BregPsiS1S2} is known when $b$ is assumed to be uniformly convex \cite[Theorem 3.10(i)-(iv), p. 215]{Zalinescu2002book} or strongly convex \cite[Corollary 3.11(i)-(iv), pp. 217--218; Remark 3.5.3, p. 218]{Zalinescu2002book}. 

We also note that some parts of Proposition \bref{prop:BregmanProperties} may seem, at first glance, very similar or even identical (for example, Parts \beqref{BregProp:Bounded|x_i-y_i|=0==>B(x_i,y_i)=0}--\beqref{BregProp:Continuous_b|x_i-y_i|=0==>B(x_i,y_i)=0}). However, this is not the case, because each time there are differences in the assumptions and also in the stated results, and these differences require modifications (sometimes significant ones) in the corresponding proofs.

\begin{prop}\label{prop:BregmanProperties}
Let $(X,\|\cdot\|)$ be a real normed space. Suppose that $b:X\to(-\infty,\infty]$ is convex,  lower semicontinuous, and that $U:=\Int(\dom(b))$ is nonempty. Assume also that $b$ is G\^ateaux differentiable in $U$.  Denote by $B:X^2\to (-\infty,\infty]$ the pre-Bregman divergence defined in \beqref{eq:BregmanDistance}. Let $S_1$ and $S_2$ be nonempty subsets of $\dom(b)$. Then the following properties hold:
\begin{enumerate}[(I)]
\item\label{BregProp:BregPsiS1S2}  Given $x\in\dom(b)$, $y\in U$ and a relative gauge $\psi$ of $b$ on $(\{x\},\{y\})$ (the modulus of uniform convexity of $b$ relative to $(\{x\},\{y\})$ or any other relative gauge), we have $\psi(\|x-y\|)\leq B(x,y)$. In particular, if $S_2\cap U\neq\emptyset$ and if $b$ is uniformly convex relative to $(S_1, S_2\cap U)$ with some relative gauge $\psi$, then $\psi(\|x-y\|)\leq B(x,y)$ for each $(x,y)\in S_1\times (S_2\cap U)$. Specifying even further, if $S_2\cap U\neq\emptyset$ and if $b$ is uniformly convex relative to $(S_1, S_2\cap U)$ with some relative gauge $\psi$ which is assumed to be strictly increasing and continuous on some interval $J\subseteq [0,\infty)$ having the property that $\psi(J)$ contains $B(x,y)$ for each $(x,y)\in S_1\times (S_2\cap U)$, then $\|x-y\|\leq \psi^{-1}(B(x,y))$ for each $(x,y)\in S_1\times (S_2\cap U)$. 

\item\label{BregProp:Convex} For each $y\in U$, the function $x\mapsto B(x,y)$, $x\in X$, is convex and lower semicontinuous.  In addition, if $b$ is uniformly convex relative to $(S_1,S_2)$ with a gauge $\psi$, then for each $y\in U$ the function $x\mapsto B(x,y)$, $x\in X$  is uniformly convex relative to $(S_1,S_2)$ with the same gauge $\psi$. 

\item\label{BregProp:B=0} If $x=y\in U$, then $B(x,y)=0$. Furthermore, $B(x,y)\geq 0$ for each $x\in X$ and $y\in X$. If, in addition, $b$ is strictly convex on the open interval $(x,y)$ for some $x\in \dom(b)$ and  $y\in U$ satisfying $x\neq y$, then $B(x,y)>0$. In particular, if $b$ is strictly convex on $U$, and $x\in \dom(b)$ and $y\in U$ are given, then $B(x,y)=0$ if and only if $x=y$.

\item\label{BregProp:UniformlyConvexAndB(x_i,y_i)=0==>|x_i-y_i|=0} 
Assume that $b$ is uniformly convex on a nonempty and convex subset $S$ of $\dom(b)$ satisfying $S\cap U\neq\emptyset$. Let $(x_i)_{i=1}^{\infty}$ be a given sequence in $S$ and $(y_i)_{i=1}^{\infty}$ a given  sequence in $S\cap U$. If  $\lim_{i\to\infty}B(x_i,y_i)=0$, then $\lim_{i\to\infty}\|x_i-y_i\|=0$.

\item\label{BregProp:B(x_i,y_i)=0AndOneSequenceBounded==>|x_i-y_i|=0} 
Suppose that $b$ is uniformly convex on every nonempty, bounded and convex subset of $\dom(b)$. Then given two sequences $(x_i)_{i=1}^{\infty}$ in $\dom(b)$ and $(y_i)_{i=1}^{\infty}$ in $U$ satisfying $\lim_{i\to\infty}B(x_i,y_i)=0$, if one of these sequences is bounded, then the other sequence is bounded too and, moreover, $\lim_{i\to\infty}\|x_i-y_i\|=0$.  In particular, $b$ is sequentially consistent (see Definition \bref{def:BregmanDiv}). Specializing even further, under the assumption that $b$ is uniformly convex on  every nonempty, bounded and convex subset of $\dom(b)$, if  $(x_i)_{i=1}^{\infty}$ is in $\dom(b)$ and $(y_i)_{i=1}^{\infty}$ is in $U$, and one of these sequences converges strongly to some $z\in \overline{\dom(b)}$, and also $\lim_{i\to\infty}B(x_i,y_i)=0$, then the other sequence converges strongly to $z$ too.

\item\label{BregProp:x_iCompact} Suppose that $b$ is continuous on $\dom(b)$ and strictly convex on $U$ and that $b'$ is continuous on $U$. Let $(x_i)_{i=1}^{\infty}$ be a given sequence in $X$ which is contained in a compact subset of $\dom(b)$ (in particular, this condition holds when $X$ is finite-dimensional and $(x_i)_{i=1}^{\infty}$ is bounded). Let  $(y_i)_{i=1}^{\infty}$ be a given  sequence in $U$ such that $\lim_{i\to\infty}\|y_i-y\|=0$ for some $y\in U$. If $\lim_{i\to\infty}B(x_i,y_i)=0$, then $\lim_{i\to\infty}\|x_i-y\|=0$. 

\item\label{BregProp:CompactB(x_i,y_i)=0==>|x_i-y_i|=0}
Suppose that $(x_i)_{i=1}^{\infty}$ is a bounded sequence in $\dom(b)$ and $(y_i)_{i=1}^{\infty}$ is a sequence in $U$ which is contained in a compact subset of $\overline{\dom(b)}$ (in particular, this condition is satisfied if $X$ is finite-dimensional and $(y_i)_{i=1}^{\infty}$ is bounded). If $B$ is a Bregman divergence and $\lim_{i\to\infty}B(x_i,y_i)=0$, then $\lim_{i\to\infty}\|x_i-y_i\|=0$.

\item\label{BregProp:Bounded|x_i-y_i|=0==>B(x_i,y_i)=0} Assume that $b'$ is  bounded on all the subsets of $U$ which are bounded and convex. Let $(x_i)_{i=1}^{\infty}$ and $(y_i)_{i=1}^{\infty}$ be sequences of elements of $U$. If one of these sequences is bounded and $\lim_{i\to\infty}\|x_i-y_i\|=0$, then  $\lim_{i\to\infty}B(x_i,y_i)=0$. 

\item\label{BregProp:Continuous_b|x_i-y_i|=0==>B(x_i,y_i)=0} Suppose that $b$ is continuous on $\dom(b)$. Assume that $b'$ is  bounded on all bounded and convex subsets of $U$. Let $(x_i)_{i=1}^{\infty}$ be a given sequence in $\dom(b)$ and $(y_i)_{i=1}^{\infty}$ a given  sequence in $U$, where one of these sequences is bounded. If $\lim_{i\to\infty}\|x_i-y_i\|=0$, then $\lim_{i\to\infty}B(x_i,y_i)=0$. In particular, if $b$ is continuous on $\dom(b)$ and $b'$ is uniformly continuous on all bounded and convex subsets of $U$, then given a sequence $(y_i)_{i=1}^{\infty}$ in $U$ which converges to some $x\in\dom(b)$, we have $\lim_{i\to\infty}B(x,y_i)=0$.

\item\label{BregProp:D(x,y_i)=0U)} Suppose that $b$ is continuous on $U$ and that $b'$ is locally bounded at each point of $U$. If $(y_i)_{i=1}^{\infty}$ is a sequence in $U$ which satisfies $\lim_{i\to\infty}\|x-y_i\|=0$ for some $x\in U$, then $\lim_{i\to\infty}B(x,y_i)=0$. In particular, if $X$ is finite-dimensional and $b:X\to(-\infty,\infty]$ is merely assumed to be a lower semicontinuous and convex pre-Bregman function with zone $U\neq \emptyset$ (as we  assume in the formulation of Proposition \bref{prop:BregmanProperties}), then for every sequence $(y_i)_{i=1}^{\infty}$ in $U$ which converges to some $x\in U$, we have $\lim_{i\to\infty}B(x,y_i)=0$.

\item\label{BregProp:D(x,y_i)=0dom(b)} Suppose that $b$ is continuous on $\dom(b)$  and that $b'$ has the following property: for each $x\in\dom(b)$ and each sequence  $(y_i)_{i=1}^{\infty}$ in $U$ which converges in norm to $x$, the relation $\lim_{i\to\infty}\langle b'(y_i),x-y_i\rangle=0$ holds. Given a sequence $(y_i)_{i=1}^{\infty}$  in $U$, if $\lim_{i\to\infty}\|x-y_i\|=0$ for some $x\in \dom(b)$, then $\lim_{i\to\infty}B(x,y_i)=0$.

\item\label{BregProp:B(p,A)Finite} Let $S\subseteq U$ be nonempty. 
Suppose that $x\in U$ and $\sup\{\|x-y\|: y\in S\}<\infty$. If $b'$ is  bounded on all bounded and convex subsets of $U$, then $\sup\{B(x,y): y\in S\}<\infty$.  

\item\label{BregProp:B(x,S)Finite2} Suppose that $S\subseteq U$ is nonempty. Let $x\in \dom(b)$.  Assume that $\sup\{\|x-y\|: y\in S\}<\infty$ and that $b$ is continuous on $\dom(b)$. Assume also that $b'$ is  bounded on all bounded and convex subsets of $U$. Then  $\sup\{B(x,y): y\in S\}<\infty$. 

\item\label{BregProp:B(x,S)Bounded==>SisBounded} Let $\emptyset\neq S\subseteq U$ and $x\in \dom(b)$ be given. Assume that $S=S'\cup S''$ where $S'$  is bounded (and possibly empty) and $S''$ is contained in a nonempty subset $V$ of $U$ such that $b$ is uniformly convex relative to $(\{x\},V)$ with a gauge $\psi$ which  satisfies $\lim_{t\to\infty}\psi(t)=\infty$. Suppose that  $\sup\{B(x,y): y\in S\}<\infty$.  Then $S$ is bounded. Similarly, given $\emptyset\neq S\subseteq \dom(b)$ and $y\in U$, if $S=S'\cup S''$ where $S'$  is bounded (and possibly empty) and $S''$ is contained in a nonempty subset $W$ of $\dom(b)$ such that $b$ is uniformly convex relative to $(W,\{y\})$ with a gauge $\psi$ which  satisfies $\lim_{t\to\infty}\psi(t)=\infty$, and, in addition,  $\sup\{B(x,y): x\in S\}<\infty$, then $S$ is bounded.

\item\label{BregProp:LevelSetBounded} Suppose that for each $x\in\dom(b)$, there exists $r_x\geq 0$  such that the subset $U\cap \{w\in X: \|w\|\geq r_x\}$ is nonempty and $b$ is uniformly convex relative to $(\{x\},U\cap \{w\in X: \|w\|\geq r_x\})$ with a gauge $\psi_x$ which  satisfies $\lim_{t\to\infty}\psi_x(t)=\infty$.  Then for each $x\in \dom(b)$ and each $\gamma\in [0,\infty)$, the first type level-set $L_1(x,\gamma):=\{y\in U: B(x,y)\leq \gamma\}$ is bounded.  Moreover, if, given $x\in\dom(b)$, the gauge $\psi_x$ satisfies $\psi_x(0)=0$, and it is continuous and strictly increasing on $[0,\infty)$, then $\max\{2\psi_x^{-1}(\gamma),2r_x,\psi_x^{-1}(\gamma)+r_x+\|x\|\}$ is an upper bound on the diameter of $L_1(x,\gamma)$. 

Similarly, if for each $y\in U$, there exists $r_y\geq 0$ having the property that the subset $\{w\in \dom(b): \|w\|\geq r_y\}$ is nonempty and $b$ is uniformly convex relative to $(\{w\in \dom(b): \|w\|\geq r_y\},\{y\})$ with a gauge $\psi_y$ which  satisfies $\lim_{t\to\infty}\psi_y(t)=\infty$, then for each $y\in \dom(b)$ and each $\gamma\in [0,\infty)$ the second type level-set $L_2(y,\gamma):=\{x\in \dom(b): B(x,y)\leq \gamma\}$ is bounded. Moreover, if, given $y\in U$, the gauge $\psi_x$ satisfies $\psi_x(0)=0$, and it is continuous and strictly increasing on $[0,\infty)$, then $\max\{2\psi_y^{-1}(\gamma),2r_y,\psi_y^{-1}(\gamma)+r_y+\|y\|\}$ is an upper bound on the diameter of $L_2(y,\gamma)$.

Finally, if $b$ is uniformly convex on $\dom(b)$, then all the first and the second type level-sets of $B$ are bounded. 
\item\label{BregProp:LevelSetBounded2SuffCond} Each of the following conditions is sufficient for the boundedness of all the level-sets of the first type $L_1(x,\gamma)$, $x\in\dom(b)$, $\gamma\in [0,\infty)$:
\begin{enumerate}[(i)]
\item $U$ is bounded;
\item $U$ is unbounded and $\lim_{\|y\|\to\infty,\, y\in U}B(x,y)=\infty$ for all $x\in\dom(b)$.
\end{enumerate} 

\item\label{BregProp:LevelSet2BoundedSufficientConditions} Each of the following conditions is sufficient for the boundedness of all the level-sets of the second type $L_2(y,\gamma)$, $y\in U$, $\gamma\in [0,\infty)$:
\begin{enumerate}[(i)]
\item $U$ is bounded;
\item $U$ is unbounded and $\lim_{\|x\|\to\infty, \,x\in \dom(b)}B(x,y)=\infty$ for all $y\in U$;
\item $X$ is finite-dimensional and $b$ is strictly convex on $U$ and is essentially smooth;
\item\label{item:L2StrictlyConvex} $X$ is finite-dimensional and $b$ is strictly convex on $\dom(b)$.
\end{enumerate}

\item\label{BregProp:B(x,x_i)-B(x,y_i)=0} Suppose that $b'$ is uniformly continuous on all bounded and convex subsets of $U$. Given two sequences $(x_i)_{i=1}^{\infty}$  and  $(y_i)_{i=1}^{\infty}$ in $U$ such that one of them is bounded, suppose that  $\lim_{i\to\infty}\|x_i-y_i\|=0$. Then $\lim_{i\to\infty}(B(x,x_i)-B(x,y_i))=0$ for each $x\in \dom(b)$.

\item\label{BregProp:B(x,y)=B(x,y_i)-B(y,y_i)} Suppose that $b'$ is weak-to-weak$^*$ sequentially  continuous on $U$. If $(y_i)_{i=1}^{\infty}$ is a sequence in $U$ which converges weakly to  $y\in U$, then $B(x,y)=\lim_{i\to\infty}(B(x,y_i)-B(y,y_i))$ for all $x\in \dom(b)$. In particular, under the assumptions that $X$ is finite-dimensional and $b'$ is continuous on $U$, if $(y_i)_{i=1}^{\infty}$ is a sequence in $U$ which converges to some $y\in U$ and if $x\in \dom(b)$, then $B(x,y)=\lim_{i\to\infty}(B(x,y_i)-B(y,y_i))$.

\item Given $x\in\dom(b)$, if $b$ is uniformly convex relative to $(\{x\},S_2\cap U)$ with a relative gauge $\psi_x$ which is continuous and strictly increasing on $[0,\infty)$, and satisfies $\lim_{t\to\infty}\psi_x(t)=\infty$ and $\psi_x(0)=0$, and if $\lim_{i\to\infty}B(x,y_i)=0$ for some sequence $(y_i)_{i=1}^{\infty}$ in $S_2\cap U$, then $(y_i)_{i=1}^{\infty}$ converges strongly to $x$. Similarly, given $y\in U$, if $b$ is uniformly convex relative to $(S_1,\{y\})$  with a relative gauge $\psi_y$ which is continuous and strictly increasing on $[0,\infty)$, and satisfies $\lim_{t\to\infty}\psi_y(t)=\infty$ and  $\psi_y(0)=0$, and if $\lim_{i\to\infty}B(x_i,y)=0$ for some sequence $(x_i)_{i=1}^{\infty}$ in $S_1$, then $(x_i)_{i=1}^{\infty}$ converges strongly to $y$. 
\end{enumerate} 
\end{prop}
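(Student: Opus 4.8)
The plan is to deduce everything from the lower bound $\psi(\|x-y\|)\le B(x,y)$ established in Part \beqref{BregProp:BregPsiS1S2}, combined with the nonnegativity of $B$ from Part \beqref{BregProp:B=0}. Consider the first assertion and fix $x\in\dom(b)$. Since $\psi_x$ is continuous and strictly increasing on $[0,\infty)$, satisfies $\psi_x(0)=0$, and $\lim_{t\to\infty}\psi_x(t)=\infty$, the intermediate value theorem shows that $\psi_x$ maps $[0,\infty)$ bijectively onto $[0,\infty)$; hence its inverse $\psi_x^{-1}$ is defined, strictly increasing and continuous on all of $[0,\infty)$, with $\psi_x^{-1}(0)=0$. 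By Part \beqref{BregProp:B=0} we have $B(x,y_i)\ge 0$ for every $i$, so each $B(x,y_i)$ lies in the range $[0,\infty)=\psi_x([0,\infty))$. Applying the third (``specifying even further'') assertion of Part \beqref{BregProp:BregPsiS1S2} with $S_1=\{x\}$, with the given $S_2\cap U$, and with $J=[0,\infty)$, we obtain $\|x-y_i\|\le\psi_x^{-1}(B(x,y_i))$ for every $i$.

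From $\lim_{i\to\infty}B(x,y_i)=0$ and the continuity of $\psi_x^{-1}$ at $0$ we then get $\lim_{i\to\infty}\psi_x^{-1}(B(x,y_i))=\psi_x^{-1}(0)=0$, and therefore $\lim_{i\to\infty}\|x-y_i\|=0$; that is, $(y_i)_{i=1}^{\infty}$ converges strongly to $x$, as claimed. The second assertion is proved in exactly the same way, with the roles of the two arguments of $B$ interchanged: here $y\in U$, so $\{y\}\cap U=\{y\}$, and Part \beqref{BregProp:BregPsiS1S2} (applied with the singleton $\{y\}$ playing the role of the second set and $S_1$ the first) yields $\psi_y(\|x_i-y\|)\le B(x_i,y)$, hence $\|x_i-y\|\le\psi_y^{-1}(B(x_i,y))$; letting $i\to\infty$ and using $\lim_{i\to\infty}B(x_i,y)=0$ together with the continuity of $\psi_y^{-1}$ at $0$ gives $\lim_{i\to\infty}\|x_i-y\|=0$.

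There is essentially no hard step here, since the analytic content is packaged entirely into Part \beqref{BregProp:BregPsiS1S2}. The only points requiring a little care are, first, checking that the stated hypotheses on $\psi_x$ (and $\psi_y$) are precisely what is needed to guarantee that $\psi_x^{-1}$ is a genuine continuous increasing function defined on the whole interval $[0,\infty)$ in which the values $B(x,y_i)$ lie (this is where $\psi_x(0)=0$ and $\lim_{t\to\infty}\psi_x(t)=\infty$ are used), and, second, invoking $B\ge 0$ so that the inverse is applied only to nonnegative arguments. Once these are in place, the strong convergence follows immediately from the continuity of $\psi_x^{-1}$ (respectively $\psi_y^{-1}$) at the origin.
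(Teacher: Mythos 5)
Your argument is correct and is essentially the paper's own proof: both reduce the claim to the inequality $\psi_x(\|x-y_i\|)\leq B(x,y_i)$ from Part \beqref{BregProp:BregPsiS1S2}, use the hypotheses on $\psi_x$ (continuity, strict monotonicity, $\psi_x(0)=0$, divergence at infinity) to invert the gauge on all of $[0,\infty)$, and then let $i\to\infty$ using the continuity of $\psi_x^{-1}$ at $0$; the symmetric case is handled identically. Your explicit remarks on why $B\geq 0$ and why $\psi_x$ maps $[0,\infty)$ onto itself are exactly the "well-known results from calculus" the paper invokes.
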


\begin{proof}
\begin{enumerate}[(I)] 
\item We first recall that the modulus of uniform convexity $\psi_{b,\{x\},\{y\}}$ (see \beqref{eq:Relative_psi}) exists even if $b$ is not uniformly convex relative to $(\{x\},\{y\})$. Now, since $b$ is G\^ateaux differentiable at $y$, it follows from \beqref{eq:RelativeUniformConvexity} that for every $\lambda\in (0,1)$, we have   
\begin{multline}\label{eq:(1-lambda)psiB(x,y)} 
\psi(\|x-y\|)\leq \frac{\lambda b(x)+(1-\lambda)b(y)-b(\lambda x+(1-\lambda)y)}{\lambda(1-\lambda)}\\
=\frac{1}{1-\lambda}\left(b(x)-b(y)-\frac{b(y+\lambda(x-y))-b(y)}{\lambda}\right)\\
\xrightarrow[\lambda\to 0]{}b(x)-b(y)-\langle b'(y),x-y\rangle=B(x,y),
\end{multline}
as required. If, in addition, $S_2\cap U\neq\emptyset$ and $b$ is uniformly convex relative to $(S_1,S_2\cap U)$ with some relative gauge $\psi$, then, in particular, $\psi$ is a relative gauge of $b$ on $(\{x\},\{y\})$  for every $x\in S_1$ and $y\in S_2\cap U$. Thus we can conclude from previous lines that $\psi(\|x-y\|)\leq B(x,y)$  for all $x\in S_1$ and $y\in S_2\cap U$. Suppose further  that $\psi$ is strictly increasing and continuous on an interval $J\subseteq [0,\infty)$ and $\psi(J)$  contains $B(x,y)$ for each $(x,y)\in S_1\times (S_2\cap U)$. The first assumption implies that $\psi^{-1}$ exists on $\psi(J)$ and is strictly  increasing there, and the second assumption combined with \beqref{eq:(1-lambda)psiB(x,y)} imply that $\|x-y\|\leq \psi^{-1}(B(x,y))$ for each $(x,y)\in S_1\times (S_2\cap U)$. 

\item Fix $y\in U$ and for each $x\in X$ let $h(x):=B(x,y)$. Then $h$ is the sum of the continuous linear (hence convex and lower semicontinuous) function $L(x):=-b(y)-\langle b'(y),x-y\rangle$, $x\in X$, and the function $b$, which is assumed to be convex and lower semicontinuous. Thus $h$ is convex and lower semicontinuous. If $b$ is also  assumed to be uniformly convex relative to $(S_1,S_2)$ with gauge $\psi$, then by adding the left-hand side of the equality $L(\lambda x+(1-\lambda)y)=\lambda L(x)+(1-\lambda)L(y)$ (for an arbitrary $\lambda\in (0,1)$) to the left-hand side of \beqref{eq:RelativeUniformConvexity}, and the right-hand side of this inequality to the right-hand side of \beqref{eq:RelativeUniformConvexity}, we see that $h$ is uniformly convex relative to $(S_1,S_2)$ and $\psi$ is a gauge of $h$. 

\item The equality $B(x,y)=0$ is immediate from \beqref{eq:BregmanDistance}  when $x=y\in U$. Now fix some $y\in X$ and let $x\in X$. If $y\notin U$, then $B(x,y)=\infty>0$. Assume now that $y\in U$. If $x\notin \dom(b)$, then $B(x,y)=\infty>0$. If $x\in \dom(b)$, then from \beqref{eq:(1-lambda)psiB(x,y)} (with $\psi_{b,\{x\},\{y\}}$ instead of $\psi$) and the fact that the optimal gauge of a convex function is nonnegative (even if the function itself is not uniformly convex: see Remark \bref{rem:TypesOfConvexity}\beqref{item:PsiIsNonnegative}), we have $B(x,y)\geq \psi_{b,\{x\},\{y\}}(\|x-y\|)\geq 0$, as required.  

Now assume that for some $x\in \dom(b)$ and $y\in U$ satisfying $x\neq y$, we know that $b$ is  strictly convex on the open interval $(x,y)$. Our goal is to prove that $B(x,y)>0$ in this case. The assumed strict convexity of $b$ on $(x,y)$ implies that $h(\lambda):=b(\lambda x+(1-\lambda)y)<\lambda b(x)+(1-\lambda)b(y)$ for each $\lambda\in (0,1)$. This inequality and the fact that $h(0)=b(y)<\infty$ imply that $h$ is  finite on $[0,1)$ and also that  $g(\lambda):=(h(\lambda)-h(0))/\lambda<b(x)-b(y)$ for all $\lambda\in (0,1)$. It will be shown in a moment that $g$ is increasing on $(0,1)$. Thus $\lim_{\lambda \to 0+}g(\lambda)$ exists and  $g(\lambda)$ decreases to it as $\lambda\to 0^+$. In particular, the limit is smaller than $b(x)-b(y)$. Since $b$ is G\^ateaux differentiable at $y$, it follows from \beqref{eq:Gateaux}  that $\lim_{\lambda \to 0+}g(\lambda)=\langle b'(y),x-y\rangle$. These facts and \beqref{eq:BregmanDistance} imply that $B(x,y)>0$, as claimed. 

It remains to show that $g$ is increasing (actually strictly increasing) on $(0,1)$. This fact is known, but we provide a proof anyway, since the proof is very short. Fix arbitrary $0<\lambda_1<\lambda_2<1$. The inequality $g(\lambda_1)<g(\lambda_2)$ is equivalent to the inequality $h(\lambda_1)<(\lambda_1/\lambda_2)h(\lambda_2)+(1-(\lambda_1/\lambda_2))h(0)$. This last  inequality holds because $\lambda_1/\lambda_2\in (0,1)$ and $h$ is strictly convex on $(0,1)$ (as follows from the strict convexity of $b$ on the interval $(x,y)$ and the definition of $h$). 

Finally, we need to show that if $b$ is strictly convex on $U$ and $x\in \dom(b)$ and $y\in U$ are given, then $B(x,y)=0$ if and only if $x=y$. We have already seen that if $x=y$, then $B(x,y)=0$. On the other hand, suppose that $B(x,y)=0$. Assume to the contrary that $x\neq y$. Since $y\in U$ and $x\in \dom(b)\subseteq \cl{U}$, the open line segment $(x,y)$ is contained in $U$ (see, for instance, \cite[Theorem 2.23(b), p. 28]{VanTiel1984book}). Hence we conclude from previous paragraphs that $B(x,y)>0$, a contradiction which proves that $x=y$. 

\item From the uniform convexity of $b$ on $S$ and from Part \beqref{BregProp:BregPsiS1S2} we know that  for each $i\in \N$, the inequality $\psi_{b,S}(\|x_i-y_i\|)\leq B(x_i,y_i)$ holds, where $\psi_{b,S}$ is the modulus of uniform convexity of $b$ on $S$ defined in \beqref{eq:Relative_psi}. Assume to the contrary that $\|x_i-y_i\|\nrightarrow 0$ as $i\to \infty$. Then there exist $\epsilon>0$ and subsequences $(x_{i_k})_{k=1}^{\infty}$, $(y_{i_k})_{k=1}^{\infty}$ of $(x_i)_{i=1}^{\infty}$ and $(y_i)_{i=1}^{\infty}$, respectively,  such that $\|x_{i_k}-y_{i_k}\|\geq \epsilon$ for all $k\in\N$. Since the modulus of uniform convexity is an increasing function on $[0,\infty)$ as follows from Lemma  \bref{lem:MonotonePsi} and since it attains positive values on $(0,\infty)$, the assumption $\lim_{i\to\infty}B(x_i,y_i)=0$ implies that  for each $k\in \N$, 
\begin{equation*}
0<\psi_{b,S}(\epsilon)\leq \psi_{b,S}(\|x_{i_k}-y_{i_k}\|)\leq B(x_{i_k},y_{i_k})\xrightarrow[k\to \infty]{}0. 
\end{equation*}
This is a contradiction which proves the assertion. 

\item Assume first that both sequences $(x_i)_{i=1}^{\infty}$ and $(y_i)_{i=1}^{\infty}$ are bounded. Let $S$ be the convex hull of the set $\{x_i,y_i: i\in \N\}$. This is a nonempty, bounded and convex subset of the convex set $\dom(b)$. Hence by our assumption $b$ is uniformly convex on it. Since $x_i\in S$ and $y_i\in S\cap U$  for each $i\in\N$, it follows from Part \beqref{BregProp:UniformlyConvexAndB(x_i,y_i)=0==>|x_i-y_i|=0} that $\lim_{i\to\infty}\|x_i-y_i\|=0$, as required. 

Now we show that the claim mentioned in the previous paragraph holds if we merely assume that only one of the sequences $(x_i)_{i=1}^{\infty}$ or $(y_i)_{i=1}^{\infty}$ (and not both simultaneously) is bounded. We modify below an argument which appears in \cite[p. 219]{SolodovSvaiter2000jour} in a somewhat different context. Assume first that $(x_i)_{i=1}^{\infty}$ is bounded. If, to the contrary, $(y_i)_{i=1}^{\infty}$ is unbounded, then this assumption and the fact that $(x_i)_{i=1}^{\infty}$ is bounded imply there exist some $\epsilon>0$ and subsequences $(x_{i_j})_{j=1}^{\infty}$ of $(x_{i})_{i=1}^{\infty}$ and $(y_{i_j})_{j=1}^{\infty}$ of $(y_{i})_{y=1}^{\infty}$, respectively, such that $\|y_{i_j}-x_{i_j}\|>\epsilon$ for each $j\in\N$. Define for all $j\in\N$
\begin{equation*}
z_{j}:=\frac{\epsilon}{\|y_{i_j}-x_{i_j}\|}y_{i_j}+\left(1-\frac{\epsilon}{\|y_{i_j}-x_{i_j}\|}\right)x_{i_j}. 
\end{equation*}
Then $\|z_j-x_{i_j}\|=\epsilon$ for each $j\in\N$ and hence $(z_j)_{j=1}^{\infty}$ is bounded. Moreover, since $z_j$ is a strict convex combination of a point from $\dom(b)$ and a point from $\Int(\dom(b))$, we can use \cite[Theorem 2.23(b), p. 28]{VanTiel1984book} to conclude that $z_j\in\Int(\dom(b))$. Thus \cite[Lemma 2.2]{SolodovSvaiter2000jour} (which is formulated in a finite-dimensional setting, but its simple proof holds word for word in any real normed space with the same assumptions on $b$ as we assume) and Part \beqref{BregProp:B=0} above imply that $B(x_{i_j},z_j)\leq B(x_{i_j},y_{i_j})$ for every $j\in\N$. Since we assume that $\lim_{i\to\infty}B(x_{i},y_{i})=0$, we have $\lim_{j\to\infty}B(x_{i_j},y_{i_j})=0$. Since $B$ is nonnegative (Part \beqref{BregProp:B=0}), we conclude that $\lim_{j\to\infty}B(x_{i_j},z_j)=0$. Hence we are in the setting of the first paragraph, where $(x_i)_{i=1}^{\infty}$ is replaced by $(x_{i_j})_{j=1}^{\infty}$ and $(y_i)_{i=1}^{\infty}$ is replaced by $(z_{j})_{j=1}^{\infty}$, and therefore $\lim_{j\to\infty}\|x_{i_j}-z_j\|=0$, in contrast to the equality $\|z_j-x_{i_j}\|=\epsilon$ for each $j\in\N$ which was established earlier. This contradiction proves that indeed $(y_i)_{i=1}^{\infty}$ is bounded, as claimed. 

We still need to consider the case where $(y_i)_{i=1}^{\infty}$ is bounded and to prove that this assumption implies that $(x_i)_{i=1}^{\infty}$ is bounded too. The proof of this case is similar to the previous case,  where we interchange the roles of $(x_i)_{i=1}^{\infty}$ and $(y_i)_{i=1}^{\infty}$, namely $z_{j}:=(\epsilon/\|x_{i_j}-y_{i_j}\|)x_{i_j}+(1-(\epsilon/\|x_{i_j}-y_{i_j}\|))y_{i_j}$ and so on (in this case \cite[Lemma 2.2]{SolodovSvaiter2000jour} and Part \beqref{BregProp:B=0} above imply that $B(z_j,y_{i_j})\leq B(x_{i_j},y_{i_j})$ for every $j\in\N$). 

Finally, it remains to consider the case where $b$ is uniformly convex on every nonempty, bounded and convex subset of $\dom(b)$, and where $(x_i)_{i=1}^{\infty}$ is a sequence in $\dom(b)$ and $(y_i)_{i=1}^{\infty}$ is a sequence in $U$ such that one of these sequences converges  strongly to some $z\in \overline{\dom(b)}$ and $\lim_{i\to\infty}B(x_i,y_i)=0$. The sequence which converges strongly is bounded. This fact, the assumption $\lim_{i\to\infty}B(x_i,y_i)=0$ and the previous paragraphs imply that the other sequence is bounded too and $\lim_{i\to\infty}\|x_i-y_i\|=0$. This equality and the triangle inequality imply that the other sequence converges to $z$ as well. 

\item Consider an arbitrary cluster point $x$ of $(x_i)_{i=1}^{\infty}$, i.e., $\lim_{k\to\infty}\|x-x_{i_k}\|=0$ for some subsequence $(x_{i_k})_{k=1}^{\infty}$ of $(x_i)_{i=1}^{\infty}$. At least one cluster point exists since  $(x_i)_{i=1}^{\infty}$ is contained in a compact subset of $X$. Since $\lim_{i\to\infty}\|y_i-y\|=0$ and $y\in U$, the boundedness of $(y_i)_{i=1}^{\infty}$ (which follows from the assumption that $(y_i)_{i=1}^{\infty}$ converges), together with \beqref{eq:BregmanDistance}, the continuity of $b$ on $\dom(b)$, and the continuity of $b'$ on $U$, all imply that $\lim_{k\to\infty}B(x_{i_k},y_{i_k})=B(x,y)$. Since $\lim_{i\to\infty}B(x_i,y_i)=0$ holds by our assumption, it follows that $B(x,y)=0$. Assume to the contrary that $x\neq y$. Since $\dom(b)$ is convex  (because $b$ is convex) and $y\in U=\Int(\dom(b))$ and $x\in\dom(b)$, the nonempty open interval $(x,y)$ is contained in $U$ (see \cite[Theorem 2.23(b), p. 28]{VanTiel1984book}). From this fact, the assumption that $b$ is  strictly convex on $U$, and from Part \beqref{BregProp:B=0} one has $x=y$, a contradiction. Thus indeed $x=y$. Since $x$ was an arbitrary cluster point of $(x_i)_{i=1}^{\infty}$, we conclude that all the cluster points of this sequence coincide with $y$. Hence $y=\lim_{i\to\infty}x_i$, as claimed. 

\item 
Assume to the contrary that the assertion is false, namely it is not true that $\lim_{i\to\infty}\|x_i-y_i\|=0$. Hence for some $\epsilon>0$ and a (strictly monotone) subsequence $(i_j)_{j=1}^{\infty}$ of natural numbers we have 
\begin{equation}\label{eq:x_i_j y_i_j} 
\|x_{i_j}-y_{i_j}\|\geq \epsilon.      
\end{equation}
Since $(y_i)_{i=1}^{\infty}$ is contained in a compact subset of $\overline{\dom(b)}$, also its subsequence $(y_{i_j})_{j=1}^{\infty}$ is contained in that compact subset. Hence $(y_{i_j})_{j=1}^{\infty}$ has a convergent subsequence $(y_{i_{j_k}})_{k=1}^{\infty}$ which converges to some $y$ in the closure of $\dom(b)$. Since we assume that $\lim_{i\to\infty}B(x_i,y_i)=0$, it follows  that $\lim_{k\to\infty}B(x_{i_{j_k}},y_{i_{j_k}})=0$. Since $B$ is a Bregman divergence, 
from Definition \bref{def:BregmanDiv}\beqref{BregmanDef:B(x_i,y_i)=0==>|x_i-y|=0} 
we conclude that also $\lim_{k\to\infty}x_{i_{j_k}}=y$ (here we also use the fact that 
$(x_i)_{i=1}^{\infty}$ is bounded and hence so is $(x_{i_{j_k}})_{k=1}^{\infty}$). Thus $\lim_{k\to\infty}\|x_{i_{j_k}}-y_{i_{j_k}}\|=\|y-y\|=0$. This equality contradicts \beqref{eq:x_i_j y_i_j}. 

\item 
Since $\lim_{i\to\infty}\|x_i-y_i\|=0$ and one of the sequences is bounded, the other sequence is bounded too. Let $W$ be the convex hull of the subset $\{x_i,y_i: i\in \N\}$. Then $W$ is a convex subset of the convex subset $U$ and $W$ is bounded because $\{x_i,y_i: i\in \N\}$ is a bounded subset of $U$. Since $b'$ is bounded on bounded  and convex subsets of $U$, there is $\lambda>0$ such that $\|b'(z)\|\leq \lambda$ for all $z\in W$. From the previous lines and the mean value theorem \cite[Theorem 1.8, p. 13]{AmbrosettiProdi1993book} it follows that $b$ is Lipschitz continuous on $W$ with a Lipschitz constant $\lambda$. The previous lines,  \beqref{eq:BregmanDistance}, the triangle inequality, the assumption $\lim_{i\to\infty}\|x_i-y_i\|=0$, the fact that $y_i\in W$ for each $i\in\N$, and Part \beqref{BregProp:B=0}, all imply that $0\leq B(x_i,y_i)\leq |b(x_i)-b(y_i)|+\|b'(y_i)\|\|x_i-y_i\|\leq 2\lambda\|x_i-y_i\|\to 0$ as $i$ tends to infinity, as required.

\item As in the proof of Part \beqref{BregProp:Bounded|x_i-y_i|=0==>B(x_i,y_i)=0}, both $(x_i)_{i=1}^{\infty}$  and $(y_i)_{i=1}^{\infty}$ are bounded. Let $W$ be the convex hull of the  bounded subset $\{x_i,y_i: i\in \N\}\subseteq \dom(b)$. Then $W\cap U$ is convex and bounded (and nonempty since $y_i\in W\cap U$ for each $i\in\N$) and hence, by our assumption on $b'$, there exists $\lambda>0$ such that $\|b'(z)\|\leq \lambda$ for all $z\in W\cap U$. Fix $i\in \N$ and define 
 $h_i(t):=b(tx_i+(1-t)y_i)$ for all $t\in [0,1]$.  Since $x_i\in\dom(b)$ and $y_i\in U=\Int(\dom(b))$, it follows that $tx_i+(1-t)y_i\in\dom(b)$. Hence $h_i$ is well defined and from \cite[Theorem 2.23(b), p. 28]{VanTiel1984book} we have $[y_i,x_i):=\{tx_i+(1-t)y_i: t\in [0,1)\}\subseteq U$. Since $[y_i,x_i)\subseteq W$ holds trivially, it follows that $b'(tx_i+(1-t)y_i)$ exists and $\|b'(tx_i+(1-t)y_i)\|\leq \lambda$ for all $t\in [0,1)$ and all $i\in \N$. This inequality, the fact that $b$ is G\^ateaux differentiable, and direct differentiation of $h_i$ according to the definition, all imply that  $h'_i$ exists in the interval $(0,1)$ and $h'_i(t)=\langle b'(tx_i+(1-t)y_i),x_i-y_i\rangle$ for each $t\in (0,1)$ and each $i\in\N$. Since $b$ is continuous on $\dom(b)$, it is continuous on the segment $[x_i,y_i]$ and hence $h_i$ is continuous on $[0,1]$. Therefore we can use the classical mean value theorem for one-dimensional real functions to conclude that there exists $t_i\in (0,1)$ such that $h_i(1)-h_i(0)=h'_i(t_i)(1-0)$. The above lines imply that for all $i\in\N$, 
\begin{multline}\label{eq:bLipschitz}
|b(x_i)-b(y_i)|=|h_i(1)-h_i(0)|=|h'_i(t_i)|=|\langle  b'(t_ix_i+(1-t_i)y_i),x_i-y_i\rangle|\\
\leq \|b'(t_ix_i+(1-t_i)y_i)\|\|x_i-y_i\|\leq\lambda \|x_i-y_i\|. 
\end{multline}
 This inequality, the previous lines, \beqref{eq:BregmanDistance}, the triangle inequality, the assumption $\lim_{i\to\infty}\|x_i-y_i\|=0$, the fact that $y_i\in W$ for each $i\in\N$, and Part \beqref{BregProp:B=0}, all imply that $0\leq B(x_i,y_i)\leq |b(x_i)-b(y_i)|+\|b'(y_i)\|\|x_i-y_i\|\leq 2\lambda\|x_i-y_i\|\to 0$ as $i$ tends to infinity, as required.
 
Finally, if we assume that $b$ is continuous on $\dom(b)$ and that $b'$ is uniformly continuous on all bounded and convex subsets of $U$, then it follows from Lemma \bref{lem:UniformContinuityConvexSet} that $b'$ is bounded on all bounded and convex subsets of $U$. Thus we can use the previous paragraph (with $x_i=x$ for all $i\in\N$) to conclude that if  $(y_i)_{i=1}^{\infty}$ is a sequence in $U$ which converges to some $x\in \dom(b)$ (hence, in particular, $(y_i)_{i=1}^{\infty}$ is bounded), then  $\lim_{i\to\infty}B(x,y_i)=0$.

\item Since $b'$ is locally bounded at each point of $U$, it is locally bounded at $x\in U$ and hence there exist a neighborhood $V(x)$ of $x$ and $\lambda>0$ such that for all $v\in V(x)$ we have $\|b'(v)\|<\lambda$. Since and $(y_i)_{i=1}^{\infty}$ converges to $x$, for each $i\in\N$ sufficiently large we have $y_i\in V(x)$ and thus $\|b'(y_i)\|<\lambda$. Hence $|\langle b'(y_i),x-y_i\rangle|\leq \lambda\|x-y_i\|\xrightarrow[i\to \infty]{}0$. Therefore we obtain from the continuity of $b$ on $U$ that $B(x,y_i)=b(x)-b(y_i)-\langle b'(y_i),x-y_i\rangle\xrightarrow[i\to \infty]{}0$, as required. 

Finally, suppose that $X$ is finite dimensional and $b:X\to(-\infty,\infty]$ is convex and G\^ateaux differentiable on $U:=\Int(\dom(b))\neq\emptyset$. Then $b'$ is actually continuous on $U$ because $b$ is convex and finite in $U$ (see \cite[Corollary 25.5.1, p. 246]{Rockafellar1970book}). Hence $b'$ is locally bounded at each point of $U$. In addition, since $X$ is finite dimensional, it is a Banach space, and therefore Remark \bref{rem:ContinuityPreBregman} implies that $b$ is continuous on $U$ (alternatively, since $X$ is finite dimensional and $b$ is convex and G\^ateaux differentiable at each point of $U$, it is Fr\'echet differentiable on $U$ by \cite[Theorem 25.2, p. 244]{Rockafellar1970book} and thus continuous there). As a result, the previous paragraph  implies that if $(y_i)_{i=1}^{\infty}$ is a sequence in $U$ which converges to some $x\in U$, then $\lim_{i\to\infty}B(x,y_i)=0$. 

\item $B(x,y_i)=b(x)-b(y_i)-\langle b'(y_i),x-y_i\rangle\xrightarrow[i\to \infty]{}0$ according to the assumptions. 

\item
 According to our assumptions, $r:=\sup\{\|x-y\|: y\in S\}<\infty$ and $x\in U$. Since $S\subseteq U$, it follows that   $\{x\}\bigcup S$ is a bounded subset of the convex subset $U$, and hence the convex hull $W$ of $\{x\}\bigcup S$ is a  bounded subset of $U$ as well. Because of our assumption on $b'$, there is $\lambda>0$ such that $\sup\{\|b'(z)\|: z\in W\}\leq \lambda$.   From the previous lines and the mean value theorem \cite[Theorem 1.8, p. 13]{AmbrosettiProdi1993book} it follows that $b$ is Lipschitz continuous on $W$ with a Lipschitz constant $\lambda$. This fact, \beqref{eq:BregmanDistance}, and the triangle inequality imply that 
 $B(x,y)\leq |b(x)-b(y)|+\|b'(y)\|\|x-y\|\leq 2\lambda\|x-y\|\leq 2\lambda r<\infty$ for all $y\in S$, as required. 
  
\item  According to our assumptions, $r:=\sup\{\|x-y\|: y\in S\}<\infty$ and $x\in \dom(b)$. Since $S\subseteq U\subseteq\dom(b)$, it follows that   $\{x\}\bigcup S$ is a bounded subset of the convex subset $\dom(b)$, and hence the convex hull $W$ of $\{x\}\bigcup S$ is a  bounded and convex subset of $\dom(b)$ as well. 
Therefore $W\cap U$ is a convex and bounded (and nonempty since $S\subseteq W$) subset of $U$ and from our assumption on $b'$ there is $\lambda>0$ such that $\sup\{\|b'(z)\|: z\in W\cap U\}\leq \lambda$. We can now follow the proof of Part \beqref{BregProp:Continuous_b|x_i-y_i|=0==>B(x_i,y_i)=0} (the lines before \beqref{eq:bLipschitz} and \beqref{eq:bLipschitz} itself, up to obvious modifications in the notation) to conclude that the inequality $\|b(x)-b(y)\|\leq \lambda\|x-y\|$ holds for all $y\in W\cap U$.  The previous lines, \beqref{eq:BregmanDistance}, and the triangle inequality imply that for all $y\in S$, we have
 $B(x,y)\leq |b(x)-b(y)|+\|b'(y)\|\|x-y\|\leq 2\lambda\|x-y\|\leq 2\lambda r<\infty$.

\item From Part \beqref{BregProp:BregPsiS1S2}, the assumption that $b$ is uniformly convex relative to $(\{x\},V)$, the assumption that $S''\subseteq V$, and the assumption that $\sigma:=\sup\{B(x,y): y\in S\}<\infty$, it follows that 
\begin{equation}\label{eq:psi<sigma}
\psi(\|x-y\|)\leq B(x,y)\leq \sigma,\quad \forall\, y\in S''. 
\end{equation}
Assume to the contrary that $S$ is unbounded. Hence $\sup\{\|x-y\|: y\in S\}=\infty$ and there exists a sequence $(y_i)_{i=1}^{\infty}$ of elements in $S$ such that $\lim_{i\to\infty}\|x-y_i\|=\infty$. 
Because $S=S'\cup S''$ and since $S'$ is assumed to be bounded, it follows that $y_i\in S''$ for all $i$ sufficiently large. Since we assume that $\lim_{t\to\infty}\psi(t)=\infty$, it follows that $\psi(\|x-y_i\|)>\sigma$ for all $i$ sufficiently large. This is a contradiction to \beqref{eq:psi<sigma}. The other case (in  which $b$ is uniformly convex relative to $(W,\{y\})$ and so forth) can be proved in a similar way.

\item  The first sub-part is an immediate corollary of Part \beqref{BregProp:B(x,S)Bounded==>SisBounded} where $S:=L(x,\gamma)$, $V:=U\cap \{w\in X: \|w\|\geq r_x\}$, $S':=\{w\in X: \|w\|<r_x\}\cap S$,  $S'':=V\cap S$. If, for some $x\in \dom(b)$, we also assume that $\psi_x$ is strictly increasing and continuous on $[0,\infty)$, then the assumption $\lim_{t\to\infty}\psi_x(t)=\infty$ implies, by using elementary calculus, that $\psi_x^{-1}$ exists on $[0,\infty)$. Hence from Part \beqref{BregProp:BregPsiS1S2} (with $S_1:=\{x\}$ and $S_2:=S''$) we have 
\begin{equation}\label{eq:S''psi-1}
\|x-z\|\leq \psi_x^{-1}(B(x,z))\leq \psi_x^{-1}(\gamma),\quad \forall z\in S''. 
\end{equation}
Now fix  $y$ and $z$ in $S$. Since $S=S'\cup S''$, either both points are in $S'$, or both of them are in $S''$, or one point is in $S'$ and the other is in $S''$. In the first case, since $S'$ is contained in the ball of radius $r_x$ centered at the origin, we have $\|y-z\|\leq 2r_x$. In the second case, $\|y-z\|\leq \|y-x\|+\|x-z\|\leq 2\psi_x^{-1}(\gamma)$ because of \beqref{eq:S''psi-1}. In the third case, if $y\in S'$ and $z\in S''$, then $\|y-z\|\leq \|y-x\|+\|x-z\|\leq \|x\|+\|y\|+\psi_x^{-1}(\gamma)\leq \|x\|+r_x+\psi_x^{-1}(\gamma)$, where we use the triangle inequality, the definition of $S'$ and \beqref{eq:S''psi-1}. A similar calculation holds if $z\in S'$ and $y\in S''$. We conclude that the diameter of $L(x,\gamma)$ is bounded above by $\max\{2\psi_x^{-1}(\gamma),2r_x,\psi_x^{-1}(\gamma)+r_x+\|x\|\}$, as required. The proof in the case of the second type level-set $L_2(y,\gamma)$, $y\in U$, $\gamma\in\R$ follows similar lines. 

Finally, suppose that $b$ is uniformly convex on $\dom(b)$. For each $x\in \dom(b)$ and each $y\in U$, if we let $r_x$ and $r_y$ to be any (fixed) nonnegative numbers, then $b$ is, in particular, uniformly convex relative to $(\{x\},\{w\in U: \|w\|\geq r_x\})$ and $(\{w\in \dom(b): \|w\|\geq r_x\},\{y\})$ with $\psi_x:=\psi_{b,\dom(b)}=:\psi_y$. Now, if $\dom(b)$ is unbounded, then since $\lim_{t\to\infty}\psi_{b,\dom(b)}(t)=\infty$ (Lemma \bref{lem:MonotonePsi}), we conclude from the previous paragraphs that all the first and second type level-sets of $B$ are bounded. These sets are obviously bounded also in the case where  $\dom(b)$ is bounded.

\item The proof of the first condition is immediate, and the proof of the second condition is simple too (by assuming to the contrary that $L(x,\gamma)$ is unbounded and arriving at a contradiction).

\item The proof of the first two conditions is as in the previous part. As for the third, since $b$ is essentially smooth and $X$ is finite-dimensional, it follows from \cite[Theorem 26.1, pp. 251--252]{Rockafellar1970book} and the definition of the subdifferential that $\partial b(y)=\emptyset$ for all $y\notin \Int(\dom(b))=U$ and $\partial b(y)=\{b'(y)\}$ for all $y\in U$. Therefore $\dom(\partial b)=U$. Since $b$ is strictly convex on $U$, it is strictly convex on $\dom(\partial b)$, and since $X$ is finite-dimensional, this property of $b$ is nothing but essential strict convexity (on $X$). Hence we can use either \cite[Theorem 3.7(iii)]{BauschkeBorwein1997jour} or \cite[Lemma 7.3(v)]{BauschkeBorweinCombettes2001jour} to conclude  that $L_2(y,\gamma)$ is bounded for all $y\in U$ and $\gamma\in \R$. Consider now the fourth part. Since $\dom(\partial b)\subseteq \dom(b)$ and since $b$ is assumed to be strictly convex on $\dom(b)$, it follows that $b$ is essentially strictly convex. Thus we can use either \cite[Theorem 3.7(iii)]{BauschkeBorwein1997jour} or \cite[Lemma 7.3(v)]{BauschkeBorweinCombettes2001jour} to conclude that all the second type level-sets of $B$ are bounded. 

\item Fix $x\in\dom(b)$. By \beqref{eq:BregmanDistance}, we have  
\begin{multline}\label{eq:Bx_iy_ib'}
B(x,x_i)-B(x,y_i)\\
=(b(x)-b(x_i)-\langle b'(x_i),x-x_i\rangle)-
(b(x)-b(y_i)-\langle b'(y_i),x-y_i\rangle)\\
=b(y_i)-b(x_i)-\langle b'(x_i),x-x_i\rangle+
\langle b'(x_i)+b'(y_i)-b'(x_i),x-y_i\rangle\\
=b(y_i)-b(x_i)-\langle b'(x_i),y_i-x_i\rangle
+\langle b'(y_i)-b'(x_i),x-y_i\rangle\\
=B(y_i,x_i)+\langle b'(y_i)-b'(x_i),x-y_i\rangle.
\end{multline}
Since $b'$ is uniformly continuous on each bounded and convex subset $S$ of $U$, it follows from Lemma \bref{lem:UniformContinuityConvexSet} that $b'$ is also bounded on each such subset $S$. We conclude from Part \beqref{BregProp:Bounded|x_i-y_i|=0==>B(x_i,y_i)=0} that 
$\lim_{i\to\infty}B(y_i,x_i)=0$. Because of \beqref{eq:Bx_iy_ib'}, it is sufficient to show that $\lim_{i\to\infty}(\langle b'(y_i)-b'(x_i),x-y_i\rangle)=0$ in order to conclude that $\lim_{i\to\infty}(B(x,x_i)-B(x,y_i))=0$. 

Indeed, since one of the sequences $(x_i)_{i=1}^{\infty}$ or $(y_i)_{i=1}^{\infty}$ is bounded, the condition $\lim_{i\to\infty} \|x_i-y_i\|=0$  implies the other sequence is bounded as well. Hence, the convex hull $W$ of $\{x_i,y_i: i\in\N\}$ is a bounded and convex subset of the convex subset $U$. Since  $(y_i)_{i\in\N}$ is bounded, there exists $r\in (0,\infty)$ such that 
$\|x-y_i\|<r$ for all $i\in \N$. Given $\epsilon>0$, the  uniform continuity of $b'$ on bounded and convex subsets of $U$ implies that $b'$ is uniformly continuous on $W$. Thus  there exists $\delta>0$ such that for all $(u,v)\in W^2$ satisfying  $\|u-v\|<\delta$, the inequality
$\|b'(u)-b'(v)\|<\epsilon/r$ holds. Since $\lim_{i\to\infty} \|x_i-y_i\|=0$, for all $i$ sufficiently large,  we have $\|x_i-y_i\|<\delta$. It follows that for all such $i$, 
\begin{equation*}
|\langle b'(y_i)-b'(x_i),x-y_i\rangle|\leq \|b'(y_i)-b'(x_i)\|\|x-y_i\|<(\epsilon/r)r=\epsilon. 
\end{equation*}
The assertion follows because $\epsilon$ was an arbitrary positive number. 

\item Fix $x\in\dom(b)$. Using  \beqref{eq:BregmanDistance} and simple calculations, we get 
\begin{multline*}
B(x,y_i)-B(y,y_i)=b(x)-b(y)-\langle b'(y_i), x-y\rangle\\
=b(x)-b(y)-\langle b'(y), x-y\rangle-\langle b'(y_i)-b'(y), x-y\rangle\\
=B(x,y)-\langle b'(y_i)-b'(y), x-y\rangle.
\end{multline*}
Since  $b'$ is weak-to-weak$^*$ sequentially continuous and $y_i\to y$ weakly, it follows, in particular, that $\lim_{i\to\infty}(\langle b'(y_i)-b'(y), x-y\rangle)=0$. Consequently, $B(x,y)=\lim_{i\to\infty}(B(x,y_i)-B(y,y_i))$. In the particular case where $X$ is finite-dimensional, if $b'$ is continuous on $U$, then it is  weak-to-weak$^*$ sequentially continuous because the weak and strong topologies (or the weak$^*$ and the strong topologies on the dual) coincide. Therefore, if $(y_i)_{i=1}^{\infty}$ is a sequence in $U$ which converges to some $y\in U$ and if $x\in \dom(b)$, then $B(x,y)=\lim_{i\to\infty}(B(x,y_i)-B(y,y_i))$ from the previous lines.
\item The assumptions on $\psi_x$ and well-known results from calculus imply that $\psi_x$ is invertible on $[0,\infty)$ and maps $[0,\infty)$ onto itself, that the inverse $\psi_x^{-1}$ is strictly increasing and continuous on $[0,\infty)$, and also that $\psi_x^{-1}(0)=0$. These properties, the assumption $\lim_{i\to\infty}B(x,y_i)=0$ and Part \beqref{BregProp:BregPsiS1S2} (with $S_1:=\{x\}$), imply that $0\leq \|y_i-x\|\leq \psi_x^{-1}(B(x,y_i)){\,\xrightarrow[i\to \infty]{}}\,0$, namely $\lim_{i\to\infty}\|y_i-x\|=0$, as required. The proof of the second assertion (regarding the convergence of $(x_i)_{i=1}^{\infty}$ to $y$) is similar. 
\end{enumerate} 
\end{proof}

The following two corollaries present useful systems of conditions which suffice to ensure that a given function be a Bregman function. 
\begin{cor}\label{cor:BregmanFunction-b'-is-unicont}
Let $(X,\|\cdot\|)$ be a real normed space. Suppose that $b:X\to(-\infty,\infty]$ is convex and  lower semicontinuous on $X$, continuous on $\dom(b)$, and G\^ateaux differentiable in the subset $U:=\Int(\dom(b))$ which is assumed to be nonempty. Suppose further that $b$ is uniformly convex on all nonempty, bounded, and convex subsets of $\dom(b)$, that $b'$ is uniformly continuous  on all nonempty, bounded, and convex subsets of $U$, and that for each $x\in \dom(b)$, there exists $r_x\geq 0$ such that $b$ is uniformly convex relative to $(\{x\},U\cap \{w\in X: \|w\|\geq r_x\})$ with a gauge $\psi_x$ satisfying $\lim_{t\to\infty}\psi_x(t)=\infty$. Then $b$ is a sequentially consistent Bregman function. If, in addition, $b'$ is weak-to-weak$^*$ sequentially continuous on $U$, then $b$ satisfies the limiting difference property. 
\end{cor}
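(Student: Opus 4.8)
The plan is to verify, one by one, the conditions \beqref{BregmanDef:U}--\beqref{BregmanDef:B(x_i,y_i)=0==>|x_i-y|=0} of Definition \bref{def:BregmanDiv} together with sequential consistency, by matching the hypotheses of the corollary to the appropriate parts of Proposition \bref{prop:BregmanProperties}; the proof is essentially an assembly of already-established facts. Conditions \beqref{BregmanDef:U} and \beqref{BregmanDef:B} are immediate, the former being part of the hypotheses and the latter being the defining formula \beqref{eq:BregmanDistance} for $B$. For \beqref{BregmanDef:ConvexLSC}, convexity and lower semicontinuity of $b$ on $X$ are assumed, so the one substantive point is strict convexity on $\dom(b)$. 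I would obtain this by restricting to line segments: for any $x\neq y$ in $\dom(b)$, the segment $[x,y]$ is a nonempty, bounded, convex subset of $\dom(b)$, hence $b$ is uniformly convex on it, and Remark \bref{rem:TypesOfConvexity}\beqref{item:UniformlyConvex==>StrictlyConvex} then forces strict convexity of $b$ on $[x,y]$. Since $x,y$ were arbitrary, $b$ is strictly convex on $\dom(b)$.

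For the boundedness of the level sets (condition \beqref{BregmanDef:BoundedLevelSet}), the hypothesis that for each $x\in\dom(b)$ there exist $r_x\geq 0$ and a gauge $\psi_x$ with $\lim_{t\to\infty}\psi_x(t)=\infty$ relative to $(\{x\},U\cap\{w:\|w\|\geq r_x\})$ is exactly the hypothesis of Proposition \bref{prop:BregmanProperties}\beqref{BregProp:LevelSetBounded} (the nonemptiness of $U\cap\{w:\|w\|\geq r_x\}$ being implicit in the very notion of relative uniform convexity). That part yields boundedness of $L_1(x,\gamma)$ for every $\gamma\in[0,\infty)$; the case $\gamma<0$ is vacuous since $B\geq 0$ by Proposition \bref{prop:BregmanProperties}\beqref{BregProp:B=0}, as already noted in Remark \bref{rem:LevelSet}. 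Condition \beqref{BregmanDef:|x-y_i|=0==>B(x,y_i)=0} follows from the ``in particular'' clause of Proposition \bref{prop:BregmanProperties}\beqref{BregProp:Continuous_b|x_i-y_i|=0==>B(x_i,y_i)=0}, whose hypotheses---continuity of $b$ on $\dom(b)$ and uniform continuity of $b'$ on bounded convex subsets of $U$---are among our assumptions.

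The step I expect to require the most care, conceptually, is condition \beqref{BregmanDef:B(x_i,y_i)=0==>|x_i-y|=0} together with the sequential consistency; both rest on Proposition \bref{prop:BregmanProperties}\beqref{BregProp:B(x_i,y_i)=0AndOneSequenceBounded==>|x_i-y_i|=0}, whose single hypothesis (uniform convexity of $b$ on every nonempty, bounded, convex subset of $\dom(b)$) is assumed here. That part states outright that $b$ is sequentially consistent. For \beqref{BregmanDef:B(x_i,y_i)=0==>|x_i-y|=0}, given $(x_i)_{i=1}^{\infty}$ bounded in $\dom(b)$, $(y_i)_{i=1}^{\infty}$ in $U$ with $\lim_{i\to\infty}B(x_i,y_i)=0$ and $\lim_{i\to\infty}\|y_i-y\|=0$, the same part supplies $\lim_{i\to\infty}\|x_i-y_i\|=0$, whence the triangle inequality $\|x_i-y\|\leq\|x_i-y_i\|+\|y_i-y\|$ gives $\lim_{i\to\infty}\|x_i-y\|=0$. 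The only subtlety is recognizing that boundedness of the single sequence $(x_i)_{i=1}^{\infty}$ already suffices to invoke Part \beqref{BregProp:B(x_i,y_i)=0AndOneSequenceBounded==>|x_i-y_i|=0}; beyond this, the argument is routine.

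Collecting the above, $b$ satisfies all of Definition \bref{def:BregmanDiv} and is sequentially consistent, so it is a sequentially consistent Bregman function. Finally, under the additional assumption that $b'$ is weak-to-weak$^*$ sequentially continuous on $U$, the limiting difference property is precisely the conclusion of Proposition \bref{prop:BregmanProperties}\beqref{BregProp:B(x,y)=B(x,y_i)-B(y,y_i)}, so no further work is needed and the proof is complete.
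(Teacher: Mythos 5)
Your proposal is correct and follows essentially the same route as the paper's own proof: the same items of Definition \bref{def:BregmanDiv} are matched to the same parts of Proposition \bref{prop:BregmanProperties} (Parts \beqref{BregProp:LevelSetBounded}, \beqref{BregProp:Continuous_b|x_i-y_i|=0==>B(x_i,y_i)=0}, \beqref{BregProp:B(x_i,y_i)=0AndOneSequenceBounded==>|x_i-y_i|=0} and \beqref{BregProp:B(x,y)=B(x,y_i)-B(y,y_i)}), with strict convexity obtained from Remark \bref{rem:TypesOfConvexity}\beqref{item:UniformlyConvex==>StrictlyConvex} exactly as the paper does. The extra details you supply (restricting to segments for strict convexity, and the triangle inequality to pass from $\|x_i-y_i\|\to 0$ to $\|x_i-y\|\to 0$) are correct elaborations of steps the paper leaves implicit.
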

\begin{proof}
Parts \beqref{BregmanDef:U} and \beqref{BregmanDef:ConvexLSC} of Definition \bref{def:BregmanDiv} are satisfied by the assumptions on $b$ and Remark \bref{rem:TypesOfConvexity}\beqref{item:UniformlyConvex==>StrictlyConvex}, Definition \bref{def:BregmanDiv}\beqref{BregmanDef:B} is just the definition of the Bregman divergence $B$, Definition \bref{def:BregmanDiv}\beqref{BregmanDef:BoundedLevelSet} follows from Proposition \bref{prop:BregmanProperties}\beqref{BregProp:LevelSetBounded}, Definition \bref{def:BregmanDiv}\beqref{BregmanDef:|x-y_i|=0==>B(x,y_i)=0} follows from Proposition  \bref{prop:BregmanProperties}\beqref{BregProp:Continuous_b|x_i-y_i|=0==>B(x_i,y_i)=0}. Definition \bref{def:BregmanDiv}\beqref{BregmanDef:B(x_i,y_i)=0==>|x_i-y|=0} and the sequential consistency of $b$ follow from Proposition \bref{prop:BregmanProperties}\beqref{BregProp:B(x_i,y_i)=0AndOneSequenceBounded==>|x_i-y_i|=0}. The limiting difference property follows from Proposition \bref{prop:BregmanProperties}\beqref{BregProp:B(x,y)=B(x,y_i)-B(y,y_i)}. 
\end{proof}

\begin{cor}\label{cor:BregmanFunctionFiniteDim-b'-is-cont}
Let $(X,\|\cdot\|)$ be a real normed space. Suppose that $b:X\to(-\infty,\infty]$ is convex and  lower semicontinuous on $X$ and continuous and uniformly convex on $\dom(b)$. Assume further that  $U:=\Int(\dom(b))$ is nonempty, that $b'$ exists and is uniformly continuous on every bounded and convex subset of $U$ and weak-to-weak$^*$ sequentially continuous on $U$. Then $b$ is a sequentially consistent Bregman function which satisfies the limiting difference property and the second type level-sets of $B$ are bounded. In particular, if $X$ is finite-dimensional, then any  function $b:X\to(-\infty,\infty]$, which is  convex and  lower semicontinuous on $X$,  continuous and uniformly convex on $\dom(b)$, and continuously differentiable on $U$, is a sequentially consistent Bregman function which has the limiting difference property and the second type level-sets of $B$ are bounded. Specializing even more, if $X$ is finite-dimensional, then any  function $b:X\to\R$ which is continuously differentiable and uniformly convex on $X$ is a sequentially consistent Bregman function which has the limiting difference property and the second type level-sets of $B$ are bounded.
\end{cor}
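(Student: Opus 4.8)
The plan is to establish the three conclusions separately: the general (first) statement follows from the preceding corollary together with Proposition \bref{prop:BregmanProperties}, and the two finite-dimensional specializations are deduced from the same results, once the single axiom that does not reduce automatically is isolated. For the general statement I would check that all the hypotheses of Corollary \bref{cor:BregmanFunction-b'-is-unicont} hold. They are assumed verbatim except for two, both of which follow from the global uniform convexity of $b$ on $K:=\dom(b)$: first, uniform convexity on every bounded convex subset of $\dom(b)$ is immediate, since restricting $x,y$ in \beqref{eq:RelativeUniformConvexity} to a subset preserves the inequality with the same gauge; second, for the coercive relative-gauge condition one may take, for every $x\in\dom(b)$, $r_x:=0$ (so that $U\cap\{w:\|w\|\ge r_x\}=U\neq\emptyset$) and $\psi_x:=\psi_{b,K}$, which is coercive because $\lim_{t\to\infty}\psi_{b,K}(t)=\infty$ by Lemma \bref{lem:MonotonePsi} when $\dom(b)$ is unbounded, while $\psi_{b,K}(t)=\infty$ for $t>\diam(\dom(b))$ when $\dom(b)$ is bounded. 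Corollary \bref{cor:BregmanFunction-b'-is-unicont} then gives that $b$ is a sequentially consistent Bregman function with the limiting difference property, and the boundedness of the second type level-sets follows from the last sentence of Proposition \bref{prop:BregmanProperties}\beqref{BregProp:LevelSetBounded}.

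For the finite-dimensional specializations the subtlety is that a $b'$ which is merely continuous on $U$ need \emph{not} be uniformly continuous, or even bounded, on bounded convex subsets of $U$: it may blow up at boundary points of $\dom(b)$ that belong to $\dom(b)$, as for $b(p)=-\sqrt{1-\|p\|^2}$ on the closed unit ball. Hence the general statement does not apply verbatim, and I would verify Definition \bref{def:BregmanDiv} by hand. Every axiom except \beqref{BregmanDef:|x-y_i|=0==>B(x,y_i)=0} reduces at once: axioms \beqref{BregmanDef:U}--\beqref{BregmanDef:B} hold by hypothesis together with Remark \bref{rem:TypesOfConvexity}\beqref{item:UniformlyConvex==>StrictlyConvex}; the first type level-sets are bounded by Proposition \bref{prop:BregmanProperties}\beqref{BregProp:LevelSetBounded} and the second type ones by Proposition \bref{prop:BregmanProperties}\beqref{BregProp:LevelSet2BoundedSufficientConditions}\beqref{item:L2StrictlyConvex}; sequential consistency --- hence also axiom \beqref{BregmanDef:B(x_i,y_i)=0==>|x_i-y|=0} via the triangle inequality --- comes from Proposition \bref{prop:BregmanProperties}\beqref{BregProp:B(x_i,y_i)=0AndOneSequenceBounded==>|x_i-y_i|=0}; and, because in finite dimensions the weak and norm topologies coincide so that a continuous $b'$ is automatically weak-to-weak$^*$ sequentially continuous, the limiting difference property follows from Proposition \bref{prop:BregmanProperties}\beqref{BregProp:B(x,y)=B(x,y_i)-B(y,y_i)}.

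Thus the only remaining point --- and the main obstacle --- is axiom \beqref{BregmanDef:|x-y_i|=0==>B(x,y_i)=0}: for $x\in\dom(b)$ and a sequence $(y_i)$ in $U$ with $\|x-y_i\|\to 0$, show $B(x,y_i)\to 0$. When $x\in U$ this is immediate from Proposition \bref{prop:BregmanProperties}\beqref{BregProp:D(x,y_i)=0U)}, since a continuous $b'$ is locally bounded on $U$; the difficulty is concentrated in the case $x\in\dom(b)\setminus U$. There I would invoke Proposition \bref{prop:BregmanProperties}\beqref{BregProp:D(x,y_i)=0dom(b)}, for which (as $b(y_i)\to b(x)$ by continuity) it suffices to prove $\langle b'(y_i),x-y_i\rangle\to 0$. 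Convexity gives the easy half, $\langle b'(y_i),x-y_i\rangle\le b(x)-b(y_i)\to 0$, so the whole difficulty is the reverse estimate $\limsup_i\langle b'(y_i),y_i-x\rangle\le 0$. This is delicate precisely because a lower bound on $\langle b'(y_i),x-y_i\rangle$ calls for evaluating $b$ at a point past $y_i$ in the direction $y_i-x$, and such a point can leave $\dom(b)$ when $y_i$ approaches $x$ tangentially --- exactly the regime in which $\|b'(y_i)\|\to\infty$. I expect to settle it by combining the sign information from a supporting hyperplane of $\dom(b)$ at $x$ (the inward approach directions $(y_i-x)/\|y_i-x\|$ pair nonpositively with the outward normal, toward which the normalized gradients tend) with the quantitative control of uniform convexity through Proposition \bref{prop:BregmanProperties}\beqref{BregProp:BregPsiS1S2}, which balances the competing rates $\|b'(y_i)\|\,\|y_i-x\|\to\infty$ and the inner product tending to $0$ from below. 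Finally, the last specialization ($b\colon X\to\R$ finite and continuously differentiable on all of $X$) is the benign case $\dom(b)=U=X$, where no boundary points arise and this axiom is already covered by Proposition \bref{prop:BregmanProperties}\beqref{BregProp:D(x,y_i)=0U)}.
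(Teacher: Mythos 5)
Your treatment of the first assertion is correct and coincides with the paper's: one verifies the hypotheses of Corollary \bref{cor:BregmanFunction-b'-is-unicont} with the global gauge $\psi_{b,\dom(b)}$ (coercive by Lemma \bref{lem:MonotonePsi}, or trivially so when $\dom(b)$ is bounded), and the second-type level sets are bounded by the last sentence of Proposition \bref{prop:BregmanProperties}\beqref{BregProp:LevelSetBounded}. Your handling of the third assertion is also fine: there $\dom(b)=U=X$, every bounded convex subset has compact closure inside $U$, so the case genuinely does reduce to the first assertion (or, on your route, to Proposition \bref{prop:BregmanProperties}\beqref{BregProp:D(x,y_i)=0U)}).

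For the second assertion you part ways with the paper, and your suspicion about the intended reduction is justified. The paper disposes of this case in one line by asserting that, in finite dimensions, continuity of $b'$ on $U$ yields uniform continuity on every bounded convex subset of $U$ ``because of compactness''; that is exactly the step you reject, since such a subset need not have compact closure contained in $U$, and $b'$ may blow up at points of $\partial U\cap\dom(b)$. However, your replacement argument is incomplete precisely where you say it is: Definition \bref{def:BregmanDiv}\beqref{BregmanDef:|x-y_i|=0==>B(x,y_i)=0} for $x\in\dom(b)\setminus U$ is only sketched, and the sketch cannot be completed, because the second assertion is in fact false as stated. Your own example witnesses this. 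Take $X=\R^2$ Euclidean and $b(p):=-\sqrt{1-\|p\|^2}$ on the closed unit ball, $+\infty$ outside. Then $b$ is convex and lower semicontinuous on $X$, continuous on $\dom(b)$, continuously differentiable on $U$, and strongly convex on $\dom(b)$ with parameter $1$: its Hessian on $U$ is $(1-\|p\|^2)^{-1/2}I+(1-\|p\|^2)^{-3/2}pp^{T}\succeq I$, so Proposition \bref{prop:BregmanStronglyConvex}\beqref{item:eta_S} applies with $S=U$. Thus all hypotheses of the second assertion hold. Yet for $\|x\|=1$ and $y\in U$ one computes $B(x,y)=(1-\langle x,y\rangle)/\sqrt{1-\|y\|^2}$, and choosing $y_i:=(1-s_i)(\cos\alpha_i,\sin\alpha_i)$ with $x=(1,0)$, $s_i\to 0^{+}$ and $\alpha_i:=s_i^{1/4}$ gives $\|x-y_i\|\to 0$ while $1-\langle x,y_i\rangle\sim \tfrac{1}{2}\alpha_i^2+s_i\sim\tfrac{1}{2}\sqrt{s_i}$ and $\sqrt{1-\|y_i\|^2}\sim\sqrt{2s_i}$, whence $B(x,y_i)\to 1/(2\sqrt{2})\neq 0$. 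So Definition \bref{def:BregmanDiv}\beqref{BregmanDef:|x-y_i|=0==>B(x,y_i)=0} fails, and neither the paper's compactness reduction nor your proposed supporting-hyperplane/uniform-convexity estimate can close the gap; the second assertion requires an extra hypothesis (for instance, the uniform continuity of $b'$ on bounded convex subsets of $U$ that the first assertion already assumes, or $\dom(b)=U$).
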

\begin{proof}
The first part is a consequence of Corollary \bref{cor:BregmanFunction-b'-is-unicont} and Proposition \bref{prop:BregmanProperties}\beqref{BregProp:LevelSetBounded} because the modulus of uniform convexity $\psi_{b,X}$ of $b$ on $X$ satisfies $\lim_{t\to\infty}\psi(t)=\infty$ as proved in Lemma \bref{lem:MonotonePsi}. The second part  is a particular case of the first part because the strong and weak topologies coincide when the dimension is finite (and so do the strong and weak$^*$ topologies on the dual space) and any function which is continuous on all bounded and convex subsets of $U$ is, because of compactness, uniformly continuous on each such subset. The third part is a consequence of the second one in the particular case where $U=X$.
\end{proof}

\section{Strong convexity and weak-to-weak$^*$ sequential continuity}\label{sec:StrongConvexityWeak-to-weak*} 
In this section we discuss a few sufficient conditions which are related to some parts of Proposition \bref{prop:BregmanProperties}. More precisely,  in Subsection \bref{subsec:StrongConvexity} below we discuss issues related to strong convexity, and in Subsection \bref{subsec:weak-to-weak*} we discuss issues related to weak-to-weak$^*$ sequential continuity. 

\subsection{Strong convexity}\label{subsec:StrongConvexity}
In this subsection we present simple sufficient conditions for the strong convexity of a function. 
When combined with Remark \bref{rem:TypesOfConvexity}\beqref{item:mu[x,y]}, they allow 
one to construct uniformly convex (or relatively uniformly convex) functions and hence to obtain functions  satisfying important sufficient conditions needed in Proposition \bref{prop:BregmanProperties} and the corollaries coming after it. We note that 
Proposition \bref{prop:BregmanStronglyConvex}\beqref{item:eta_S} below is known in the literature in  different settings, for instance in finite-dimensional Euclidean spaces (see, for example, \cite[Theorem 2.1.11, p. 66]{Nesterov2004book}). 

\begin{prop}\label{prop:BregmanStronglyConvex}
Let $X\neq\{0\}$ be a real normed space with a norm $\|\cdot\|$. Let $b:X\to(-\infty,\infty]$ and assume that $U:=\Int(\dom(b))\neq\emptyset$. Suppose further that $b$ is continuous on $\dom(b)$ and twice continuously Fr\'echet differentiable in $U$. Then the following statements hold: 
\begin{enumerate}[(I)]
\item\label{item:eta_xy} Given $x,y\in U$, suppose that $b$ has a strictly positive definite Hessian on $[x,y]$ in the sense that 
\begin{equation}\label{eq:b''xy}
\eta[x,y]:=\inf\{b''(z)(w,w): z\in [x,y],w\in X,\|w\|=1\}>0.
\end{equation}
Then $b$ is strongly convex on $[x,y]$ with a strong convexity parameter $\mu:=\eta[x,y]$. 

\item\label{item:eta_S} Given a nonempty and convex subset $S$ of $U$, if $b$ has a strictly positive definite Hessian on $S$ in the sense that 
\begin{equation}\label{eq:b''}
\eta[S]:=\inf\{b''(z)(w,w): z\in S,w\in X,\|w\|=1\}>0, 
\end{equation}
then $b$ is strongly convex on $S$ with a strong convexity parameter $\mu:=\eta[S]$. Moreover,  $b$ is strongly convex on $\cl{S}\cap\dom(b)$ with the same  strong convexity parameter $\eta[S]$.
\item\label{item:StronglyConvex-clU} If \beqref{eq:b''} holds for each nonempty, bounded, and convex subset $S$ of $U$, then $b$ is strongly convex on these subsets and, moreover, strongly convex also on each nonempty, bounded and convex subset of $\dom(b)$. 
\end{enumerate}
\end{prop}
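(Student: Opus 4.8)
The plan is to prove the three parts in order, reducing everything to a one-dimensional second-derivative computation and then bootstrapping from line segments to convex sets and finally to the closure. For Part \beqref{item:eta_xy}, I would fix $u,v\in[x,y]$ and set $g(t):=b(tu+(1-t)v)$ for $t\in[0,1]$. Since $U$ is convex we have $[u,v]\subseteq[x,y]\subseteq U$, and as $b$ is twice continuously Fr\'echet differentiable in $U$ the chain rule gives that $g$ is twice continuously differentiable (on an open interval containing $[0,1]$, because the endpoints $u,v$ are interior to $U$) with $g''(t)=b''(tu+(1-t)v)(u-v,u-v)$. Because $tu+(1-t)v\in[x,y]$, the definition \beqref{eq:b''xy} of $\eta[x,y]$ yields $g''(t)\geq\eta[x,y]\|u-v\|^2$ for all $t\in[0,1]$. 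The elementary one-dimensional fact that a twice-differentiable $g$ with $g''\geq m$ satisfies $g(\lambda)+\tfrac12 m\lambda(1-\lambda)\leq\lambda g(1)+(1-\lambda)g(0)$ (proved by observing that $t\mapsto g(t)-\tfrac12 m t^2$ is convex) then gives, with $m=\eta[x,y]\|u-v\|^2$, precisely inequality \beqref{eq:RelativeUniformConvexity} for the gauge $\psi(t)=\tfrac12\eta[x,y]t^2$; the degenerate case $u=v$ is immediate. This is strong convexity on $[x,y]$ in the sense of Definition \bref{def:TypesOfConvexity}\beqref{def:StronglyConvexGlobal}.

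For Part \beqref{item:eta_S}, given $u,v\in S$ the convexity of $S$ gives $[u,v]\subseteq S$, whence $\eta[u,v]\geq\eta[S]>0$; applying Part \beqref{item:eta_xy} to the pair $(u,v)$ and noting that strong convexity with a larger parameter implies strong convexity with the smaller parameter $\eta[S]$ (since $\tfrac12\eta[S]t^2\leq\tfrac12\eta[u,v]t^2$), I obtain the strong convexity inequality at every pair of $S$, i.e. $b$ is strongly convex on $S$ with parameter $\eta[S]$. For the extension to $\cl{S}\cap\dom(b)$, I would take $u,v\in\cl{S}\cap\dom(b)$ together with sequences $u_n,v_n\in S$ satisfying $u_n\to u$ and $v_n\to v$, write the strong convexity inequality for $(u_n,v_n)$, and pass to the limit. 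Here $u$, $v$, their convex combination $\lambda u+(1-\lambda)v$, and all the approximating points lie in the convex set $\dom(b)$, so by continuity of $b$ on $\dom(b)$ the three quantities $b(u_n)$, $b(v_n)$, $b(\lambda u_n+(1-\lambda)v_n)$ converge to their values at $(u,v)$, while $\|u_n-v_n\|\to\|u-v\|$; the limiting inequality is exactly the one wanted, with the same parameter $\eta[S]$.

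For Part \beqref{item:StronglyConvex-clU}, the first assertion is immediate from Part \beqref{item:eta_S}, since the hypothesis furnishes $\eta[S]>0$ for every nonempty, bounded, convex $S\subseteq U$. The substantive step, and the one I expect to be the main obstacle, is the extension to an arbitrary nonempty, bounded, convex $T\subseteq\dom(b)$: such a $T$ may meet the boundary of $\dom(b)$ and so need not be contained in $U$, so Part \beqref{item:eta_S} cannot be applied to $T$ directly. My plan is to shrink $T$ toward a fixed interior point. Since $U\neq\emptyset$, choose $p\in U$ and set $S:=\{(1-\epsilon)t+\epsilon p:\ t\in T,\ \epsilon\in(0,1]\}$. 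Each such point lies on the half-open segment $[p,t)$, which is contained in $U$ by the segment property \cite[Theorem 2.23(b), p. 28]{VanTiel1984book} (as $p\in\Int(\dom(b))=U$ and $t\in\dom(b)$), so $S\subseteq U$; a direct computation of a convex combination of two points of $S$, regrouping the $T$-parts into a single element of $T$, shows $S$ is convex; and $S\subseteq\textnormal{conv}(T\cup\{p\})$ shows $S$ is bounded. Letting $\epsilon\to0^+$ gives $T\subseteq\cl{S}$, hence $T\subseteq\cl{S}\cap\dom(b)$.

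Thus $S$ is a nonempty, bounded, convex subset of $U$, the hypothesis gives $\eta[S]>0$, and the ``moreover'' clause of Part \beqref{item:eta_S} yields strong convexity on $\cl{S}\cap\dom(b)$ with parameter $\eta[S]$; since $T\subseteq\cl{S}\cap\dom(b)$, this is in particular strong convexity on $T$. The only genuinely delicate verifications in the whole argument are the convexity of $S$ and the inclusion $S\subseteq U$, both handled by the two observations just indicated; the remainder is the routine one-dimensional estimate of Part \beqref{item:eta_xy} and the continuity-based limiting argument of Part \beqref{item:eta_S}.
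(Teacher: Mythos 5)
Your proof is correct and follows essentially the same route as the paper's: a one-dimensional second-derivative estimate along segments for Part \beqref{item:eta_xy}, monotonicity of $\eta$ together with a continuity/limiting argument for Part \beqref{item:eta_S}, and shrinking a bounded convex subset of $\dom(b)$ toward a fixed interior point to reduce Part \beqref{item:StronglyConvex-clU} to the previous part. The only cosmetic differences are that you package the one-dimensional estimate as convexity of $t\mapsto g(t)-\tfrac{1}{2}mt^2$ rather than integrating twice to a gradient inequality and combining two instances of it, and that in Part \beqref{item:StronglyConvex-clU} you invoke the ``moreover'' clause of Part \beqref{item:eta_S} instead of repeating the approximation argument.
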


\begin{proof}
\begin{enumerate}[(I)]
\item We first prove Item \beqref{item:eta_xy}. Fix $z_1,z_2\in [x,y]$ and define, for all $t\in [0,1]$, the real function $h(t):=b(tz_1+(1-t)z_2)$. Since $[x,y]\subseteq U$, the chain rule and direct differentiation  show that $h'(t)=\langle b'(tz_1+(1-t)z_2),z_1-z_2\rangle$ and  $h''(t)=b''(tz_1+(1-t)z_2)(z_1-z_2,z_1-z_2)$ for all $t\in [0,1]$, where here we adopted the standard convention of identifying the  second derivative (Hessian) with a bilinear form \cite[p. 23]{AmbrosettiProdi1993book}.  If $z_1\neq z_2$, then  from \beqref{eq:b''xy}, we have 
\begin{equation}\label{eq:x1x2}
b''(z)(z_1-z_2,z_1-z_2)\geq \eta[x,y]\|z_1-z_2\|^2
\end{equation}  
for all $z\in [x,y]$, and, in particular, for $z:=tz_1+(1-t)z_2$. Inequality \beqref{eq:x1x2}  also holds true when $z_1=z_2$, since in this case both sides are equal to 0. Therefore  $h''(t)\geq \eta[x,y]\|z_1-z_2\|^2$ for each $t\in [0,1]$. Since $h''$ is continuous, integration and the fundamental theorem of calculus yield the inequality  $h(t)\geq h(0)+h'(0)t+0.5\eta[x,y]\|z_1-z_2\|^2 t^2$ for all $t\in [0,1]$. Hence, by substituting $t:=1$ and using the fact that $h'(0)=\langle b'(z_2),z_1-z_2\rangle$,  we obtain 
\begin{equation}\label{eq:bx_1x_2}
b(z_1)\geq b(z_2)+\langle b'(z_2),z_1-z_2\rangle+\frac{1}{2}\eta[x,y]\|z_1-z_2\|^2.
\end{equation}
This inequality holds for all $z_1,z_2\in [x,y]$. Hence, by fixing $\lambda\in [0,1]$  and  substituting   $z_1:=x$ and $z_2:=\lambda x+(1-\lambda)y$ in \beqref{eq:bx_1x_2}, we get 
\begin{multline}\label{eq:bx<=y}
b(x)\geq b(\lambda x+(1-\lambda)y)+\langle b'(\lambda x+(1-\lambda)y),(1-\lambda)(x-y)\rangle
+\frac{1}{2}\eta[x,y](1-\lambda)^2\|x-y\|^2.
\end{multline}
Similarly, by substituting   $z_1:=y$ and $z_2:=\lambda x+(1-\lambda)y$ in \beqref{eq:bx_1x_2}, we obtain 
\begin{equation}\label{eq:by<=x}
b(y)\geq b(\lambda x+(1-\lambda)y)+\langle b'(\lambda x+(1-\lambda)y),\lambda(y-x)\rangle+\frac{1}{2}\eta[x,y]\lambda^2\|x-y\|^2.
\end{equation}
By multiplying \beqref{eq:bx<=y} by $\lambda$, multiplying  \beqref{eq:by<=x} by $1-\lambda$, and adding these inequalities, we get  
\begin{equation}\label{eq:eta[x,y]}
\lambda b(x)+(1-\lambda)b(y)\geq b(\lambda x+(1-\lambda)y)+\frac{1}{2}\eta[x,y]\lambda(1-\lambda)\|x-y\|^2.
\end{equation}
In other words, $b$ is indeed strongly convex on $[x,y]$ with a strong convexity parameter  $\mu:=\eta[x,y]$. 

\item From \beqref{eq:b''xy} and \beqref{eq:b''} we have $\eta[S]\leq \eta[x,y]$ for all $x,y\in S$. This  fact and \beqref{eq:eta[x,y]} show that 
\begin{multline}\label{eq:eta_S}
\lambda b(x)+(1-\lambda)b(y)\geq 
b(\lambda x+(1-\lambda)y)+0.5\eta[x,y]\lambda(1-\lambda)\|x-y\|^2\\
\geq b(\lambda x+(1-\lambda)y)+0.5\eta[S]\lambda(1-\lambda)\|x-y\|^2,
\end{multline}
that is, $b$ is strongly convex on $S$ with $\mu:=\eta[S]$ as a strong convexity parameter. 

Now let $x,y\in \cl{S}\cap\dom(b)$ be arbitrary and let $(x_i)_{i=1}^{\infty}$ and $(y_i)_{i=1}^{\infty}$ be two sequences in $S$ which converge to $x$ and $y$,  respectively (of course, $\cl{S}\cap\dom(b)$ is nonempty because it contains $S$). From the previous paragraph we know  that \beqref{eq:RelativeUniformConvexity} holds for all $i\in\N$, where $x_i$ and $y_i$ replace $x$ and $y$, respectively, where $\lambda\in (0,1)$ is arbitrary, where  $S_1:=S$, $S_2:=S$, and where $\psi(t):=\frac{1}{2}\eta[S]t^2$, $t\in [0,\infty)$. By taking the limit $i\to\infty$ and using the convexity of $S$ and the continuity of $b$ on $\dom(b)$ (and hence on $\cl{S}\cap \dom(b)$) and the continuity of the norm, we see that  \beqref{eq:RelativeUniformConvexity} also holds with $x$ and $y$. In other words, $b$ is strongly convex on $\cl{S}$ with $\eta[S]$ as a strong  convexity parameter. 
\item Now we show that $b$ is strongly convex on any nonempty, convex, and bounded subset  of $\dom(b)$. Let  $K$ be such a subset. Fix some $z_*\in U$ and consider the subset $K(z_*)$ which is the union of all half-open line segments of the form $[z_*,z)$, $z\in K$, that is, $K(z_*):=\cup_{z\in K}[z_*,z)$. This is a bounded subset of $X$ since its diameter is bounded by $2\sup\{d(z_*,z): z\in K\}<\infty$ by the triangle inequality and the fact that  $K$ is bounded. Since $U$ is open and convex and since each $z\in K$ satisfies $z\in\cl{U}$, it follows from \cite[Theorem 2.23, p. 28]{VanTiel1984book} that each segment $[z_*,z)$, $z\in K$ is contained in $U$. Hence $K(z_*)\subseteq U$. Let $S$ be the convex hull of $K(z_*)$ (in fact, $S=K(z_*)$, but we will not use this fact). Since $K(z_*)$ is bounded, $S$ is a bounded subset of $U$. Since we assume that \beqref{eq:b''} holds for each nonempty, bounded, and convex subset of $U$, it follows from Part \beqref{item:eta_S} that $b$ is strongly convex on $S$ with some parameter $\mu[S]>0$. 

Fix $x,y\in K$ and $\lambda\in [0,1]$. From the construction of $S$ it follows that there are sequences $(x_i)_{i=1}^{\infty},(y_i)_{i=1}^{\infty}$ in $S$ such that $x=\lim_{i\to\infty}x_i$ and $y=\lim_{i\to\infty}y_i$: we can simply take, say,  $x_i:=(1-(1/i))x+(1/i)z_*$ and $y_i:=(1-(1/i))y+(1/i)z_*$ for each $i\in\N$. Since $b$ is strongly convex on $S$ with the parameter $\mu[S]$, it follows that \beqref{eq:RelativeUniformConvexity} holds with $S_1:=S$, $S_2:=S$, with $\psi(t)=\frac{1}{2}\mu[S]t^2$  as a gauge, with an arbitrary $\lambda\in (0,1)$, and with $x$ and $y$ replaced by $x_i$ and  $y_i$, respectively. By going to the limit $i\to\infty$ in \beqref{eq:RelativeUniformConvexity} and using the continuity on $\dom(b)$ of both $b$ and the norm, we see that \beqref{eq:RelativeUniformConvexity} holds also with $x$ and $y$. Since $x$ and $y$ were arbitrary in $S$ and $\lambda$ was arbitrary in $(0,1)$, it follows that $b$ is strongly convex on $K$ with $\mu[S]$ as a strong convexity parameter. 

\end{enumerate}
\end{proof}

\begin{remark}
It was essentially observed in \cite[Theorem 1 and its proof]{Araujo1988jour} that  if $X\neq \{0\}$ is a real Banach space, and  there is some function $b:U\to\R$ which is twice continuously Fr\'echet   differentiable on a nonempty and open subset $U\subseteq X$ and satisfies \beqref{eq:b''} at some $z\in U$, then $X$ is Hilbertian. However, very few details were given in \cite{Araujo1988jour} regarding this claim. In fact, a more general claim holds: if $X\neq \{0\}$ is a real normed space with a norm $\|\cdot\|$ and there is a bounded bilinear form $M:X^2\to\R$ which has the properties that it is symmetric (that is, $M(x,y)=M(y,x)$ for all $(x,y)\in X^2$) and for some $\eta>0$ and all unit vectors $x\in X$ we have $M(x,x)\geq \eta$ (we refer to this latter condition as the ``coercivity assumption''), then $M$ induces an inner product on $X$ and $g(x):=\sqrt{M(x,x)}$, $x\in X$, is a well-defined norm on $X$ which is equivalent to the original norm $\|\cdot\|$. We sketch the proof of this claim in the next paragraph. Thus, if the normed space $X$ in Proposition \bref{prop:BregmanStronglyConvex} is a Banach space, then by taking $M:=b''(z)$ and using the well-known fact that a continuous bilinear form defined on a Banach space is bounded, we conclude that $X$ is Hilbertian. 

To see that $\langle x,y\rangle:=M(x,y)$, $(x,y)\in X^2$, is an inner product on $X^2$, we observe that linearity in each component is a  consequence of the bilinearity of $M$, and $\langle\cdot,\cdot\rangle$ is symmetric because $M$ is symmetric. In addition, the coercivity assumption on $M$ implies that $\langle x,x\rangle=M(x,x)\geq \eta \|x\|^2\geq 0$ and hence $\langle\cdot,\cdot\rangle$ is nonnegative, and if $\langle x,x\rangle=0$, then $\eta\|x\|^2=0$, hence $x=0$. The bilinearity of $M$ also implies that $\langle 0,0\rangle=M(0,0)=0$. We conclude from the previous lines that $\langle\cdot,\cdot\rangle$ is an inner product. Thus the definition of $g$ and the basic theory of inner products (such as the Cauchy-Schwarz inequality)  imply that $g$ is a norm which is induced by an inner product. It remains to see that $g$ is equivalent to $\|\cdot\|$. Indeed, since $M$ is bounded, $|M(x,y)|\leq \|M\|\|x\|\|y\|$ for all $(x,y)\in X^2$, and therefore $g(x)\leq \sqrt{\|M\|}\|x\|$ for each $x\in X$. This inequality proves the first part of the equivalent norm claim, since we have $\|M\|>0$ (otherwise $M\equiv 0$, a contradiction to the coercivity assumption $M(x,x)\geq \eta>0$ for every unit vector $x\in X$). On the other hand, the coercivity assumption on $M$ and the definition of $g$ imply  that $\sqrt{\eta}\|x\|\leq g(x)$ for each $x\in X$. Consequently, $g$ is equivalent to $\|\cdot\|$. 
\end{remark}

\begin{prop}\label{prop:StronglyConvexSum}
Let $I\neq \emptyset$ be either a finite or a countable set.  Let $((X_i,\|\cdot\|_{X_i}))_{i\in I}$ be a sequence of normed spaces. For each $i\in I$, suppose that $b_i:X_i\to (-\infty,\infty]$ is strongly convex on some nonempty subset $S_i$ of $C_i:=\dom(b_i)$ with a strong convexity parameter $\mu_i$. Assume also that $\mu:=\inf\{\mu_i: i\in I\}>0$. Define $X:=\{(x_i)_{i\in I}: x_i\in X_i\,\forall i\in I,\, \sum_{i\in I}\|x_i\|_{X_i}^2<\infty\}$ and endow $X$ with an arbitrary norm $\|\cdot\|$ which is semi-equivalent to the norm  $\|(x_i)_{i\in I}\|_{\#}:=\sqrt{\sum_{i\in I}\|x_i\|^2_{X_i}}$, $(x_i)_{i\in I}\in X$, in the sense that there exists $c>0$ such that  $\|(x_i)_{i\in I}\|_{\#}\geq c \|(x_i)_{i\in I}\|$ for each $(x_i)_{i\in I}\in X$. Assume that  $\sum_{i\in I}b_i(x_i)$ is well defined (converges to a real number)  for each $(x_i)_{i\in I}\in C:=\bigoplus_{i\in I}\dom(b_i)$ and let $b:X\to(-\infty,\infty]$ be defined by $b((x_i)_{i\in I}):=\sum_{i\in I}b_i(x_i)$ for each $(x_i)_{i\in I}\in C$ and $b((x_i)_{i\in I}):=\infty$ if $(x_i)_{i\in I}\notin C$. Then $b$ is strongly convex on $S:=\bigoplus_{i\in I}S_i$ with $\mu[S]:=c^2 \mu$ as a strong convexity parameter. 
\end{prop}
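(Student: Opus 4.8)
The plan is to verify directly the defining inequality of strong convexity (Definition \bref{def:TypesOfConvexity}\beqref{def:StronglyConvexGlobal}) for $b$ on $S$ with the gauge $\psi(t):=\frac{1}{2}c^2\mu t^2$, by summing the corresponding coordinatewise inequalities. Fix $x=(x_i)_{i\in I}$ and $y=(y_i)_{i\in I}$ in $S=\bigoplus_{i\in I}S_i$ and fix $\lambda\in(0,1)$; then $x_i,y_i\in S_i$ for every $i\in I$. Since $b_i$ is strongly convex on $S_i$ with parameter $\mu_i$ and since $\mu_i\geq\mu>0$, Definition \bref{def:TypesOfConvexity}\beqref{def:StronglyConvexGlobal} gives, for each $i\in I$,
\begin{equation*}
b_i(\lambda x_i+(1-\lambda)y_i)+\tfrac{1}{2}\mu\,\lambda(1-\lambda)\|x_i-y_i\|_{X_i}^2\leq \lambda b_i(x_i)+(1-\lambda)b_i(y_i).
\end{equation*}

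Next I would sum these inequalities over $i\in I$. To justify this when $I$ is infinite, I first note that each $\dom(b_i)$ is convex (because $b_i$ is convex, as is presupposed in the notion of strong convexity), so the point $z:=\lambda x+(1-\lambda)y$ has $i$-th coordinate $\lambda x_i+(1-\lambda)y_i\in\dom(b_i)$; since $X$ is a linear space, $z\in X$, and therefore $z\in C=\bigoplus_{i\in I}\dom(b_i)$. Consequently all three series $\sum_{i\in I}b_i(x_i)$, $\sum_{i\in I}b_i(y_i)$ and $\sum_{i\in I}b_i(\lambda x_i+(1-\lambda)y_i)$ converge to real numbers by the standing hypothesis on $b$, and they equal $b(x)$, $b(y)$ and $b(z)$, respectively. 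Moreover, since $x-y\in X$, we have $\sum_{i\in I}\|x_i-y_i\|_{X_i}^2=\|x-y\|_{\#}^2<\infty$. Summing over a finite subset of $I$ and then passing to the limit (every series appearing converges), the summed inequality becomes
\begin{equation*}
b(\lambda x+(1-\lambda)y)+\tfrac{1}{2}\mu\,\lambda(1-\lambda)\|x-y\|_{\#}^2\leq \lambda b(x)+(1-\lambda)b(y).
\end{equation*}

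Finally, I would invoke the semi-equivalence of the norms. Applying $\|z\|_{\#}\geq c\|z\|$ with $z:=x-y$ and squaring yields $\|x-y\|_{\#}^2\geq c^2\|x-y\|^2$, so that replacing $\|x-y\|_{\#}^2$ by the smaller quantity $c^2\|x-y\|^2$ on the left-hand side only weakens the inequality:
\begin{equation*}
b(\lambda x+(1-\lambda)y)+\tfrac{1}{2}c^2\mu\,\lambda(1-\lambda)\|x-y\|^2\leq \lambda b(x)+(1-\lambda)b(y).
\end{equation*}
Since $x,y\in S$ and $\lambda\in(0,1)$ were arbitrary, this is precisely the statement that $b$ is strongly convex on $S$ with parameter $\mu[S]=c^2\mu$.

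The only genuinely delicate point is the legitimacy of summing infinitely many inequalities; this is entirely controlled by the hypotheses, which guarantee that the series $\sum_{i\in I}b_i(w_i)$ converges for every $(w_i)_{i\in I}\in C$ and that the $\#$-norm is finite on $X$, so that every series above converges and the term-by-term passage to the limit is valid. The remaining steps (coordinatewise strong convexity, the bound $\mu_i\geq\mu$, and the one-sided norm comparison) are routine.
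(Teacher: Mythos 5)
Your proof is correct and follows essentially the same route as the paper's: sum the coordinatewise strong-convexity inequalities over $i\in I$ (via partial sums when $I$ is infinite) and then apply the one-sided norm comparison $\|x-y\|_{\#}^2\geq c^2\|x-y\|^2$. The only minor difference is in handling the convergence of the quadratic term: you replace $\mu_i$ by $\mu$ at the coordinate level first, so the only series needing justification is $\sum_{i\in I}\|x_i-y_i\|_{X_i}^2=\|x-y\|_{\#}^2<\infty$, whereas the paper keeps $\mu_i$ one step longer and must argue separately that $\sum_{i\in I}\mu_i\|x_i-y_i\|_{X_i}^2$ is finite by bounding it by the convexity defect; your variant is, if anything, slightly cleaner.
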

\begin{proof}
Fix $\lambda\in (0,1)$, $x:=(x_i)_{i\in I}\in S$, and $y:=(y_i)_{i\in I}\in S$. From the definition of $b$, the convexity of each $C_i$, $i\in I$, the convexity of $C$, and the strong convexity of every $b_i$ on $S_i$, $i\in I$, we have 
\begin{multline}\label{eq:bI}
b(\lambda x+(1-\lambda)y)=b((\lambda x_i+(1-\lambda)y_i)_{i\in I})=\sum_{i\in I}b_i(\lambda x_i+(1-\lambda)y_i)\\
\leq \sum_{i\in I}\left(\lambda b_i(x_i)+(1-\lambda)b_i(y_i)-\frac{1}{2}\mu_i\lambda(1-\lambda)\|x_i-y_i\|_{X_i}^2\right)\\
=\lambda\sum_{i\in I}b_i(x_i)+(1-\lambda)\sum_{i\in I}b_i(y_i)-\frac{1}{2}\lambda(1-\lambda)\sum_{i\in I}\mu_i\|x_i-y_i\|_{X_i}^2\\
=\lambda b(x)+(1-\lambda)b(y)-\frac{1}{2}\lambda(1-\lambda)\sum_{i\in I}\mu_i\|x_i-y_i\|_{X_i}^2\\
\leq \lambda b(x)+(1-\lambda)b(y)-0.5\lambda(1-\lambda)\mu\sum_{i\in I}\|x_i-y_i\|_{X_i}^2\\
=\lambda b(x)+(1-\lambda)b(y)-\frac{1}{2}\lambda(1-\lambda)\mu\|x-y\|_{\#}^2\\
\leq \lambda b(x)+(1-\lambda)b(y)-\frac{1}{2}\lambda(1-\lambda)\mu c^2\|x-y\|^2.
\end{multline}
The above inequality proves the assertion up to clarifying the small issue related to the convergence of the series which appear in \beqref{eq:bI} when $I$ is infinite. Most of these series are nothing but the values of $b$ at some points, and hence they converge. The only doubt is regarding the series $\sum_{i\in I}\mu_i\|x_i-y_i\|_{X_i}^2$. This series converges absolutely  to either a nonnegative number or to infinity. However, the sum cannot be infinity because it is bounded from above by $(\lambda b(x)+(1-\lambda)b(y)-b(\lambda x+(1-\lambda)y))/(0.5\lambda(1-\lambda))$ as follows from repeating the analysis of \beqref{eq:bI} with partial sums and taking their size to infinity (namely, one works with $I_n$  instead of $I$, where $(I_n)_{n=1}^{\infty}$ is an increasing family of finite subsets of $I$ satisfying  $I=\cup_{n=1}^{\infty}I_n$, and then takes $n$ to infinity). 
\end{proof}

\subsection{weak-to-weak$^*$ sequential continuity}\label{subsec:weak-to-weak*}
We finish this section with the following proposition which describes a sufficient condition for a mapping to be weak-to-weak$^*$ sequentially continuous, hence helping in establishing examples of functions satisfying Proposition \bref{prop:BregmanProperties}\beqref{BregProp:B(x,y)=B(x,y_i)-B(y,y_i)}. See also Remark \bref{rem:SchauderBasis} following this proposition for examples of corresponding Banach spaces satisfying the conditions mentioned in Proposition \bref{prop:weak-to-weak*}. Before formulating this proposition  we need a short  discussion and a definition. Recall that a Schauder basis of a real  infinite-dimensional Banach space $(X,\|\cdot\|)$ is a sequence $(e_k)_{k=1}^{\infty}$ of elements in $X$ having the property that each $x\in X$ can be represented uniquely as a countable linear combination of the basis, namely, for each $x\in X$, there exists a unique sequence $(x(k))_{k=1}^{\infty}$ of real numbers (the coordinates of $x$)  such that $x=\sum_{k=1}^{\infty}x(k) e_k$. A standard (algebraic) basis in a finite-dimensional space can also be regarded as a Schauder basis. 

\begin{defin}\label{def:FiniteContinuousL2}
Let $(X,\|\cdot\|)$ be a real Banach space which has a Schauder basis $(e_k)_{k\in\N}$ 
(or $(e_k)_{k=1}^n$ when $\dim(X)=n\in\N$).  
\begin{enumerate}[(a)]
\item 
Given a nonempty subset  $U\subseteq X$ and a function $g:U\to \R$, we say that $g$ depends continuously on finitely many components if there exists a nonempty finite subset of indices $I\subset \N$ and a  continuous function $\tilde{g}:\Pi(U)\to\R$ such that $g(x)=\tilde{g}(\Pi(x))$  for all $x=\sum_{k=1}^{\infty}x(k) e_k\in U$, where $\Pi:X\to X$ is the function defined by  $\Pi(x):=\sum_{k\in I}x(k) e_k$ for all $x\in X$, that is, $\Pi$ is a linear projection from $X$ onto the finite-dimensional normed subspace $S_I:=\textnormal{span}\{e_k: k\in I\}$. Here the norm on $S_I$ (and hence on $\Pi(U):=\{\Pi(u): u\in U\}$) is induced by the norm of $X$. 
\item We say that a sequence $(x_i)_{i=1}^{\infty}$ in $X$ converges component-wise if  $\lim_{i\to\infty}x_{i}(k)$ exists (as a real number) for each $k\in\N$, where $x_{i}(k)$ is the $k$-th coordinate of $x_i$ in its representation by the given Schauder basis. 
\end{enumerate}
\end{defin}
\begin{defin}
We say that a (real or complex) Banach space $(X,\|\cdot\|)$ has the component-* property if  both $X$ and its dual $X^*$ have Schauder bases, and each sequence in $X^*$ which is bounded and converges component-wise also converges in the weak$^*$ topology.
\end{defin}

\begin{prop}\label{prop:weak-to-weak*} Let $(X,\|\cdot\|)$ be a real Banach space with a dual $(X^*,\|\cdot\|_*)$. Suppose that $X$ has the component-* property. Denote by $(e_k)_{k\in\N}$ and  $(f_j)_{j\in\N}$ (or $(e_k)_{k=1}^n$ and $(f_j)_{j=1}^n$  when $\dim(X)=n\in\N$) the Schauder bases of $X$ and $X^*$, respectively.  Given a nonempty subset  $U\subseteq X$, suppose that a function $h:U\to X^*$, which has the form $h(x)=\sum_{j=1}^{\infty}h_j(x)f_j$, $x\in U$, maps every bounded sequence in $U$ to a bounded sequence in $X^*$. Suppose also that for each $j\in\N$, the $j$-th functional coordinate  $h_j:U\to\R$ in the Schauder basis representation of $h$ depends continuously on finitely many components. Then $h$ is weak-to-weak$^*$ sequentially continuous on $U$. 
\end{prop}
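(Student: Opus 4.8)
The plan is to unwind the definition of weak-to-weak$^*$ sequential continuity and reduce the assertion to the component-* property. Fix $x\in U$ and a sequence $(x_i)_{i=1}^{\infty}$ in $U$ converging weakly to $x$; the goal is to show that $\lim_{i\to\infty}\langle h(x_i),z\rangle=\langle h(x),z\rangle$ for every $z\in X$. Two standard facts about weak convergence enter at the outset. First, by the uniform boundedness principle a weakly convergent sequence is norm-bounded, so $(x_i)_{i=1}^{\infty}$ is bounded. Second, for a Schauder basis the $k$-th coordinate functional $x\mapsto x(k)$ is a continuous linear functional on $X$ (because the basis projections are uniformly bounded), hence lies in $X^*$; therefore weak convergence forces component-wise convergence, namely $\lim_{i\to\infty}x_i(k)=x(k)$ for every $k\in\N$.

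First I would show that $(h(x_i))_{i=1}^{\infty}$ converges component-wise to $h(x)$ in $X^*$. Fix $j\in\N$ and let $I_j\subset\N$ be the finite index set and $\tilde{h}_j$ the continuous map provided by the hypothesis that $h_j$ depends continuously on finitely many components, so that $h_j(u)=\tilde{h}_j(\Pi(u))$ with $\Pi(u):=\sum_{k\in I_j}u(k)e_k$. Since $I_j$ is finite and every coordinate converges, $\|\Pi(x_i)-\Pi(x)\|\leq\sum_{k\in I_j}|x_i(k)-x(k)|\,\|e_k\|\to 0$, so $\Pi(x_i)\to\Pi(x)$ in norm, and continuity of $\tilde{h}_j$ yields $h_j(x_i)\to h_j(x)$. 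As $h(x_i)=\sum_{j}h_j(x_i)f_j$ and $h(x)=\sum_{j}h_j(x)f_j$ are the (unique) Schauder expansions, the numbers $h_j(x_i)$ and $h_j(x)$ are precisely the $j$-th coordinates, so indeed $(h(x_i))_{i=1}^{\infty}$ converges component-wise to $h(x)$. Moreover, since $(x_i)_{i=1}^{\infty}$ is bounded and $h$ maps bounded sequences to bounded sequences, $(h(x_i))_{i=1}^{\infty}$ is bounded in $X^*$.

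At this point the component-* property applies verbatim: a bounded sequence in $X^*$ that converges component-wise converges in the weak$^*$ topology. Hence $(h(x_i))_{i=1}^{\infty}$ converges weak$^*$ to some $\psi\in X^*$, so $\lim_{i\to\infty}\langle h(x_i),z\rangle=\langle\psi,z\rangle$ exists for every $z\in X$. The remaining task, and the step I expect to be the main obstacle, is to identify $\psi$ with $h(x)$: the component-* property delivers weak$^*$ convergence but does not, on its face, certify that the limit is the element carrying the component-wise limits. To close this gap I would verify that the weak$^*$ limit of a bounded sequence agrees with its component-wise limit, i.e. that the $k$-th coordinate of $\psi$ equals $\lim_{i\to\infty}h_k(x_i)=h_k(x)$, the $k$-th coordinate of $h(x)$; this uses that the relevant coordinate functionals are realized through the pairing with $X$, so that weak$^*$ convergence entails coordinate convergence. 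Uniqueness of the Schauder expansion then gives $\psi=h(x)$, whence $\lim_{i\to\infty}\langle h(x_i),z\rangle=\langle h(x),z\rangle$ for all $z\in X$, which is exactly weak-to-weak$^*$ sequential continuity at $x$; as $x\in U$ was arbitrary, $h$ is weak-to-weak$^*$ sequentially continuous on $U$. In the finite-dimensional case all the topologies involved coincide and the conclusion is immediate.
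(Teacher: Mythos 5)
Your proof is correct and follows essentially the same route as the paper's: reduce to showing that $(h(x_i))_{i=1}^{\infty}$ is bounded and converges component-wise (boundedness from weak convergence plus the mapping hypothesis; component-wise convergence from the continuity of the coordinate functionals of $(e_k)_{k\in\N}$ together with the finite-dependence hypothesis), then invoke the component-* property. The one point where you go beyond the paper is the identification step you flag as "the main obstacle": the paper simply asserts that boundedness plus component-wise convergence suffices and leaves implicit that the weak$^*$ limit must be $h(x)$. Your proposed closure --- that weak$^*$ convergence entails coordinate convergence in $X^*$ --- requires the biorthogonal functionals of $(f_j)_{j\in\N}$ to be weak$^*$ continuous, i.e., to come from $X$ under the canonical embedding; this holds in all the examples the paper considers (reflexive spaces such as $\ell_p$) but is not guaranteed by the bare hypotheses. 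A fix that works in full generality is to interleave $(h(x_i))_{i=1}^{\infty}$ with the constant sequence $h(x)$: the interleaved sequence is still bounded and component-wise convergent, so by the component-* property it converges weak$^*$ to a single limit, which must equal $h(x)$ because of the constant subsequence; hence $(h(x_i))_{i=1}^{\infty}$ converges weak$^*$ to $h(x)$.
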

\begin{proof}
The assertion is immediate in the case where $X$ is finite-dimensional, since in this case the weak and strong topologies coincide on $X$, and the weak and weak$^*$ topologies coincide on $X^*$. Hence from now on $X$ is assumed to be infinite-dimensional. 
Suppose that $x_{\infty}\in U$  is the weak limit of some sequence $(x_i)_{i=1}^{\infty}$ where $x_i\in U$ for all $i\in\N$. We need to show that $\lim_{i\to\infty}\langle h(x_i),w\rangle=\langle h(x_{\infty}),w\rangle$ for all $w\in X$. By our assumption on $X^*$  it is sufficient to show that the sequence $(h(x_i))_{i=1}^{\infty}$ is bounded and converges  component-wise. Since $(x_i)_{i=1}^{\infty}$ converges weakly (to $x_{\infty}$), this sequence is bounded \cite[p. 58]{Brezis2011book}, and hence, by our assumption on $h$, we conclude that $(h(x_i))_{i=1}^{\infty}$ is bounded.  

It remains to show component-wise convergence, that is, $\lim_{i\to\infty}h_j(x_i)=h_j(x_{\infty})$ for each $j\in \N$. Since  for each $j\in\N$, we assume that the functional coordinate $h_j$ depends continuously on finitely many components, it follows that for each $j\in\N$, there exists  a nonempty  finite subset $I_j\subset\N$ of indices and a continuous function $\tilde{h}_j:\Pi_j(U)\to\R$ such that $h_j(x)=\tilde{h}_j(\Pi_j(x))$ for all $x\in U$. Fix $\epsilon>0$ and $j\in\N$, and denote $S_j:=\textnormal{span}\{e_k: k\in I_j\}\subseteq X$. Since $\tilde{h}_j$ is continuous at each point of $\Pi_j(U)$, and, in particular, at $\Pi_j(x_{\infty})$, there exists $\delta'_j>0$ such that for all $z$ in the intersection of $\Pi_j(U)$ with the ball $C'_j$ of radius $\delta'_j$ and center $\Pi_j(x_{\infty})$, we have  
\begin{equation}\label{eq:h_j<epsilon}
|\tilde{h}_j(z)-\tilde{h}_j(\Pi_j(x_{\infty}))|<\epsilon. 
\end{equation}
Since all the norms on a finite-dimensional normed space are equivalent \cite[p. 197]{GohbergGoldberg1981book}, so are the max norm defined on $S_j$ by $\|\sum_{k\in I_j}\alpha_k e_k\|_{\infty}:=\max\{|\alpha_k|: k\in I_j\}$ and the norm  of $S_j$ which is induced by the norm of $X$. Thus there exists  $\delta_j>0$ such that the max norm ball $C_j$ of radius $\delta_j$ with center $\Pi_j(x_{\infty})$ is contained in the above-mentioned ball $C'_j$.  Since $\lim_{i\to\infty}x_i=x_{\infty}$ weakly, it follows  that for all $k\in I_j$,  the $k$-th coordinate $x_{i}(k)$ of $x_i$ converges, as $i$  tends to infinity, to the $k$-th coordinate $x_{\infty}(k)$ of $x_{\infty}$ (this is because the linear functional which assigns to each $x\in X$  the $k$-th coordinate in the basis representation of $x$ is continuous \cite[p. 83]{Beauzamy1982book}; in \cite[p. 83]{Beauzamy1982book} it is assumed that the basis is normalized, but the proof in the general case is essentially the same as in the normalized case, where the only essential  difference is that the norm of the $k$-th coordinate functional is bounded above by the constant which appears there divided by the norm of the $k$-th basis vector). 

Since $I_j$ is finite, there is $i_0\in\N$ large enough such that $|x_{i}(k)-x_{\infty}(k)|<\delta_j$ for all $k\in I_j$ and all $i\geq i_0$. This implies that for each $i\geq i_0$, the point $z_i:=\Pi_j(x_i)$, namely, $\sum_{k\in I_j}x_{i}(k)e_k$, is in $C_j\cap \Pi_j(U)$. Since $C_j\subseteq C'_j$, we conclude that $z_i\in C'_j\cap \Pi_j(U)$ for all $i\geq i_0$. By letting $z:=z_i$ in \beqref{eq:h_j<epsilon} and using the equality $h_j(x_i)=\tilde{h}_j(\Pi_j(x_i))=\tilde{h}_j(z_i)$ (this equality is just an immediate consequence of the definition of $z_i$ and the assumptions on $h_j$ and $\tilde{h}_j$ for every $j\in\N$), it follows  that  $|h_j(x_i)-h_j(x_{\infty})|=|\tilde{h}_j(z_i)-\tilde{h}_j(\Pi_j(x_{\infty}))|<\epsilon$ for each $i\geq i_0$. In other words,  $\lim_{i\to\infty}h_j(x_i)=h_j(x_{\infty})$, as required.
\end{proof}

\begin{remark}\label{rem:SchauderBasis}
As a result of Proposition \bref{prop:weak-to-weak*}, it is of interest to provide some examples of Banach spaces which have the component-* property. Immediate examples are all finite-dimensional Banach spaces. Below we provide an infinite-dimensional example. More precisely, we claim that any Banach space $(X,\|\cdot\|)$ which is isomorphic to $(\ell_p,\|\cdot\|_p)$  for some $p\in (1,\infty)$ has the component-* property. To see this, we first recall  the well-known fact that if $(Y_1,\|\cdot\|_{Y_1})$ and $(Y_2,\|\cdot\|_{Y_2})$ are isomorphic Banach spaces, that is, there is a continuous and invertible linear operator $A:Y_1\to Y_2$, then $A^{-1}$ is continuous too, as a consequence of the open mapping theorem, and their duals $(Y_1^*,\|\cdot\|_{Y_1^*})$ and $(Y_2,\|\cdot\|_{Y_2^*})$ are isomorphic too via the adjoint operator $A^*:Y_2^*\to Y_1^*$: see, for instance, \cite[pp. 478-479]{DunfordSchwartz1958book}.

 Now, if $(Y_1, \|\cdot\|)_{Y_1}$ has a Schauder basis $(e_k)_{k\in\N}$, then a simple verification shows that $(Ae_k)_{k\in\N}$ is a Schauder basis in $(Y_2,\|\cdot\|_{Y_2})$, and moreover, the coordinates of $z\in Y_2$ with respect to $(Ae_k)_{k\in\N}$ are the same as the coordinates of $A^{-1}z$ with respect to $(e_k)_{k\in\N}$.  This implies that a sequence  $(z_i)_{i\in\N}$ in $Y_2$ converges component-wise, i.e., $\lim_{i\to\infty}z_i(k)$ exists for each $k\in\N$, if and only if $(A^{-1}z_i)_{i\in\N}$ converges component-wise in $Y_1$. Furthermore, $(z_i)_{i\in\N}$ is bounded in $Y_2$ if and only if $(A^{-1}z_i)_{i\in\N}$ is  bounded in $Y_1$. In addition, $(z_i)_{i\in\N}$ converges weakly (to, say, $z$) if and only if $(A^{-1}z_i)_{i\in\N}$ converges weakly (to $A^{-1}z$). (Indeed, if $\lim_{i\to\infty}\langle f,z_i\rangle=\langle f,z\rangle$ for each $f\in Y_2^*$, then given $g\in Y_1^*$, we have, using the property of the adjoint operator, that $\langle g,A^{-1}z_i\rangle=\langle (A^{-1})^*g, z_i\rangle\xrightarrow[i\to\infty]{}\langle (A^{-1})^*g,z\rangle=\langle g, A^{-1}z\rangle$, as required.) As a result, if $Y_1$ has the property that  each sequence in it which is bounded and converges component-wise also converges weakly, then $Y_2$ has this property too. This implication holds for any isomorphic Banach spaces. 

It is a known fact that each of the $\ell_p$ spaces, $1<p<\infty$, has the property that each  sequence in it which is bounded and converges component-wise also converges weakly \cite[p. 339]{DunfordSchwartz1958book}. Since the dual of $\ell_p$ is isometric to  $\ell_q$, where $q=p/(p-1)$, we conclude  from the previous paragraph that any sequence in $\ell_p^*$ which is bounded and converges component-wise (according to the canonical Schauder basis of $\ell_p^*\cong\ell_q$) also converges weakly. But the weak topology on $\ell_p^*$ coincides  with the weak$^*$ topology on it since $\ell_p$ and its dual are reflexive Banach spaces for each $p\in (1,\infty)$. It follows that $\ell_p$ has the component-* property. Since we assume that $(X,\|\cdot\|)$ is isomorphic to $(\ell_p,\|\cdot\|_{p})$, and we know that  $(X^*,\|\cdot\|_*)$ is isomorphic to $(\ell_p)^*$, we can conclude from previous lines that both $X$ and $X^*$ have Schauder bases and each sequence in $X^*$ which is bounded and converges component-wise also converges in the weak$^*$ topology, as required.  

An additional example of a Banach space which has the component-* property is any space which is isomorphic to $\oplus_{i=1}^m \ell_{p_i}$, where  $p_1\ldots,p_m\in (1,\infty)$ are given, $2\leq m\in\N$, and the norm on the direct sum $\oplus_{i=1}^m \ell_{p_i}$ is, say, Euclidean, or, more generally, an $\ell_p$ norm, $p\in (1,\infty)$. 
\end{remark}

\section{The negative Boltzmann-Gibbs-Shannon entropy (the Wiener entropy)}\label{sec:GibbsShannon}
Starting from this section we discuss various examples (new or old) of concrete Bregman functions and divergences, and investigate their properties. In this section we focus on the  negative Boltzmann-Gibbs-Shannon entropy. 
\subsection{Background} 
Let $X:=\R^n$, $n\in\N$, with an arbitrary norm $\|\cdot\|$.  Consider the positive orthant $U:=(0,\infty)^n$ and let $b:X\to(-\infty,\infty]$ be the negative Boltzmann-Gibbs-Shannon entropy function defined by 
\begin{equation}\label{eq:Shannon}
b(x):=\left\{\begin{array}{l}
\displaystyle{\sum_{k=1}^n}x_k\log(x_k),\quad x=(x_k)_{k=1}^n\in \cl{U},\\
\infty, \quad \textnormal{otherwise}.
\end{array}
\right.
\end{equation}
Here $\log$ is the natural logarithm ($b$ has similar properties also if $\log$ is any other logarithm) and $0\log(0):=0$. The induced Bregman divergence $B:X^2\to (-\infty,\infty]$ is the Kullback-Leibler divergence 
\begin{equation}\label{eq:BregmanBGS}
B(x,y)=\left\{\begin{array}{l}
\displaystyle{\sum_{k=1}^n}x_k\left(\log\left(\frac{x_k}{y_k}\right)-1\right)+\sum_{k=1}^n y_k, \quad \forall (x,y)\in \cl{U}\times U,\\
\infty, \quad \textnormal{otherwise}.
\end{array}
\right.
\end{equation}
Following the work of Gibbs in the 19th century \cite{Gibbs1874-1878jour} (which is based on earlier works of Boltzmann), the negative of $b$ is called ``the Gibbs entropy'' or ``the Boltzmann-Gibbs entropy'' in statistical mechanics and thermodynamics. Following the 1948 paper \cite{Shannon1948jour} of Shannon, $-b$ is also called ``the Shannon entropy'' in information theory. It appears in  numerous places in the literature, where it sometimes takes the form $-K\sum_{k=1}^n x_k\log(x_k)$ for some positive constant $K$ (which is frequently normalized to 1 or to $1/\log(2)$). A continuous version of $b$ itself,  (not of $-b$), namely where the sum is replaced by an integral, was introduced independently in the 1948 book of Wiener \cite{Wiener1948book}, and hence a possible name for $b$ can be ``the Wiener entropy''. 

In the context of Bregman divergences, $b$ appears already in the original work of Bregman \cite{Bregman1967jour} (in the form given in \beqref{eq:Shannon}), is frequently called ``the negative Shannon entropy'' or ``the information entropy'' or the ``$x\log(x)$ entropy'' or the ``kernel entropy'', and sometimes it has the slightly modified form $b(x)=\sum_{k=1}^n x_k(\log(x_k)-1)$ (all the properties of $b$ mentioned above and below remain the same despite this linear deformation). Perhaps the first place in which many of its Bregmanian properties have been proved formally is  \cite[Lemma 5]{CensorDePierroElfvingHermanIusem1990incol}. A continuous version of the divergence $B$ (where the sum is replaced by an integral) appears in \cite{KullbackLeibler1951jour} in the context of statistics and information theory, where it is also assumed there that both $x$ and  $y$ are probability density vectors, namely they are positive and their integrals over the measure space are equal to 1 (of course, in the discrete case the latter assumption means that $\sum_{k=1}^nx_k=1=\sum_{k=1}^ny_k$). Kullback and Leibler called their divergence ``the mean information discrimination'' \cite[p. 80]{KullbackLeibler1951jour}. 

As far as we know, both $b$ and $-b$ have been considered in Euclidean  spaces and not in other normed spaces. Below we present some classical properties of $b$ and also shed some new light on it, mainly regarding  strong and relative uniform convexity.

\subsection{Basic properties}\label{subsec:GibbsShannonBasicProperties} 
A simple verification shows that $b'(z)=(\log(z_k)+1)_{k=1}^n$ and also that 
$b''(z)(w,w)=\sum_{k=1}^n(1/z_k)w_k^2$ for every $z\in U$ and every vector $w\in X$. In particular, $b$ is essentially smooth (this is clear if the norm is Euclidean and hence true for our arbitrary norm since all the norms on $\R^n$  are equivalent). Since $b(z)=\sum_{k=1}^nb_k(z_k)$ for all $z\in\dom(b)$, where $b_k:[0,\infty)\to\R$ is defined by $b_k(z_k):=z_k\log(z_k)$ for every $k\in\{1,\ldots,n\}$ and $z_k\in [0,\infty)$, and since $b_k$ is strictly convex on $[0,\infty)$ (strict convexity on $[y_k,z_k]\subset (0,\infty)$ is clear since $b_k''$ is positive there; for $y_k:=0$ and $z_k>0$ one observes that $b_k(\lambda z_k+(1-\lambda)y_k)<\lambda b_k(z_k)$ whenever $\lambda\in (0,1)$ since $\log$ is strictly increasing on $(0,\infty)$), it follows that $b$ is strictly convex on $\dom(b)$ (in fact, as shown in Subsection \bref{subsec:GibbsShannonStronglyConvex} below, $b$ is strongly convex on nonempty, bounded and convex subsets of $\dom(b)$). Hence $b$ is Legendre.  In addition, both derivatives  of $b$ are  continuous on $U$ and $b$ is continuous on $\dom(b)$.  Since $\dom(b)=\cl{U}$ is closed and $b$ is continuous on $\cl{U}$, a simple verification shows that for all $\gamma\in\R$, the $\gamma$-level set $\{x\in X: b(x)\leq\gamma\}$ of $b$ coincides with $\{x\in \cl{U}: b(x)\leq\gamma\}$ and is closed. Therefore $b$ is lower semicontinuous on $X$. 

\subsection{Strong convexity}\label{subsec:GibbsShannonStronglyConvex} 
Since all the norms on a finite-dimensional space are equivalent, there are $c_2>0$ and $c_{\infty}>0$ such that 
\begin{equation}\label{eq:EquivalentNorms}
c_2\|v\|\leq \|v\|_2\,\,\textnormal{and}\,\, 
\|v\|_{\infty}\leq c_{\infty}\|v\|\quad \forall v\in X, 
\end{equation}
where $\|v\|_2:=\sqrt{\sum_{i=1}^n v_i^2}$ and $\|v\|_{\infty}:=\max\{|v_i|: i\in \{1,\ldots,n\}\}$. Given $x,y\in U$, $x\neq y$, let $z=(z_i)_{i=1}^n\in [x,y]$. In particular, all the components of $x,y,z$ are positive and for each $i\in \{1,\ldots,n\}$, either $x_i\leq z_i\leq y_i$ or $y_i\leq z_i\leq x_i$. Thus $\|z\|_{\infty}\leq \max\{\|x\|_{\infty},\|y\|_{\infty}\}$.  The above facts and \beqref{eq:EquivalentNorms} imply that for every unit vector $w$, one has
\begin{multline}\label{eq:W}
b''(z)(w,w)=\sum_{i=1}^n \frac{w_i^2}{z_i}\geq \sum_{i=1}^n \frac{w_i^2}{\|z\|_{\infty}}=\frac{\|w\|_2^2}{\|z\|_{\infty}}\geq 
\frac{c_2^2\|w\|^2}{\max\{\|x\|_{\infty},\|y\|_{\infty}\}}\\
=\frac{c_2^2}{\max\{\|x\|_{\infty},\|y\|_{\infty}\}}\geq \frac{c_2^2}{c_{\infty}\max\{\|x\|,\|y\|\}}. 
\end{multline}
This is true for each $z\in [x,y]$. It follows from Proposition \bref{prop:BregmanStronglyConvex}\beqref{item:eta_xy} that  $\mu_{1}[x,y]:=c_2^2/\max\{\|x\|_{\infty},\|y\|_{\infty}\}$ is a strong convexity parameter of $b$ on $[x,y]$. A smaller but more convenient parameter of strong convexity is $\mu_2[x,y]:=c_2^2/(c_{\infty}\max\{\|x\|,\|y\|\})$, as follows again from \beqref{eq:W}.

 Now let  $S$ be an arbitrary nonempty, bounded and convex subset of  $U$. Then there is $M_S>0$ such that $\|s\|\leq M_S$ for all $s\in S$.  From Proposition  \bref{prop:BregmanStronglyConvex}\beqref{item:eta_S} and the previous paragraphs we conclude that $b$ is  strongly convex  on $S$ with $\mu[S]:=c_2^2/(c_{\infty}M_S)$ as a strong convexity parameter. From Proposition  \bref{prop:BregmanStronglyConvex}\beqref{item:StronglyConvex-clU} it follows that $b$ is strongly convex on all nonempty, convex and bounded subsets of $\cl{U}$ (in particular, this shows in a different way that $b$ is strictly convex on $U$). 
 
\subsection{Relative uniform convexity}\label{subsec:GibbsShannonRelativeUC}
Now fix an arbitrary $x\in \cl{U}$. We show below that $b$ is uniformly convex relative to the pair $(\{x\},U\cap \{w\in X: \|w\|>2\|x\|\})$ with 
\begin{equation}
\psi(t):=\frac{c_2^2}{4c_{\infty}}t, \quad t\in [0,\infty),  
\end{equation}
as a relative gauge. Indeed, given $y\in U$  satisfying $\|y\|>2\|x\|$, the triangle inequality $\|y\|-\|x\|\leq \|x-y\|$ and the above lines imply that 
\begin{equation*}
\psi(\|x-y\|)=\frac{c_2^2\|x-y\|}{4c_{\infty}}< \frac{c_2^2\|x-y\|}{2c_{\infty}}\cdot\left(1-\frac{\|x\|}{\|y\|}\right)
\leq \frac{c_2^2\|x-y\|^2}{2c_{\infty}\|y\|}=\frac{\mu_2[x,y]}{2}\|x-y\|^2.
\end{equation*}
Since we already know from previous paragraphs that $b$ is strongly convex on $[x,y]$ with $\mu_2[x,y]$ as a strong convexity parameter (see Subsection \bref{subsec:GibbsShannonStronglyConvex}), we draw the desired conclusion from Remark  \bref{rem:TypesOfConvexity}\beqref{item:mu[x,y]}.

\subsection{No global uniform convexity}\label{subsec:BGS-NoUniformlyConvex}
We show below that $b$ cannot be uniformly convex on $U$, and hence also on $\dom(b)$ (the case $n=1$ is stated without a proof in \cite[p. 186]{BauschkeCombettes2017book}). Indeed, assume to the contrary that $b$ is uniformly convex on $U$. Given $s>1$, let $x(s)\in U$ and $y(s)\in U$ be defined by $x_1(s):=s$, $y_1(s):=s+1$, and $x_i(s):=1=:y_i(s)$ for all $i\in\{1,\ldots,n\}\backslash\{1\}$.  Since all the norms on $\R^n$ are equivalent, there is $\eta>0$ such that $\|z\|\geq \eta\|z\|_2$ for each $z\in \R^n$, where $\|\cdot\|_2$ is the Euclidean norm. Therefore $\|x(s)-y(s)\|\geq \eta\|x(s)-y(s)\|_2=\eta$. Consequently, Lemma \bref{lem:MonotonePsi} and the fact that $b$ is uniformly convex imply that $\psi_{b,U}(\|x(s)-y(s)\|)\geq\psi_{b,U}(\eta)>0$ for each $s>1$, where  $\psi_{b,U}$ is the modulus of uniform convexity of $b$ on $U$. On the other hand, from Proposition \bref{prop:BregmanProperties}\beqref{BregProp:BregPsiS1S2} and \beqref{eq:BregmanBGS}, together with the well-known relation $\log(1+t)=t+o(t)$, which holds for all $t\in (-1,1)$, we have  
\begin{multline}\label{eq:h(lambda)}
\psi_{b,U}(\|x(s)-y(s)\|)\leq B(x(s),y(s))=x_1(s)\left(\log\left(\frac{x_1(s)}{y_1(s)}\right)-1\right)+y_1(s)\\
=1+s\log\left(\frac{s}{s+1}\right)=s\left(\frac{1}{s}-\log\left(1+\frac{1}{s}\right)\right)=\frac{o\left(\frac{1}{s}\right)}{\frac{1}{s}}{\xrightarrow[s\to \infty]{}}\,\,0.
\end{multline}
In particular,  $\psi_{b,U}(\|x(s)-y(s)\|)<\psi_{b,U}(\eta)$ for $s$ sufficiently large.  Thus we arrive at a contradiction. This shows that $b$ cannot be uniformly convex on $U$. 

\subsection{$b$ is a Bregman function}\label{subsec:BregmanGibbsShannon}
We finish this section by re-establishing the well-known fact that $b$ is a Bregman function. In fact, we show that $b$ has the limiting difference property, that it is sequentially consistent, and that the second type level-sets of $B$ are bounded. Indeed, the latter claim is immediate from Proposition \bref{prop:BregmanProperties}\beqref{BregProp:LevelSet2BoundedSufficientConditions} since $b$ is strictly convex on $\dom(b)$. Definition \bref{def:BregmanDiv}\beqref{BregmanDef:U} and \beqref{BregmanDef:ConvexLSC} are satisfied by the assumptions on $b$ and by what we proved in Subsections \bref{subsec:GibbsShannonBasicProperties}--\bref{subsec:GibbsShannonStronglyConvex}. Definition \bref{def:BregmanDiv}\beqref{BregmanDef:B}  is just the definition of the Bregman divergence $B$. Definition \bref{def:BregmanDiv}\beqref{BregmanDef:BoundedLevelSet} is a consequence of Proposition \bref{prop:BregmanProperties}\beqref{BregProp:LevelSetBounded} and Subsection \bref{subsec:GibbsShannonRelativeUC}. Subsection \bref{subsec:GibbsShannonStronglyConvex} and  Proposition \bref{prop:BregmanProperties}\beqref{BregProp:B(x_i,y_i)=0AndOneSequenceBounded==>|x_i-y_i|=0} show that $b$ is sequentially consistent and also imply that Definition \bref{def:BregmanDiv}\beqref{BregmanDef:B(x_i,y_i)=0==>|x_i-y|=0} holds. The limiting difference property follows from Proposition \bref{prop:BregmanProperties}\beqref{BregProp:B(x,y)=B(x,y_i)-B(y,y_i)} since the space is finite-dimensional. 

Finally, to see that Definition \bref{def:BregmanDiv}\beqref{BregmanDef:|x-y_i|=0==>B(x,y_i)=0} holds, we show that $\lim_{i\to\infty}\langle b'(y_i),x-y_i\rangle=0$ and then use Proposition \bref{prop:BregmanProperties}\beqref{BregProp:D(x,y_i)=0dom(b)}. Given $x=(x_k)_{k=1}^n\in \dom(b)$ and a sequence $(y_i)_{i=1}^{\infty}$ in $U$  which converges to $x$, we have $\lim_{i\to\infty}y_{i,k}=x_k$ for all $k\in\{1,\ldots,n\}$, where $y_{i,k}$ is the $k$-th component of $y_i$. We also have $y_{i,k}>0$ for each $i\in\N$ and $k\in\{1,\ldots,n\}$. Now fix some $k\in\{1,\ldots,n\}$ and consider $x_k$. If $x_k>0$, then the continuity of the function $t\mapsto \log(t)(x_k-t)$  on  $(0,\infty)$ and the fact that it vanishes at $t=x_k$ implies that $\lim_{i\to\infty} \log(y_{i,k})(x_k-y_{i,k})=0$. If $x_k=0$, then from the known limit $\lim_{t\to 0+}\log(t)t=0$  we get $\lim_{i\to\infty} \log(y_{i,k})(x_k-y_{i,k})=0$ again. Since  $\langle b'(y_i),x-y_i\rangle=\sum_{k=1}^n \log(y_{i,k})(x_k-y_{i,k})+\sum_{k=1}^n(x_k-y_{i,k})$, we conclude from the previous lines that indeed $\lim_{i\to\infty}\langle b'(y_i),x-y_i\rangle=0$.

\section{The negative Havrda-Charv\'at-Tsallis entropy}\label{sec:HavrdaCharvatTsallis}
\subsection{Background}
Let $X:=\R^n$ ($n\in\N$) with an arbitrary norm $\|\cdot\|$ and let $U:=(0,\infty)^n$.  Fix $q\in \R\backslash\{0,1\}$.  Define the negative Havrda-Charv\'at-Tsallis entropy by
\begin{equation}\label{eq:Havrda-Charvat-Tsallis}
b(x):=\left\{\begin{array}{lll}
\displaystyle{\frac{1}{q-1}}\displaystyle{\sum_{k=1}^n (x_k^{q}-1)}, & x=(x_k)_{k=1}^n\in \cl{U}\,\,\textnormal{and}\,\,q\in (0,1)\cup(1,\infty),\\
\displaystyle{\frac{1}{1-q}}\displaystyle{\sum_{k=1}^n (x_k^{q}-1)}, & x\in  U\,\,\textnormal{and}\,\,q<0,\\
\infty,& \textnormal{otherwise}.
\end{array}
\right.
\end{equation}

The corresponding (pre-)Bregman divergence is 
\begin{equation}\label{eq:Bregman-Havrda-Charvat-Tsallis}
B(x,y)=\left\{\begin{array}{lll}
\displaystyle{\frac{1}{q-1}}\sum_{k=1}^n \left(x_k^{q}-y_k^{q}-q y_k^{q-1}(x_k-y_k)\right),& (x,y)\in \cl{U}\times U\\
& \textnormal{and}\,\,q\in(0,1)\cup(1,\infty),\\
\displaystyle{\frac{1}{1-q}}\sum_{k=1}^n \left(x_k^{q}-y_k^{q}-q y_k^{q-1}(x_k-y_k)\right),& (x,y)\in U\times U\,\,\textnormal{and}\,\,q<0,\\
\infty,&\textnormal{otherwise}.
\end{array}
\right.
\end{equation}
This entropy can be thought of as being a certain generalization of the negative Boltzmann-Gibbs-Shannon entropy  because \beqref{eq:Shannon} is obtained from \beqref{eq:Havrda-Charvat-Tsallis} in the limit $q\to 1$. The negative of $b$ was introduced by Havrda and Charv\'at \cite[Theorem 1]{HavrdaCharvat1967jour} in the context of information theory (the coefficient of the sum was $2^{q-1}/(2^{q-1}-1)$ instead of $1/(q-1)$, and $q$ was assumed to be positive) and was rediscovered by Tsallis \cite{Tsallis1988jour} in the context of statistical mechanics. The original entropy (that is, $-b$) has applications in various areas of science and engineering , among them thermostatistics \cite{BPT1998jour}, astrophysics \cite{PickupCywinskiPappasFaragoFouquet2009jour},  sensor networks \cite{GaoLiu2014jour}, medical signal processing plasma \cite{LiZhou2016jour}, quantum mechanics \cite{AbeOkamoto2001book}, complex systems \cite{Gell-MannTsallis2004book}, image processing \cite{ManicPriyaRajinikanth2016jour}, just to name a few (we note, however, that in these applications the setting is usually the $n$-dimensional Euclidean space, and $x=(x_i)_{i=1}^n$ is a positive probability vector, namely, $x_i>0$ for each $i$ and $\sum_{i=1}^n x_i=1$; sometimes a continuous version of the entropy is considered, in which an integral replaces the discrete sum). Many more details and applications can be found in the book of Tsallis \cite{Tsallis2009book}. As a matter of fact, it seems that since the pioneering work of Tsallis \cite{Tsallis1988jour} there has been a huge amount of research related to this entropy: indeed, in \cite{TsallisContinuouslyUpdatedList} one can find a continuously updated list of works which are directly related to this entropy, and as of October 2018 this (not exhaustive) list contains not less than 6913 items(!).

However, in the context of Bregman functions, the function $b$ from \beqref{eq:Havrda-Charvat-Tsallis} is rarely considered (but see Subsection \bref{subsec:alpha-beta} below for a somewhat related variation of $b$). In fact, we have seen a very brief (and somewhat implicit) related discussion only in \cite[Appendices A, C]{CichockiAmari2010jour} , \cite[p. 2046]{Csiszar1991jour} (for the case $q<1$), and in \cite[p. 129]{Lafferty1999inproc} (for the case $q\in (0,1)$), where in all of these cases no proofs were given that $b$ satisfies Definition \bref{def:BregmanDiv}; see also \cite[p. 1566]{CichockiAmari2010jour} for a continuous analogue  of $B$, namely when the sum is replaced by an integral. In the convex analysis and optimization literature one can see, in a few places, the closely related versions of \beqref{eq:Havrda-Charvat-Tsallis} defined by   $b_1(x):=\sum_{k=1}^n x_k^{-\delta}$  for fixed $\delta>0$ (see \cite[p. 340]{Cruz-NetoFerreiraIusemMonteiro2007jour}; see also \cite[Example 1, item 3]{Nguyen2017jour} for the slight variation $\tilde{b}_1(x):=(1/p)\sum_{k=1}^n x_k^{-p}$, $p\in [1,\infty)$), $b_2(x):=\sum_{k=1}^n((-1/p)x_k^p+ax_k)$ where $p\in(0,1)$ and $a\in [0,\infty)$ are given (see \cite[Example 2.2, p. 322]{BregmanCensorReich1999jour} for $a>0$ and \cite[Examples 6.3, 7.8]{BauschkeBorwein1997jour} for $a=0$), and also $b_3(x):=(1/(1-\alpha))\sum_{k=1}^n(\alpha x_k-x_k^{\alpha})$ where $\alpha\in(0,1)$ is given (see \cite[Examples 3.1(3), p. 679]{Teboulle1992jour}, \cite[Example 1]{BauschkeBolteTeboulle2017jour}); in all of these cases the interior of the effective domain of the above-mentioned functions is $(0,\infty)^n$. With the exception of  \cite[Examples 6.3, 7.8]{BauschkeBorwein1997jour}, the discussions in all of these cases are very brief and no proof is given that these functions are indeed Bregman functions (in  \cite[Example 6.3, Corollary 5.13]{BauschkeBorwein1997jour} it is shown that $b_2$ is Bregman/Legendre; the notion of a ``Bregman/Legendre function'', which was introduced in \cite{BauschkeBorwein1997jour} and was discussed there thoroughly, is closely related to, but somewhat different from, the notion of a ``Bregman function''). Actually, as we  prove in Subsection \bref{subsubsec:q<0} below, the function $b_1$ mentioned above does not satisfy Definition \bref{def:BregmanDiv} when $\delta>0$. In general, it seems that these variations of \beqref{eq:Havrda-Charvat-Tsallis} are not very well known.

\subsection{Basic properties}\label{subsec:HCT-BasicProperties} For all $q\in \R\backslash\{0,1\}$, $z\in U$ and $w\in X$, one has  $b'(z)=((|q|/(q-1))z_k^{q-1})_{k=1}^n$ and 
$b''(z)(w,w)=\sum_{k=1}^n |q| z_k^{q-2}w_k^2$. Hence, if $q<1$, then $b$ is essentially smooth.  In addition, both derivatives  of $b$ are  continuous on $U$ and $b$ is continuous on $\dom(b)$. A similar reasoning to the one mentioned in Subsection \bref{subsec:GibbsShannonBasicProperties} implies that $b$ is strictly convex on $\dom(b)$ (with the exception of the inequality $b_k(\lambda z_k+(1-\lambda)y_k)<\lambda b_k(z_k)+(1-\lambda)b(y_k)$ when $y_k=0$ and $\lambda\in (0,1)$ and $z_k>0$, but this inequality can easily be shown directly by separating into the cases $1<q$ and $q<1$; in Subsection \bref{subsec:HCT-StrongConv} below we show the stronger result that if $q<2$, then $b$ is even strongly convex on any nonempty, bounded and convex subset of $\dom(b)$). Thus if $q<1$, then $b$ is Legendre. 

When $q>0$, the continuity and convexity of $b$ on $\dom(b)=\cl{U}$ imply that $b$ is lower semicontinuous and convex on $X$ (the explanation is similar to the one given in Subsection \bref{subsec:GibbsShannonBasicProperties}). When $q<0$ the (strict) convexity of $b$ on $\dom(b)=U$ implies that $b$ is convex on $X$. To see that $b$ is also lower semicontinuous on $X$ when $q<0$, one can observe that $\lim_{i\to\infty}b(y_i)=\infty$ whenever $(y_i)_{i=1}^{\infty}$ is a sequence in $U$ having the property that  the distance between $y_i$ and the boundary of $U$ tends to zero as $i$ tends to infinity. This fact implies that for each $\gamma\in\R$, the $\gamma$-level set $\{x\in X: b(x)\leq \gamma\}$ of $b$ must be strictly inside $U$ (that is, its distance to the boundary of $U$ is positive) whenever it is nonempty. This observation and the continuity of $b$ on $U$ imply that each such level set is a closed subset of $X$ and hence $b$ is indeed lower semicontinuous on $X$.

\subsection{Strong convexity}\label{subsec:HCT-StrongConv} 
Assume first that $q=2$. From the previous paragraphs and \beqref{eq:EquivalentNorms} we have  $b''(z)(w,w)=2\|w\|_2^2\geq 2c_2^2$ for each unit vector $w$ and each $z\in U$. Hence from Proposition  \bref{prop:BregmanStronglyConvex}\beqref{item:eta_S} it follows that $b$ is strongly convex  on $\cl{U}$ with $2c_2^2$ as a strong convexity parameter. 

Now suppose that $q\in (-\infty,0)\cup(0,1)\cup(1,2)$ and fix $x,y\in U$, $x\neq y$. We can follow word for word the analysis in Subsection \bref{subsec:GibbsShannonStronglyConvex} (that is, \beqref{eq:W} and the discussion before and after it) to conclude that both 
$\mu_{1}[x,y]:=|q|c_2^2/\max\{\|x\|^{2-q}_{\infty},\|y\|^{2-q}_{\infty}\}$ and  $\mu_{2}[x,y]:=|q|c_2^2/(c_{\infty}^{2-q}\max\{\|x\|^{2-q},\|y\|^{2-q}\})$ are strong convexity parameters of $b$ on $[x,y]$. This shows that $b$ is strongly convex on any nonempty, bounded and convex subset $S$ of $U$ with $\mu[S]:=|q|c_2^2/(c_{\infty} M_S)^{2-q}$ as a strong convexity parameter of $b$ on $S$, where $M_S$ is an upper bound on the norm of vectors from $S$. Hence, from Proposition \bref{prop:BregmanStronglyConvex}\beqref{item:StronglyConvex-clU} it follows that $b$ is strongly convex on any nonempty, bounded and convex subset $S\subseteq\dom(b)$. 

Finally, assume that $q>2$. In this case it is not true that $b$ is strongly convex on all nonempty, bounded and convex subsets of $U$ (see Subsection \bref{subsubsec:NoStrongConv q>2} below), but it is true that $b$ is strongly convex on any nonempty subset $V$ of $U$ the distance of which to the boundary  of $U$ is positive. Indeed, given such a subset $V$, there exists some $\epsilon>0$ such that $z_k\geq \epsilon$ for each $k\in\{1,\ldots,n\}$ and $z=(z_k)_{k=1}^n\in V$. Because 
$q-2>0$ it follows that $b''(z)(w,w)\geq q \epsilon^{q-2}\|w\|_2^2$. Taking into account \beqref{eq:EquivalentNorms} and Proposition  \bref{prop:BregmanStronglyConvex}\beqref{item:eta_S}, we see that $b$ is strongly convex on $V$ with $\mu:=q c_2^2 \epsilon^{q-2}$ as a parameter of strong convexity. 

\subsection{Relative uniform convexity}\label{subsec:HCT-RelativeUniformConv} Because of Proposition \bref{prop:BregmanProperties}\beqref{BregProp:LevelSetBounded} we are interested in uniform convexity relative  to pairs of the form $(\{x\},S_2)$, where $x\in\dom(b)$ and $S_2:=\{w\in X: \|w\|>r_x\}$ for some $r_x>0$. For $q=2$, we already know that $b$ is strongly convex on $\dom(b)$ and hence it is uniformly convex relative to these pairs. For $q>2$, the situation is not clear, but at least in the case where both $x$ and $S_2$ are contained in $U_{\epsilon}:=[\epsilon,\infty)^n$ for some $\epsilon>0$ we know that $b$ is uniformly convex  relative to $(\{x\},S_2(x))$ since we actually know from Subsection  \bref{subsec:HCT-StrongConv} that $b$  is strongly convex on $U_{\epsilon}$. 

Now consider the case $q\in (-\infty,0)\cup(0,1)\cup(1,2)$. We claim that for each $x\in \dom(b)$, the function $b$ is uniformly  convex relative to $(\{x\},U\cap \{w\in X: \|w\|>2\|x\|\})$ with the following  relative gauge: 
\begin{equation}\label{eq:psi_q<2}
\psi(t):=\left\{
\begin{array}{lll}
\displaystyle{\frac{|q|c_2^2}{c_{\infty}^{2-q}2^{3-q}}}t^{q}, & \,\,t\in (0,\infty),\vspace*{0.2cm}\\
0, & t=0.
\end{array}
\right.
\end{equation}
Indeed, fix $x\in \dom(b)$. Given $y\in U$  satisfying $\|y\|>2\|x\|$, the triangle inequality $\|y\|-\|x\|\leq \|x-y\|$ and the previous lines show that 
\begin{multline*}
\psi(\|x-y\|)=\frac{|q|c_2^2\|x-y\|^{q}}{2c_{\infty}^{2-q}}\cdot\frac{1}{2^{2-q}}\leq \frac{|q|c_2^2\|x-y\|^{q}}{2c_{\infty}^{2-q}}\cdot\left(1-\frac{\|x\|}{\|y\|}\right)^{2-q}\\
\leq \frac{|q|c_2^2\|x-y\|^{q}}{2c_{\infty}^{2-q}}\frac{\|x-y\|^{2-q}}{\|y\|^{2-q}}
=\frac{\mu_2[x,y]}{2}\|x-y\|^2.
\end{multline*}
Since we already know from Subsection \bref{subsec:HCT-StrongConv} that $b$ is strongly convex on $[x,y]$ with $\mu_2[x,y]$ as a strong convexity parameter, we obtain the desired conclusion from Remark  \bref{rem:TypesOfConvexity}\beqref{item:mu[x,y]}.

\subsection{No global strong convexity}\label{subsec:NoStrongConv q>0} 
In this subsection we show that $b$ is not strongly convex on $U$ when $q\in (0,\infty)\backslash\{1,2\}$. The analysis is separated into cases, according to the possible values of $q$. 

\subsubsection{\bf $q>2$}\label{subsubsec:NoStrongConv q>2} 
The proof is by way of contradiction and it actually shows that $b$ cannot be strongly convex on certain  bounded and convex subsets of $U$ (on intervals of the form $[x,y)$, where $x_i=y_i=1$ for all $i\in\{1,\ldots,n-1\}$, $x_n=a>0$, $y_n=0$). Because of  Proposition \bref{prop:StronglyConvexSum} (with $I=\{1\}$) it is sufficient to assume that $\|\cdot\|=\|\cdot\|_2$, that is, that the norm is the Euclidean norm. So assume that $b$ is strongly convex on $U$. Then its modulus of strong  convexity $\mu[U]$ is positive. In particular, if we fix $\epsilon>0$, then $\mu[U]$ is a strong convexity parameter of $b$ on all the line segments of the form $[x,y]$ for $x:=(x_i)_{i=1}^n\in U$ and $y:=(y_i)_{i=1}^n\in U$ such that  $x_1=2\epsilon$, $y_1=\epsilon$ and $x_i=y_i=1$ for each $i\in \{1,\ldots,n\}\backslash\{1\}$. Proposition \bref{prop:BregmanProperties}\beqref{BregProp:BregPsiS1S2} and \beqref{eq:Bregman-Havrda-Charvat-Tsallis} yield 
\begin{equation}\label{eq:frac_epsilon}
\frac{1}{2}\mu[U]\|x-y\|^2\leq B(x,y)=\frac{x_1^{q}-y_1^{q}-q y_1^{q-1}(x_1-y_1)}{q-1}.
\end{equation} 
 We conclude that $\mu[U]\leq 2(2^{q}-1-q)\epsilon^{q-2}/(q-1)$. By taking into account the derivation and the fact that $q>2$, it follows that the right-hand side of \beqref{eq:frac_epsilon} tends to zero as $\epsilon$ tends to zero (the right-hand side is indeed positive because, as follows from elementary calculus, $2^{q}-q-1>0$ whenever $q>2$). Therefore $\mu[U]\leq 0$, a contradiction to the assumption that $\mu[U]>0$. Hence $b$ is not strongly convex on $U$. 

\subsubsection{\bf $1\neq q\in (0,2)$}\label{subsubsec:NoStrongConv 0<q<2} Suppose now that $q\in (0,2)$, $q\neq 1$, and assume to the contrary that $b$ is  strongly convex on $U$, namely that it has a strong convexity parameter $\mu[U]>0$ on $U$.  In particular, $\mu[U]$ is a strong convexity parameter of $b$ on all the line segments of the form $[x,y]$ for $x:=(1,1\ldots,1)\in U$ and $y=(y_i)_{i=1}^n\in U$ such that $y_1\in(1,\infty)$ and $y_i=1$ for each $i\in \{1,\ldots,n\}\backslash\{1\}$. As before, we can assume that the norm is Euclidean.  From \beqref{eq:Bregman-Havrda-Charvat-Tsallis}, the choice of $x$ and $y$ and Proposition \bref{prop:BregmanProperties}\beqref{BregProp:BregPsiS1S2}, we have
\begin{equation*}
\frac{1}{2}\mu[U]\|x-y\|^2\leq B(x,y)=\frac{1-y_1^{q}-q y_1^{q-1}(1-y_1)}{q-1}.
\end{equation*}
We conclude that $\mu[U]\leq (1-y_1^{q}-q y_1^{q-1}(1-y_1))/(0.5(q-1)(1-y_1)^2)$. Since $q<2$, the right-hand side of this inequality tends to zero as $y_1$ tends to  infinity. Thus $\mu[U]\leq 0$, a contradiction  to the assumption that $\mu[U]>0$. This contradiction shows that $b$ is not strongly convex on $U$. 

\subsection{No global uniform convexity when $q=\frac{1}{2}$}\label{subsec:NoUniformConv q=0.5} We show below that if $q=\frac{1}{2}$, then $b$ cannot even be uniformly convex on $U$. 
Indeed, assume to the contrary that $b$ is uniformly convex on $U$. Denote by $\psi_{b,U}$ the modulus of uniform convexity of $b$ on $U$. From  Proposition \bref{prop:BregmanProperties}\beqref{BregProp:BregPsiS1S2} we have 
\begin{equation}\label{eq:psiV}
\psi_{b,U}(\|x-y\|)\leq B(x,y),\quad \forall (x,y)\in U^2.
\end{equation}
For each $s\in (0,\infty)$, define $x(s)=(x_i(s))_{i=1}^n$ and $y(s)=(y_i(s))_{i=1}^n$ as follows: 
\begin{subequations}
\begin{equation}
x_1(s):=s+\sqrt{s},\quad y_1(s):=s, 
\end{equation}
\begin{equation}
x_i(s):=1=:y_i(s),\quad \forall\, i\in \{1,\ldots,n\}\backslash\{1\}. 
\end{equation}
\end{subequations}
Both $x(s)$ and $y(s)$ belong to $U$, and after substituting them in \beqref{eq:psiV} instead of $(x,y)$,  using \beqref{eq:Bregman-Havrda-Charvat-Tsallis} and making simple manipulations, we arrive at 
\begin{multline}\label{eq:x(s)y(s)}
\psi_{b,U}(\|x(s)-y(s)\|)\leq B(x(s),y(s))=-2\sqrt{x_1}+2\sqrt{y_1}+\frac{x_1-y_1}{\sqrt{y_1}}\\
=\frac{(y_1-x_1)(\sqrt{y_1}-\sqrt{x_1})}{(\sqrt{y_1}+\sqrt{x_1})\sqrt{y_1}}
=\frac{-\sqrt{s}(\sqrt{s}-\sqrt{s+\sqrt{s})}}{(\sqrt{s}+\sqrt{s+\sqrt{s}})\sqrt{s}}
=\frac{\sqrt{1+s^{-0.5}}-1}{\sqrt{1+s^{-0.5}}+1}{\xrightarrow[s\to \infty]{}}\,\,0.
\end{multline}
On the other hand, since all the norms on $\R^n$ are equivalent, there is $\eta>0$ such that $\|z\|\geq \eta\|z\|_2$ for each $z\in \R^n$, where $\|\cdot\|_2$ is the Euclidean norm. Therefore $\|x(s)-y(s)\|\geq \eta\|x(s)-y(s)\|_2=\eta\sqrt{s}\to\infty$ as $s\to\infty$. Consequently, Lemma \bref{lem:MonotonePsi} implies that $\lim_{s\to\infty}\psi_{b,U}(\|x(s)-y(s)\|)=\infty$. This is  a contradiction to \beqref{eq:x(s)y(s)}. Hence $b$ is not uniformly on $U$, as claimed.

\subsection{When is $b$ a Bregman function?}\label{subsec:HCT-Bregman}
We finish this section by discussing the question when $b$ is a Bregman function. The discussion is separated into cases, according to the possible values of $q$. 
\subsubsection{The case $q=2$:} In this case $b$ is a sequentially consistent Bregman function which has the limiting difference property, as follows from previous subsections and Corollary \bref{cor:BregmanFunctionFiniteDim-b'-is-cont}, which also imply that the second type level-sets of $B$ are bounded.

\subsubsection{The case $q>2$:} In this case $b$ is a sequentially consistent Bregman function which has the limiting difference property and $B$ has bounded level-sets of the second type. Indeed, 
Proposition \bref{prop:BregmanProperties}\beqref{BregProp:LevelSet2BoundedSufficientConditions} and the fact that $b$ is strictly convex on $\dom(b)$ imply that the second type level-sets of $B$ are bounded.  
Parts \beqref{BregmanDef:U} and \beqref{BregmanDef:ConvexLSC} of Definition \bref{def:BregmanDiv} are satisfied by the assumptions on $b$ and previous subsections. Definition \bref{def:BregmanDiv}\beqref{BregmanDef:B} is just the definition of the Bregman divergence $B$, Definition \bref{def:BregmanDiv}\beqref{BregmanDef:|x-y_i|=0==>B(x,y_i)=0} follows from Proposition  \bref{prop:BregmanProperties}\beqref{BregProp:Continuous_b|x_i-y_i|=0==>B(x_i,y_i)=0} since $X$ is finite-dimensional and hence $b'$, which is continuous on $\cl{U}$, is uniformly continuous on bounded subsets of $U$. The limiting difference property follows from Proposition \bref{prop:BregmanProperties}\beqref{BregProp:B(x,y)=B(x,y_i)-B(y,y_i)} since $X$ is finite-dimensional and $b'$ is continuous. It remains to prove that $b$ satisfies Definition \bref{def:BregmanDiv}\beqref{BregmanDef:B(x_i,y_i)=0==>|x_i-y|=0} and  Definition \bref{def:BregmanDiv} \beqref{BregmanDef:BoundedLevelSet}. We note that we cannot continue here as in the proof of Corollary \bref{cor:BregmanFunction-b'-is-unicont} above because of the  limitations mentioned in Subsections \bref{subsec:HCT-StrongConv}-\bref{subsec:HCT-RelativeUniformConv} on the (relative) uniform convexity of $b$ in $U$ when $q>2$.

In order to establish Definition \bref{def:BregmanDiv}\beqref{BregmanDef:B(x_i,y_i)=0==>|x_i-y|=0}, we use ideas from the proof of Proposition \bref{prop:BregmanProperties}\beqref{BregProp:x_iCompact} (we cannot use 
 Proposition \bref{prop:BregmanProperties}\beqref{BregProp:x_iCompact} directly because there the  point $y$ must be in $U$ and in our case it can be in $\dom(b)=\cl{U}$). 
 From \beqref{eq:Havrda-Charvat-Tsallis} and \beqref{eq:Bregman-Havrda-Charvat-Tsallis}, respectively, we have $b(x)=\sum_{k=1}^n b_k(x_k)$ and $B(x,y)=\sum_{k=1}^n B_k(x_k,y_k)$, where  for each $k\in\{1,\ldots,n\}$, the function $b_k:\R\to(-\infty,\infty]$ is defined by $b_k(x_k):=(x_k^q-1)/(q-1)$, $x_k\in [0,\infty)$,  and $b_k(x_k):=\infty$ otherwise, and $B_k(x_k,y_k):=b_k(x_k)-b_k(y_k)-b_k'(y_k)(x_k-y_k)$ for all $(x_k,y_k)\in\R\times [0,\infty)$. A simple verification shows that $b_k$ is twice continuously differentiable in  $[0,\infty)$ (with right-hand derivatives at $x_k=0$) and strictly convex on $[0,\infty)$, and that $B_k$ is continuous on $[0,\infty)^2$ for all $k\in\{1\ldots,n\}$. 
 
 Now let $(x_i)_{i=1}^{\infty}$ be a bounded sequence in $\cl{U}$ and suppose that $(y_i)_{i=1}^{\infty}$ is a sequence in $U$ which converges to some $y_{\infty}\in \cl{U}$ and also satisfies $\lim_{i\to\infty}B(x_i,y_i)=0$. Since the space is finite-dimensional, $(x_i)_{i=1}^{\infty}$ has at least one cluster point. Let $x_{\infty}=(x_{\infty,k})_{k=1}^n$ be an arbitrary cluster point  of $(x_i)_{i=1}^{\infty}$. Then $\lim_{j\to\infty}\|x_{\infty}-x_{i_j}\|=0$ for some subsequence $(x_{i_j})_{j=1}^{\infty}$  of $(x_i)_{i=1}^{\infty}$. In particular, $x_{\infty,k}=\lim_{j\to\infty}x_{{i_j},k}$ for each $k\in\{1\ldots,n\}$. Since we assume that  $\lim_{i\to\infty}y_i=y_{\infty}$, the continuity of  $B_k$ on $[0,\infty)^2$ implies that $\lim_{j\to\infty}B_k(x_{i_j,k},y_{i_j,k})=B_k(x_{\infty,k},y_{\infty,k})$ for all $k\in\{1\ldots,n\}$. Since we assume that $\lim_{i\to\infty}B(x_i,y_i)=0$, we have  $0=\lim_{j\to\infty}B(x_{i_j},y_{i_j})=B(x_{\infty},y_{\infty})=\sum_{k=1}^n B_k(x_{\infty,k},y_{\infty,k})$ for all $k\in\{1\ldots,n\}$. Since $B_k$ is non-negative, as follows from Proposition \bref{prop:BregmanProperties}\beqref{BregProp:B=0}, we conclude that  $B_k(x_{\infty,k},y_{\infty,k})=0$  for all $k\in\{1\ldots,n\}$. 
 
Let $k\in \{1,\ldots,n\}$ be fixed. There are two possibilities. If $y_{\infty,k}=0$, then, from the definition of $B_k$, one has $B_k(x_{\infty,k},y_{\infty,k})=x_{\infty,k}^q/(q-1)$, and hence $x_{\infty,k}=0$. If $y_{\infty,k}>0$, then the fact that $b_k$ is strictly convex on $(0,\infty)$ implies, using
Proposition \bref{prop:BregmanProperties}\beqref{BregProp:B=0}, that $x_{\infty,k}=y_{\infty,k}$. Hence in both cases $x_{\infty,k}=y_{\infty,k}$ and this holds for every $k\in\{1,\ldots,n\}$. We conclude that $x_{\infty}=y_{\infty}$. Since $x_{\infty}$ was an arbitrary cluster point of $(x_i)_{i=1}^{\infty}$, it follows that $\lim_{i\to\infty}x_{i}=y_{\infty}$, and this establishes Definition \bref{def:BregmanDiv}\beqref{BregmanDef:B(x_i,y_i)=0==>|x_i-y|=0}. 

To see that $b$ is sequentially consistent, let $(x_i)_{i=1}^{\infty}$ be a sequence in $\dom(b)$ and $(y_i)_{i=1}^{\infty}$ be a bounded sequence in $U$ such that $\lim_{i\to\infty}B(x_i,y_i)=0$. It follows, as in previous paragraphs, that $\lim_{i\to\infty}B_k(x_{i,k},y_{i,k})=0$ for all $k\in\{1,\ldots,n\}$. Assume to the contrary that it is not true that $\lim_{i\to\infty}|x_{i,k}-y_{i,k}|=0$ for some $k\in\{1,\ldots,n\}$. Then there is some $\epsilon>0$ and subsequences $(x_{i_j,k})_{j=1}^{\infty}$ of $(x_{i,k})_{i=1}^{\infty}$ and $(y_{i_j,k})_{j=1}^{\infty}$ of $(y_{i,k})_{i=1}^{\infty}$, respectively, such that $|x_{i_j,k}-y_{i_j,k}|\geq\epsilon$. It must be that $(x_{i_j,k})_{j=1}^{\infty}$ is bounded. Indeed, assume to the contrary that it is unbounded. Then there is an infinite subset $J$ of indices $j\in\N$ such that $x_{i_j,k}\to \infty$ as $j\to\infty$ and $j\in J$. Since $(y_i)_{i=1}^{\infty}$ is bounded, there is $M>0$ such that $|y_{i}|<M$ for all $i\in\N$. Now, since  $\lim_{i\to\infty}B_k(x_{i,k},y_{i,k})=0$, we have, in particular, $B_k(x_{i_j,k},y_{i_j,k})\leq 1$ for all $j$ large enough. Thus the definition of $B_k$ implies that $x_{i_j,k}^q-qM^{q-1}x_{i_j,k}\leq x_{i_j,k}^q-qy_{i_j,k}^{q-1}x_{i_j,k}\leq q-1+qM^q+M^q$ for all $j\in\N$ large enough. However, this inequality cannot hold since its left-hand side tends to infinity when $j\in J$ and $j\to\infty$: this is because $q>2$ and because the function $t\mapsto t^q-ct$ tends to infinity as $t\to\infty$ (here $c$ is an arbitrary fixed positive number). Consequently, $(x_{i_j,k})_{j=1}^{\infty}$ is indeed bounded, as asserted. 

By passing to subsequences and using the compactness of closed and bounded intervals in $[0,\infty)$, we can find  points $x_{\infty,k}$ and $y_{\infty,k}$ in $[0,\infty)$, and an infinite subset $J'$ of $\N$ such that $x_{\infty,k}=\lim_{j\to\infty, j\in J'}x_{i_j,k}$ and $y_{\infty,k}=\lim_{j\to\infty, j\in J'}y_{i_j,k}$, respectively. Since $\lim_{j\to\infty, j\in J'}B_k(x_{i_j,k},y_{i_j,k})=0$ and since $B_k$ is continuous on $[0,\infty)^2$, it follows that  $B_k(x_{\infty,k},y_{\infty,k})=0$. If $y_{\infty,k}>0$, then from Proposition \bref{prop:BregmanProperties}\beqref{BregProp:B=0} and the fact that $|x_{\infty,k}-y_{\infty,k}|\geq \epsilon>0$ we have $B_k(x_{\infty,k},y_{\infty,k})>0$, a contradiction. Hence $y_{\infty,k}=0$, but then the definition of $B_k$ implies that $0=B_k(x_{\infty,k},y_{\infty,k})=x_{\infty,k}^q/(q-1)$, and therefore $x_{\infty,k}=0$. Thus $x_{\infty,k}=y_{\infty,k}$, a contradiction to the inequality $|x_{\infty,k}-y_{\infty,k}|\geq \epsilon>0$. This contradiction shows that the condition $\lim_{i\to\infty}|x_{i,k}-y_{i,k}|=0$ must hold for every $k\in\{1,\ldots,n\}$. Hence $\lim_{i\to\infty}\|x_i-y_i\|=0$, as required.

Finally, we still need to establish Definition \bref{def:BregmanDiv}\beqref{BregmanDef:BoundedLevelSet}. 
This will be a consequence of a direct verification. 
Indeed, fix $x\in \dom(b)$ and $\gamma\in [0,\infty)$ and assume that $L(x,\gamma)\neq\emptyset$, otherwise the assertion is trivial. Let $y\in L(x,\gamma)$ be arbitrary. Then $y\in U$ and  $B(x,y)\leq\gamma$, and from the decomposition $B=\sum_{k=1}^nB_k$ and the fact that each $B_k$ is non-negative we have $B_k(x_k,y_k)\leq \gamma$ for all $k\in\{1,\ldots,n\}$. Thus $-y_k^q+qy_k^q-qy_k^{q-1}x_k\leq (q-1)\gamma-x_k^q$. 
Now fix $k\in\{1,\ldots,n\}$. From previous lines, we obtain 
\begin{equation}\label{eq:y_k^q}
y_k^q\left(q-1-\frac{qx_k}{y_k}\right)\leq (q-1)\gamma-x_k^q. 
\end{equation}
We now separate the analysis into two cases. In the first case $y_k<2qx_k$. Hence $2	x_k$ is an upper bound on $y_k$. In the second case $y_k\geq 2qx_k$. This inequality, as well as the inequalities $q>2$ and $x_k\geq 0$, imply that $q-1-(qx_k/y_k)>1-0.5=0.5$. This fact and \beqref{eq:y_k^q} imply that the right-hand  side of \beqref{eq:y_k^q} is nonnegative and $0.5y_k^q\leq (q-1)\gamma-x_k^q$. Hence $y_k\leq (2(q-1)\gamma-2x_k^k))^{1/q}$. To conclude, if $y\in L(x,\gamma)$, then for all $k\in\{1,\ldots,n\}$ we have $y_k\leq \max\{2qx_k,(2(q-1)\gamma-2x_k^q)^{1/q}\}$. 
 Since all the norms on $\R^n$ are equivalent, there is a constant $\sigma>0$ such that $\sigma\|z\|\leq \|z\|_{\infty}$ for each $z\in\R^n$. As a result of the previous lines, we deduce that 
\begin{equation}
\|y\|\leq\frac{\|y\|_{\infty}}{\sigma}\leq\frac{1}{\sigma} \max_{k\in\{1,\ldots,n\}}\{\max\{2qx_k,(2(q-1)\gamma-2x_k^q)^{1/q}\}\}. 
\end{equation}  
The above inequality shows  that  $L(x,\gamma)$ is bounded, as required.

\subsubsection{The case $q\in (1,2)$} In this case $b$ is a sequentially consistent Bregman function which satisfies the limiting difference property, as a  consequence of previous subsections and Corollary \bref{cor:BregmanFunction-b'-is-unicont} (here we also use the assumption that $X$ is finite-dimensional and the simple observation that $b'$ can obviously be extended to a continuous function defined on $\cl{U}$; thus it follows from classical theorems in analysis that the extension of $b'$ is uniformly  continuous on every compact subset of $\cl{U}$ and hence $b'$ has this property on every bounded subset of $U$). In addition, 
Proposition \bref{prop:BregmanProperties}\beqref{BregProp:LevelSet2BoundedSufficientConditions} and the fact that $b$ is strictly convex on $\dom(b)$ imply that the second type level-sets of $B$ are bounded.  

\subsubsection{The case $q\in (0,1)$} The proof that $b$ is a sequentially consistent Bregman function which satisfies the limiting difference property is as in Subsection \bref{subsec:BregmanGibbsShannon},  where the only essential differences are that now one (of course) uses the corresponding  subsections of Section \bref{sec:HavrdaCharvatTsallis}, and when showing that Definition \bref{def:BregmanDiv}\beqref{BregmanDef:|x-y_i|=0==>B(x,y_i)=0}  holds using Proposition \bref{prop:BregmanProperties}\beqref{BregProp:D(x,y_i)=0dom(b)}, then  instead of using the function $t\mapsto \log(t)(x_k-t)$ and the limit $\lim_{t\to 0+}t\log(t)=0$,	one uses the function $t\mapsto (q/(q-1))t^{q-1}(x_k-t)$ and the obvious limit $\lim_{t\to 0+}t^{q-1}t=0$,  respectively. In addition, 
Proposition \bref{prop:BregmanProperties}\beqref{BregProp:LevelSet2BoundedSufficientConditions} and the fact that $b$ is strictly convex on $\dom(b)$ imply that the second type level-sets of $B$ are bounded. 

\subsubsection{The case $q\in (-\infty,0)$}\label{subsubsec:q<0} In this case it is not true that $b$ is a Bregman function, for instance because not all the level sets  of $B$ are bounded. Indeed, fix some $x\in [1,\infty)^n$ and let $\gamma>\max\{nx_k^q/(1-q): k\in \{1,\ldots,n\}\}$ be arbitrary. By the choice of $\gamma$ and since $q<0$, we have $\lim_{t\to\infty}(x_k^q-t^q-qt^{q-1}(x_k-t))/(1-q)=x_k^q/(1-q)<\gamma/n$ for all $k\in\{1,\ldots,n\}$. Hence there is $t_0>0$ large enough such that for all $t\in(t_0,\infty)$ and all $k\in\{1,\ldots,n\}$, one has $(x_k^q-t^q-qt^{q-1}(x_k-t))/(1-q)<\gamma/n$. The previous inequality and \beqref{eq:Bregman-Havrda-Charvat-Tsallis} imply that the inequality $B(x,y)\leq\gamma$ is satisfied for all $y\in (t_0,\infty)^n$. In other words, the level-set $\{y\in X: B(x,y)\leq \gamma\}$ contains the unbounded set $(t_0,\infty)^n$.

\section{The negative Burg entropy}\label{sec:Burg}
Let $X:=\R^n$ ($n\in\N$) with an arbitrary norm $\|\cdot\|$. Let  $U:=(0,\infty)^n$. The negative Burg entropy $b:X\to(-\infty,\infty]$ is defined by
\begin{equation}\label{eq:BurgEntropy}
b(x):=\left\{\begin{array}{lll}
-\displaystyle{\sum_{k=1}^n} \log(x_k), & x=(x_k)_{k=1}^n\in U,\\
\infty, & x\notin U.
\end{array}
\right.
\end{equation}
The corresponding (pre-)Bregman divergence, also known as the Itakura-Saito divergence, is defined on $X^2$ by 
\begin{equation}\label{eq:ItakuraSaito}
B(x,y):=\left\{\begin{array}{lll}
\displaystyle{\sum_{k=1}^n}\left(\log\left(\displaystyle{\frac{y_k}{x_k}}\right)+\displaystyle{\frac{x_k}{y_k}}-1\right), & (x,y)\in U^2,\\
\infty, & \textnormal{otherwise}.
\end{array}
\right.
\end{equation}
It seems that the negative of $b$ was introduced by Burg in the 
continuous case (an integral instead of a sum) in 1967, in the form 
of a paper presented in a conference \cite{Burg1967conf}. An extended version of this 
unpublished paper appears in Burg's 1975 thesis \cite{Burg1975phd} (see, for instance, \cite[p. 1]{Burg1975phd}). See also \cite{EdwardFitelson1973jour, Frieden1975inbook} and some of the references therein for other works which mention explicitly $b$ or $-b$ (still in the continuous case, and before the appearance of \cite{Burg1975phd}) and attribute $-b$ to Burg \cite{Burg1967conf}. The introduction of the Itakura-Saito divergence is frequently attributed to Itakura and Saito \cite{ItakuraSaito1968inproc}, but in that extended abstract neither $B$ nor a continuous analogous of it, namely with integrals instead of sums,  appear (there is, however, a closely related expression in \cite[p. C-18, equation (7)]{ItakuraSaito1968inproc} in the continuous case). In the context of the theory of Bregman divergences, it seems that $b$ and $B$ were first discussed in a somewhat detailed manner in \cite{CensorLent1987jour}; see also \cite{CensorDePierroIusem1991jour} for a related discussion. 

\subsection{Basic properties} For every $z\in U$ and every vector $w\in X$, one has  $b'(z)=(-1/z_i)_{i=1}^n$ and $b''(z)(w,w)=\sum_{i=1}^n w_i^2/z_i^2$. Both derivative are continuous on $\dom(b)=U$. From considerations similar to the ones given in Subsection  \bref{subsec:HCT-BasicProperties} it follows that $b$ is strictly convex on $U$ and lower semicontinuous on $X$, and also essentially smooth. Thus $b$ is Legendre.

\subsection{Strong convexity} Fix $x,y\in U$, $x\neq y$. We can follow word for word the analysis in Section \bref{sec:GibbsShannon} (near \beqref{eq:W})  to conclude that both 
$\mu_{1}[x,y]:=c_2^2/\max\{\|x\|^{2}_{\infty},\|y\|^{2}_{\infty}\}$ and  $\mu_{2}[x,y]:=c_2^2/(c_{\infty}^2\max\{\|x\|^{2},\|y\|^{2}\})$ are strong convexity parameters of $b$ on $[x,y]$. This shows that $b$ is strongly convex on any bounded, closed and convex subset $S\neq\emptyset$ of $U$ with $\mu[S]:=c_2^2/(c_{\infty}^2M_S^2)$ as a strong convexity parameter of $b$ on $S$, where $M_S$ is an upper bound on the norm of vectors in $S$. 

\subsection{Relative uniform convexity} It is shown below that for each $x\in U$, the function $b$ is uniformly  convex relative to $(\{x\},\{w\in U: \|w\|>r_x\})$ for some $r_x>0$ and with 
\begin{equation}\label{eq:RelativeGaugeBurg}
\psi(t):=\frac{1}{4}\log(1+t), \quad t\in [0,\infty),
\end{equation}
as a relative gauge. The proof does not follow the same reasoning as in Sections  \bref{sec:GibbsShannon} and \bref{sec:HavrdaCharvatTsallis}, namely using the strong convexity  estimate on $[x,y]$ and referring to Remark \bref{rem:TypesOfConvexity}\beqref{item:mu[x,y]},  because by doing so one ends up with $\psi(x,y):=\alpha\|x-y\|^2/\|y\|^2$ as a pre-gauge (where  $\alpha>0$ is some constant) and it is not clear if there exists a relative gauge which is a lower bound of this pre-gauge and also tends to infinity when its argument tends to infinity. Instead, below we work directly with \beqref{eq:RelativeUniformConvexity} and carefully analyze this inequality. 

First, we will define $r_x$. The definition is somewhat involved, but the reasoning behind it will become clear later. Since all the norms on the finite-dimensional space $X$ are equivalent, there exists $\gamma>0$ such that 
\begin{equation}\label{eq:gamma<infty}
\|w\|\leq \gamma\|w\|_{\infty}\quad \forall w\in X.
\end{equation}
Since $\lim_{s\to\infty}(1.5s^{5/8}\log(s)-s+1)=-\infty$, there exists $s_1>1$ such that for all $s>s_1$ the inequality $1.5s^{5/8}\log(s)-s+1<0$ holds. In fact, $s_1$ can be taken as $4^8$. Define 
\begin{equation}\label{eq:t_1}
t_1:=\max\{s_1\gamma x_j: j\in \{1,\ldots,n\}\}. 
\end{equation}
Since $\lim_{t\to\infty}[0.5\log(t)-0.25\log(1+t)-0.5\log(2\gamma \|x\|_{\infty})-4]=\infty$, there exists $t_2>1$ such that 
\begin{equation}\label{eq:0.5log0.25}
\frac{1}{2}\log\left(\frac{t}{2\gamma \|x\|_{\infty}}\right)-4>\frac{1}{4}\log(1+t),\quad \forall\,t>t_2. 
\end{equation}
Now we can define $r_x$ as follows: 
\begin{equation}\label{eq:r_x}
r_x:=\max\{t_1,2t_2,2\|x\|\}.
\end{equation}
We want to prove that for every fixed $y\in \{w\in U: \|w\|>r_x\}$, we have $h(\lambda)\geq 0.25\log(1+\|x-y\|)$ for each $\lambda\in (0,1)$, where 
\begin{equation}\label{eq:h_burg}
h(\lambda):=h_{x,y}(\lambda):=\frac{\lambda b(x)+(1-\lambda) b(y)-b(\lambda x+(1-\lambda)y)}{\lambda(1-\lambda)}.
\end{equation}
Once this inequality is proved, it follows from \beqref{eq:RelativeUniformConvexity} that the function $\psi$ defined in \beqref{eq:RelativeGaugeBurg} is a relative gauge of $b$ on $(\{x\},\{w\in U: \|w\|>r_x\})$. So let $y\in \{w\in U: \|w\|>r_x\}$ be fixed. Since $\|y\|>2\|x\|$ it follows from the triangle inequality that 
\begin{equation}\label{eq:2x<y}
0.25\|y\|<0.5(\|y\|-\|x\|)\leq 0.5\|y-x\|\leq 0.5\|y\|+0.5\|x\|<\|y\|. 
\end{equation}
Let $k\in \{1,\ldots,n\}$ be an index (any index if there are several ones) for which $y_k=\max\{y_i: i\in\{1\ldots,n\}\}$. Then $y_k=\|y\|_{\infty}$. From \beqref{eq:gamma<infty} we have $\|y\|/\gamma\leq y_k$. This inequality, \beqref{eq:r_x}, the assumption that $\|y\|>r_x$ and \beqref{eq:t_1} imply that 
\begin{equation}\label{eq:y_k/x_k}
\frac{y_k}{x_k}\geq\frac{\|y\|}{\gamma x_k}>\frac{r_x}{\gamma x_k}\geq \frac{t_1}{\gamma x_k}\geq s_1.
\end{equation}
The inequality \beqref{eq:2x<y}, when combined with \beqref{eq:r_x} and the fact that $\|y\|>r_x$, imply that 
\begin{equation}\label{eq:and}
\|y\|\geq 0.5\|x-y\|\quad\textnormal{and}\quad \|x-y\|\geq \|y\|/2>t_2.
\end{equation}
Given $s>s_1$ (see the discussion after \beqref{eq:gamma<infty}), consider the function $v:[0.5,1]\to\R$ defined by $v(\lambda):=\lambda+(1-\lambda)s-s^{0.5(1-\lambda)(2+\lambda)}$ for all $\lambda\in [0.5,1]$. Then  for all $\lambda\in [0.5,1]$, 
\begin{equation}
v'(\lambda)=1-s+(0.5+\lambda)s^{0.5(1-\lambda)(2+\lambda)}\log(s)\leq 1-s+1.5s^{5/8}\log(s)<0,
\end{equation}
because the function $\lambda\mapsto 0.5(1-\lambda)(2+\lambda)$ is decreasing and positive on $[0.5,1]$, and because $s>s_1>1$. 
Hence $v$ is decreasing on $[0.5,1]$, and since $v(1)=0$, it follows that $v(\lambda)>0$ for every $\lambda\in [0.5,1)$. Therefore $\lambda+(1-\lambda)s\geq s^{0.5(1-\lambda)(2+\lambda)}=s^{1-\lambda+0.5(1-\lambda)\lambda}$ and thus $s^{0.5\lambda(1-\lambda)}\leq \lambda(1/s)^{1-\lambda}+(1-\lambda)s^{\lambda}$ for every $\lambda\in [0.5,1]$. After applying $\log$ to both sides of this inequality, we have
\begin{equation}\label{eq:0.5log(s)}
\frac{1}{2}\log(s)\leq \frac{1}{\lambda(1-\lambda)}\log\left(\lambda\left(\frac{1}{s}\right)^{1-\lambda}+(1-\lambda)s^{\lambda}\right)
\end{equation}
for every $s>s_1$ and every $\lambda\in [0.5,1)$. When we combine \beqref{eq:0.5log(s)} and \beqref{eq:y_k/x_k} with the immediate inequality $x_k\leq \|x\|_{\infty}$, then for each $\lambda\in [0.5,1)$, we obtain 
\begin{multline}\label{eq:0.5log(s) y_k/x_k x_infty}
\frac{1}{\lambda(1-\lambda)}\log\left(\lambda \left(\frac{x_k}{y_k}\right)^{1-\lambda}+(1-\lambda)\left(\frac{y_k}{x_k}\right)^{\lambda}\right)
\geq \frac{1}{2}\log\left(\frac{y_k}{x_k}\right)\geq\frac{1}{2}\log\left(\frac{\|y\|}{\gamma\|x\|_{\infty}}\right).
\end{multline}
Now we start with \beqref{eq:h_burg}, perform simple manipulations based on \beqref{eq:BurgEntropy} and   \beqref{eq:ItakuraSaito}, combine them with the well-known weighted arithmetic-geometric mean inequality  $\lambda s+(1-\lambda)t\geq s^{\lambda}t^{1-\lambda}$ for all $\lambda\in [0,1]$ and $t,s\in (0,\infty)$ (which is nothing but an immediate consequence of the concavity of $\log$), use the fact that $t\mapsto \log(t)$ is nonnegative on $[1,\infty)$ and increasing on $(0,\infty)$, use  \beqref{eq:0.5log(s) y_k/x_k x_infty}, use \beqref{eq:and}, and also use \beqref{eq:0.5log0.25}. The result of this process yields the following inequality for each $\lambda\in [0.5,1)$: 
\begin{multline}\label{eq:h>psiOn[0.5,1)}
h(\lambda)=\frac{\lambda b(x)+(1-\lambda)b(y)-b(\lambda x+(1-\lambda)y)}{\lambda(1-\lambda)}
=\frac{1}{\lambda(1-\lambda)}\sum_{j=1}^n\log\left(\frac{\lambda x_j+(1-\lambda)y_j}{x_j^{\lambda}y_j^{1-\lambda}}\right)\\
=\frac{1}{\lambda(1-\lambda)}\log\left(\frac{\lambda x_k+(1-\lambda)y_k}{x_k^{\lambda}y_k^{1-\lambda}}\right)+\frac{1}{\lambda(1-\lambda)}\sum_{j\neq k}\log\left(\frac{\lambda x_j+(1-\lambda)y_j}{x_j^{\lambda}y_j^{1-\lambda}}\right)\\
\geq \frac{1}{\lambda(1-\lambda)}\log\left(\frac{\lambda x_k+(1-\lambda)y_k}{x_k^{\lambda}y_k^{1-\lambda}}\right)=\frac{1}{\lambda(1-\lambda)}\log\left(\lambda \left(\frac{x_k}{y_k}\right)^{1-\lambda}+(1-\lambda)\left(\frac{y_k}{x_k}\right)^{\lambda}\right)\\
\geq \frac{1}{2}\log\left(\frac{y_k}{x_k}\right)
\geq  \frac{1}{2}\log\left(\frac{\|y\|}{\gamma\|x\|_{\infty}}\right)
\geq \frac{1}{2} \log\left(\frac{\|y-x\|}{2\gamma\|x\|_{\infty}}\right)\\
> \frac{1}{2}\log\left(\frac{\|y-x\|}{2\gamma\|x\|_{\infty}}\right)-4
\geq \frac{1}{4}\log(1+\|x-y\|),
\end{multline}
as required. It remains to show that $h(\lambda)\geq 0.25\log(1+\|x-y\|)$ for every $\lambda\in (0,0.5)$. Using elementary calculus, we see that the function $u(\lambda):=\log(1-\lambda)+2\lambda$ is increasing on $[0,0.5]$ and since $u(0)=0$, we have $\log(1-\lambda)/\lambda\geq -2$ for all $\lambda\in (0,0.5]$. When we combine this inequality with the inequality $-2/(1-\lambda)\geq -4$ which holds for all $\lambda \in [0,0.5]$, we arrive at the inequality $\log(1-\lambda)/(\lambda(1-\lambda))\geq -4$ for each $\lambda\in (0,0.5]$. In addition, the monotonicity of the $\log$ function and the fact that $\lambda x_k+(1-\lambda)y_k\geq (1-\lambda)y_k>0$ for every $\lambda\in [0,1)$ imply that for all $\lambda\in [0,1)$ the following inequality holds: $\log((\lambda x_k+(1-\lambda)y_k)/(x_k^{\lambda}y_k^{1-\lambda}))\geq\log(((1-\lambda)y_k)/(x_k^{\lambda}y_k^{1-\lambda}))=\log(((1-\lambda)y_k^{\lambda})/x_k^{\lambda})$. 

 The above-mentioned inequalities and considerations mentioned in and before \beqref{eq:h>psiOn[0.5,1)} (considerations which do not depend on $\lambda$: they depend on \beqref{eq:y_k/x_k} and other inequalities which are independent of $\lambda$) show that for each $\lambda\in (0,0.5]$,
\begin{multline*}
h(\lambda)\geq \frac{1}{\lambda(1-\lambda)}\log\left(\frac{\lambda x_k+(1-\lambda)y_k}{x_k^{\lambda}y_k^{1-\lambda}}\right)\\
\geq \frac{1}{\lambda(1-\lambda)}\log\left(\frac{(1-\lambda)y_k^{\lambda}}{x_k^{\lambda}}\right)
=\frac{\log(1-\lambda)}{\lambda(1-\lambda)}+\frac{1}{1-\lambda}\log\left(\frac{y_k}{x_k}\right)\\
\geq -4+\log\left(\frac{y_k}{x_k}\right)\geq -4+\log\left(\frac{\|y-x\|}{2\gamma\|x\|_{\infty}}\right)\geq \frac{1}{4}\log(1+\|x-y\|),
\end{multline*}
as claimed. 

\subsection{No global uniform convexity}\label{subsec:BurgNoGlobalUC}
We show below that $b$ cannot be uniformly convex on $U$, and hence also on $\dom(b)$ (the case $n=1$ is stated without a proof in \cite[p. 186]{BauschkeCombettes2017book}). Indeed, assume to the contrary that $b$ is uniformly convex on $U$. Given $s>1$, let $x(s)\in U$ and $y(s)\in U$ be defined by $x_1(s):=s$, $y_1(s):=s+1$, and $x_i(s):=1=:y_i(s)$ for all $i\in\{1,\ldots,n\}\backslash\{1\}$.  Since all the norms on $\R^n$ are equivalent, there is $\eta>0$ such that $\|z\|\geq \eta\|z\|_2$ for each $z\in \R^n$, where $\|\cdot\|_2$ is the Euclidean norm. Therefore $\|x(s)-y(s)\|\geq \eta\|x(s)-y(s)\|_2=\eta$. Consequently, Lemma \bref{lem:MonotonePsi} implies that $\psi_{b,U}(\|x(s)-y(s)\|)\geq\psi_{b,U}(\eta)$ for each $s>1$, where  $\psi_{b,U}$ is the modulus of uniform convexity of $b$ on $U$. On the other hand, from Proposition \bref{prop:BregmanProperties}\beqref{BregProp:BregPsiS1S2} and \beqref{eq:ItakuraSaito}, we have  
\begin{equation}\label{eq:BurgNoUC}
\psi_{b,U}(\|x(s)-y(s)\|)\leq B(x(s),y(s))=\log\left(1+\frac{1}{s}\right)+\frac{s}{s+1}-1{\xrightarrow[s\to \infty]{}}\,\,0.
\end{equation}
Since $b$ is uniformly convex, we have $\psi_{b,U}(\eta)>0$. Therefore $\psi_{b,U}(\|x(s)-y(s)\|)<\psi_{b,U}(\eta)$ for $s$ sufficiently large, as follows from \beqref{eq:BurgNoUC}. Thus we arrive at a contradiction. This shows that $b$ cannot be uniformly convex on $U$. 

\subsection{$b$ is a sequentially consistent Bregman function which satisfies the limiting difference property} 
Based on the assertions proved in previous subsections, the proof follows the same lines as the proof of Corollary  \bref{cor:BregmanFunction-b'-is-unicont} with the exception that for establishing Definition \bref{def:BregmanDiv}\beqref{BregmanDef:|x-y_i|=0==>B(x,y_i)=0} we use Proposition \bref{prop:BregmanProperties}\beqref{BregProp:D(x,y_i)=0U)} instead of Proposition  \bref{prop:BregmanProperties}\beqref{BregProp:Continuous_b|x_i-y_i|=0==>B(x_i,y_i)=0}. Moreover, 
Proposition \bref{prop:BregmanProperties}\beqref{BregProp:LevelSet2BoundedSufficientConditions} and the fact that $b$ is strictly convex on $\dom(b)$ imply that the second type level-sets of $B$ are bounded.

\section{A negative iterated log entropy}\label{sec:IteratedLog}
Let $X:=\R^n$ ($n\in\N$) with an arbitrary norm $\|\cdot\|$. Let  $U:=(1,\infty)^n$. Let $b:X\to(-\infty,\infty]$ be the ``negative iterated log entropy'' defined by
\begin{equation}\label{eq:IteratedLog-b}
b(x):=\left\{\begin{array}{lll}
-\displaystyle{\sum_{k=1}^n} \log(\log(x_k)), & x=(x_k)_{k=1}^n\in U,\\
\infty, & x\notin U.
\end{array}
\right.
\end{equation}
The corresponding (pre-)Bregman divergence is 
\begin{equation}\label{eq:IteratedLog-B}
B(x,y):=\left\{\begin{array}{lll}
\displaystyle{\sum_{k=1}^n} \left[\log\left(\displaystyle{\frac{\log(y_k)}{\log(x_k)}}\right)+\displaystyle{\frac{x_k-y_k}{y_k\log(y_k)}}\right], & (x,y)\in U^2,\\
\infty, & \textnormal{otherwise}.
\end{array}
\right.
\end{equation}
As far as we know, so far neither $b$ nor $B$ have been considered elsewhere in the theory of Bregman functions and divergences. 

\subsection{Basic properties}\label{subsec:IteratedLogBasic}
A simple verification shows that $b'(z)=(-1/(z_k\log(z_k)))_{k=1}^n$ and also that 
\begin{equation*}
b''(z)(w,w)=\sum_{k=1}^n\displaystyle{\frac{1}{z_k\log(z_k)}}\left(\displaystyle{\frac{1}{z_k\log(z_k)}}+\displaystyle{\frac{1}{z_k}}\right)w_k^2\quad \forall z\in U,\,\forall w\in X. 
\end{equation*}
In particular, $b$, as well as its first and second derivatives, are continuous on $\dom(b)=U$, and for each $z\in U$ and $w\neq 0$, one has $b''(z)(w,w)>0$. Hence, by a well-known classical result, $b$ is strictly convex on $U$ (and hence convex on $X$). Since $b$ is proper and $\dom(\partial b)=U$, we conclude that $b$ is essentially strictly convex. In fact, in Subsection \bref{subsec:IteratedLogStronglyConvex} we show that $b$ is strongly convex on all nonempty, bounded and convex subsets of $U$. Considerations similar to the ones mentioned in Subsection \bref{subsec:HCT-BasicProperties} show that $b$ is lower semicontinuous on $X$. In addition, $b$ is essentially smooth and hence Legendre. 

\subsection{Strong Convexity}\label{subsec:IteratedLogStronglyConvex}
Fix some $x,y\in U$, $x\neq y$. We can follow word for word the analysis in Subsection \bref{subsec:GibbsShannonStronglyConvex} (that is, \beqref{eq:W} and the discussion before and after it) to conclude that if $M_{\infty}(x,y):=\max\{\|x\|_{\infty},\|y\|_{\infty}\}$ and $M(x,y):=\max\{\|x\|,\|y\|\}$, then both $\mu_{1}[x,y]$ and $\mu_{2}[x,y]$ which are defined below are strong convexity parameters of $b$ on $[x,y]$:
\begin{subequations}
\begin{equation}
\mu_{1}[x,y]:=\displaystyle{\frac{c_2^2}{M_{\infty}(x,y)\log(M_{\infty}(x,y))}}\left(\displaystyle{\frac{1}{M_{\infty}(x,y)\log(M_{\infty}(x,y))}}+\displaystyle{\frac{1}{M_{\infty}(x,y)}}\right),
\end{equation}
\begin{equation}\label{eq:IteratedLog m_2[x,y]}
\mu_{2}[x,y]:=\displaystyle{\frac{c_2^2}{c_{\infty}M(x,y)\log(c_{\infty}M(x,y))}}\left(\displaystyle{\frac{1}{c_{\infty}M(x,y)\log(c_{\infty}M(x,y))}}+\displaystyle{\frac{1}{c_{\infty}M(x,y)}}\right). 
\end{equation}
\end{subequations}
Hence, again from an inequality similar to \beqref{eq:W} (but with $\mu_{2}[x,y]$, and then with $\mu[S]$ below instead of the expression written there), if $S$ is a given nonempty, bounded and convex subset  of $U$ and $M_S$ is an upper bound on the norm of the vectors of $S$, then  Proposition \bref{prop:BregmanStronglyConvex}\beqref{item:eta_S} implies that $b$ is strongly convex on $S$ with
\begin{equation}
\mu[S]:=\displaystyle{\frac{c_2^2}{c_{\infty}M_S\log(c_{\infty}M_S)}}\left(\displaystyle{\frac{1}{c_{\infty}M_S\log(c_{\infty}M_S)}}+\displaystyle{\frac{1}{c_{\infty}M_S}}\right) 
\end{equation}
 as a strong convexity parameter.

\subsection{Relative uniform convexity}\label{subsec:IteratedLogRelativeUniformConvexity}
We believe that $b$ is uniformly convex relative to  pairs of the form $(\{x\}, S_2)$, where $x\in\dom(b)$ and $S_2:=\{w\in U: \|w\|>r_x\}$ for some $r_x>0$, but so far we have not been able to prove this. Actually, we conjecture that there is some $\beta\in(0,1)$ such that $\psi(t):=\beta \log(1+\log(1+t))$, $t\in [0,\infty)$,  is a possible relative gauge.

\subsection{No global uniform convexity}\label{subsec:IteratedLogNoGlobalUC}
We show below that $b$ cannot be uniformly convex on $U$, that is, on $\dom(b)$. Indeed, assume to the contrary that $b$ is uniformly convex on $U$. Given $s>1$, let $x(s)=(x_k(s))_{k=1}^n\in U$ and $y(s)=(y_k(s))_{k=1}^n\in U$ be defined by $x_1(s):=s$, $y_1(s):=s+1$, and $x_k(s):=2=:y_k(s)$ for all $k\in\{1,\ldots,n\}\backslash\{1\}$.  Since all the norms on $\R^n$ are equivalent, there is $\eta>0$ such that $\|z\|\geq \eta\|z\|_2$ for each $z\in \R^n$, where $\|\cdot\|_2$ is the Euclidean norm. Therefore $\|x(s)-y(s)\|\geq \eta\|x(s)-y(s)\|_2=\eta$ for all $s>1$. Consequently, Lemma \bref{lem:MonotonePsi} and the fact that $b$ is uniformly convex imply that $\psi_{b,U}(\|x(s)-y(s)\|)\geq\psi_{b,U}(\eta)>0$ for each $s>1$, where  $\psi_{b,U}$ is the modulus of uniform convexity of $b$ on $U$. On the other hand, Proposition \bref{prop:BregmanProperties}\beqref{BregProp:BregPsiS1S2} and \beqref{eq:IteratedLog-B} imply that 
\begin{multline*}
\psi_{b,U}(\|x(s)-y(s)\|)\leq B(x(s),y(s))=\log\left(\frac{\log(s+1)}{\log(s)}\right)\,-\,\frac{1}{(s+1)\log(s+1)}\\
=\log\left(\frac{\log(s)+\log(1+\frac{1}{s}))}{\log(s)}\right)-\,\frac{1}{(s+1)\log(s+1)}{\,\xrightarrow[s\to \infty]{}}\,\,\log(1)=0.
\end{multline*}
In particular,  $\psi_{b,U}(\|x(s)-y(s)\|)<\psi_{b,U}(\eta)$ for $s$ sufficiently large.  Thus we arrive at a contradiction. This shows that $b$ cannot be uniformly convex on $U$.

\subsection{$b$ is a sequentially consistent Bregman function which satisfies the limiting difference property} 
Parts \beqref{BregmanDef:U} and \beqref{BregmanDef:ConvexLSC} of Definition \bref{def:BregmanDiv} are satisfied by the assumptions on $b$ and the assertions proved in previous subsections, Definition \bref{def:BregmanDiv}\beqref{BregmanDef:B} is just the definition of the Bregman divergence $B$, Definition \bref{def:BregmanDiv}\beqref{BregmanDef:|x-y_i|=0==>B(x,y_i)=0} follows from  Proposition \bref{prop:BregmanProperties}\beqref{BregProp:D(x,y_i)=0U)}. Definition \bref{def:BregmanDiv}\beqref{BregmanDef:B(x_i,y_i)=0==>|x_i-y|=0} and the sequential consistency of $b$ follow from Subsection \bref{subsec:IteratedLogStronglyConvex} and Proposition \bref{prop:BregmanProperties}\beqref{BregProp:B(x_i,y_i)=0AndOneSequenceBounded==>|x_i-y_i|=0}. The limiting difference property follows from Proposition \bref{prop:BregmanProperties}\beqref{BregProp:B(x,y)=B(x,y_i)-B(y,y_i)}. In addition, Proposition \bref{prop:BregmanProperties}\beqref{BregProp:LevelSet2BoundedSufficientConditions} and the fact that $b$ is strictly convex on $\dom(b)$ imply that the second type level-sets of $B$ are bounded.  

It remains to show that $b$ satisfies Definition \bref{def:BregmanDiv}\beqref{BregmanDef:BoundedLevelSet}. Assume to the contrary that this is not true, namely that $L_1(x,\gamma)$ is not bounded for some $x\in U$ and $\gamma\in\R$. Then there is a sequence $(y_i)_{i=1}^{\infty}$ in $L_1(x,\gamma)$ such that $\lim_{i\to\infty}\|y_i\|=\infty$. Since $X$ is finite-dimensional and all the norms on $X$ are equivalent, it follows that $\lim_{i\to\infty}\|y_i\|_{\infty}=\infty$. Hence, again from the fact that $X$ is finite-dimensional, there is an index $j\in \{1,\ldots,n\}$ such that 
$\lim_{i\to\infty}|y_{i,j}|=\infty$ and hence (since $y_i\in U$) $\lim_{i\to\infty}y_{i,j}=\infty$, where $y_{i,j}$ is the $j$-th component of $y_i$ for all $i\in \N$. Let $u:(1,\infty)\to\R$ be the function defined by $u(t):=-\log(\log(t))$ for each $t\in (1,\infty)$. Then $b(z)=\sum_{k=1}^{\infty}u(z_k)$ for each $z\in U$. In addition, $B(w,z)=\sum_{k=1}^n B_k(w_k,z_k)$ for all $w, z\in U$, where $B_k(w_k,z_k):=\log(\log(z_k)/\log(w_k))+(1/(z_k\log(z_k)))(w_k-z_k)$ for all $w_k, z_k\in (1,\infty)$, namely $B_k$ is the one-dimensional version of $B$ from \beqref{eq:IteratedLog-B}, that is, $B_k$ is (for all $k\in \{1,\ldots,n\}$) the Bregman divergence associated with $u$. 

Proposition \bref{prop:BregmanProperties}\beqref{BregProp:B=0} ensures that $B_k(w_k,z_k)\geq 0$ for all $w_k,z_k\in (1,\infty)$. Therefore $B(x,y_i)=\sum_{k=1}^n B_k(x_k,y_{i,k})\geq B_j(x_j,y_{i,j})$ for every $i\in\N$. However, since $(y_i)_{i=1}^{\infty}$ is in $L_1(x,\gamma)$, it follows that 
\begin{equation}\label{eq:B_j(x_j,y_{i,j})}
B_j(x_j,y_{i,j})\leq B(x,y_i)\leq \gamma,\quad\forall i\in\N. 
\end{equation}
Since the choice of $j$ implies that $\lim_{i\to\infty} y_{i,j}=\infty$ and since $\lim_{t\to\infty}\log(t)=\infty$, it follows that $\lim_{i\to\infty}\log(\log(y_{i,j})/\log(x_j))=\infty$ and $\lim_{i\to\infty}(x_j-y_{i,j})/(y_{i,j}\log(y_{i,j}))=0$. Thus 
\begin{equation*}
\lim_{i\to\infty}B_j(x_j,y_{i,j})=\lim_{i\to\infty}\left[\log\left(\displaystyle{\frac{\log(y_{i,j})}{\log(x_j)}}\right)+\displaystyle{\frac{x_j-y_{i,j}}{y_{i,j}\log(y_{i,j})}}\right]=\infty,
\end{equation*}
in contrast to \beqref{eq:B_j(x_j,y_{i,j})}. This contradiction proves that $L_1(x,\gamma)$ is bounded for every $x\in U$ and $\gamma\in \R$, as required.

\begin{remark}
For applications, it might be that one would be interested in working with an iterated log type Bregman function having $\tilde{U}:=(0,\infty)^n$ as its zone instead of $(1,\infty)^n$. A simple way to achieve this goal is to take $\tilde{b}(\tilde{x}):=-\sum_{k=1}^n\log(\log(1+\tilde{x}_k))$, $\tilde{x}\in\tilde{U}$, and to apply Remark \bref{rem:TypesOfConvexity}\beqref{item:UniformlyConvexTranslation} and Remark \bref{rem:BregmanTranslation}. 
\end{remark}

\section{An $\ell_2$-type example}\label{sec:Ell2type}
Let $n\in\N\cup\{0\}$ be given. Let $X:=\ell_2$ with the norm  $\|(x_i)_{i=1}^{\infty}\|:=\sum_{i=1}^{2n}|x_i|+\sqrt{\sum_{i=2n+1}^{\infty}x_i^2}$, where, of course, $x=(x_i)_{i=1}^{\infty}\in X$ and $\sum_{i=1}^{2n}|x_i|:=0$ if $n=0$.  Then $(X,\|\cdot\|)$ is isomporphic to $(X,\|\cdot\|_{\ell_2})$. Indeed, when $n=0$, then the assertion is clear, and when $n>0$, then for each $x\in X$, we have
$\sqrt{\sum_{i=1}^{\infty}x_i^2}=\|x\|_{\ell_2}\leq\|x\|\leq 2\sqrt{n}\|x\|_{\ell_2}$ as a result of the inequalities  $\sqrt{t_1+t_2}\leq \sqrt{t_1}+\sqrt{t_2}\leq \sqrt{2}\sqrt{t_1+t_2}$, 
$\sqrt{\sum_{i=1}^{m}t_i^2}\leq \sum_{i=1}^{m}t_i$, and $\sum_{i=1}^{m}t_i\leq \sqrt{m}\sqrt{\sum_{i=1}^{m}t_i^2}$ for each $m\in\N$, $t_i\in [0,\infty)$, $i\in \{1,\ldots,m\}$. Thus $X^*$ is also isomorphic to $\ell_2$. For each $x\in X$, let 
\begin{equation}\label{eq:bInfinite}
b(x):=\sum_{i=1}^{\infty}\left(e^{(x_{2i-1}+x_{2i})^2}+e^{(x_{2i-1}-x_{2i})^2}-2\right).
\end{equation}
The corresponding Bregman divergence $B$ satisfies 
\begin{multline}\label{eq:Bell2}
B(x,y)=\sum_{i=1}^{\infty}\left((e^{(x_{2i-1}+x_{2i})^2}+e^{(x_{2i-1}-x_{2i})^2})-(e^{(y_{2i-1}+y_{2i})^2}+e^{(y_{2i-1}-y_{2i})^2})\right)\\
-2\sum_{i=1}^{\infty}\left(\left((y_{2i-1}+y_{2i})e^{(y_{2i-1}+y_{2i})^2}+(y_{2i-1}-y_{2i})e^{(y_{2i-1}-y_{2i})^2}\right)(x_{2i-1}-y_{2i-1})\right)\\
-2\sum_{i=1}^{\infty}\left(\left((y_{2i-1}+y_{2i})e^{(y_{2i-1}+y_{2i})^2}-(y_{2i-1}-y_{2i})e^{(y_{2i-1}-y_{2i})^2}\right)(x_{2i}-y_{2i})\right).
\end{multline}
To see that $b$ is well defined, one can use the basic limit $\lim_{t\to 0}(e^{t^2}-1)/t^2=1$ to conclude that there exists $r>0$ small enough such that for all $t\in [-r,r]$ one has $|e^{t^2}-1|\leq 2t^2$. Given $x\in X$, one has $\sum_{i=1}^{\infty}x_i^2<\infty$, and, as a result, $x_i\in [-r,r]$ for all large enough $i\in\N$. The above discussion and the inequality $(t_1+t_2)^2\leq 2(t_1^2+t_2^2)$ which holds for all $t_1,t_2\in\R$ show that the series on the right-hand side of \beqref{eq:bInfinite} converges absolutely. As for the series in \beqref{eq:Bell2}, we first observe that  the matrix form of $b'(y)$ [namely the vector $\tilde{y}:=(\tilde{y_i})_{i=1}^{\infty}$ where  $\tilde{y}_{2i-1}:=2(y_{2i-1}+y_{2i})e^{(y_{2i-1}+y_{2i})^2}+2(y_{2i-1}-y_{2i})e^{(y_{2i-1}-y_{2i})^2}$ and $\tilde{y}_{2i}:=2(y_{2i-1}+y_{2i})e^{(y_{2i-1}+y_{2i})^2}-2(y_{2i-1}-y_{2i})e^{(y_{2i-1}-y_{2i})^2}$] belongs to $\ell_2$ for each $y\in X$. Indeed, since $y\in X$, $|y_i|<0.5$ for all $i\in\N$ large enough, and hence $e^{(y_{2i-1}+y_{2i})^2}<e$ and $e^{(y_{2i-1}-y_{2i})^2}<1$ for all $i\in\N$ large enough. Using again the inequality $(t_1+t_2)^2\leq 2(t_1^2+t_2^2)$ and also the triangle inequality, we have $\tilde{y}_{2i-1}^2\leq (2e)^2\cdot 8(y_{2i-1}^2+y_{2i}^2)$ and $\tilde{y}_{2i}^2\leq (2e)^2\cdot 8(y_{2i-1}^2+y_{2i}^2)$. Thus $\sum_{i=1}^{\infty}\tilde{y}_i^2<\infty$. Since both $\tilde{y}$ and $y-x$ belong to $\ell_2$, their inner product is well defined. Hence indeed the series on the right-hand side of \beqref{eq:Bell2} converges absolutely. 

Let $v:\R^2\to\R$ be the function defined by 
\begin{equation}\label{eq:ht1t2}
v(t_1,t_2):=e^{(t_{1}+t_{2})^2}+e^{(t_{1}-t_{2})^2}-2,\quad \forall (t_1,t_2)\in \R^2. 
\end{equation}
For each $i\in\{1,\ldots,n\}$, let $X_i:=\R^2$ with the norm $\|(s_1,s_2)\|_i:=|s_1|+|s_2|$. For each $2n+1\leq i \in\N$ let  $X_i:=\R^2$ with the norm $\|(s_1,s_2)\|_i:=\sqrt{|s_1|^2+|s_2|^2}$. Let $b_i:X_i\to\R$ be defined by $b_i:=v$. The definitions of $b$ and $X$ imply that $b(x)=\sum_{i=1}^{\infty}b_i(x_{2i-1},x_{2i})$ and  $X=\bigoplus_{i=1}^{\infty}X_i$.
The Hessian matrix $H=(v''_{ij}(t))_{i,j=1}^2$ of $v$ at $t:=(t_1,t_2)$ satisfies 
\begin{subequations}
\begin{equation}
v''_{11}(t)=v''_{22}(t)=2e^{(t_1+t_2)^2}(1+2(t_1+t_2)^2)+2e^{(t_1-t_2)^2}(1+2(t_1-t_2)^2), 
\end{equation}
\begin{equation}
v''_{12}(t)=v''_{21}(t)=2e^{(t_1+t_2)^2}(1+2(t_1+t_2)^2)-2e^{(t_1-t_2)^2}(1+2(t_1-t_2)^2). 
\end{equation}
\end{subequations}
The above-mentioned relations and simple algebraic manipulations show that for each unit vector $w=(w_1,w_2)\in \R^2$ and each $t=(t_1,t_2)\in \R^2$, the following holds:
\begin{multline*}
v''(t)(w,w)\\
=2e^{(t_1+t_2)^2}(1+2(t_1+t_2)^2)(w_{1}^2+w_{2}^2+2w_1w_2)
+2e^{(t_1-t_2)^2}(1+2(t_1-t_2)^2)(w_{1}^2+w_{2}^2-2w_1w_2)\\
\geq 4(w_{1}^2+w_{2}^2)=4.
\end{multline*}
This inequality and Proposition \bref{prop:BregmanStronglyConvex}\beqref{item:eta_S} imply that $v$ is strongly convex on $\R^2$ with 4 as a strong convexity parameter. If $n>0$, then since the norm $\|\cdot\|$ of $X$ satisfies $(1/(2\sqrt{n}))\|x\|\leq \|x\|_{\ell_2}=\sqrt{\sum_{i=1}^{\infty}\|(x_{2i-1},x_{2i})\|_{X_i}^2}$ for every $x\in X$, it follows from Proposition \bref{prop:StronglyConvexSum} that $b$ is strongly convex on $X$ with $1/n$ as a parameter of strong convexity. If $n=0$, then $b$ is strongly convex on $X$ with $4$ as a parameter of strong convexity.

For each $x\in \Int(\dom(b))=X$ we can write $b'(x)=\sum_{j=1}^{\infty}h_j(x)f_j$, where for each $j\in \N$,  one has $f_j\in X^*$ and $f_j(z)=z_j$ for all $z=(z_i)_{i=1}^{\infty}\in X$ (namely, $f_j$ is the $j$-th canonical basis functional), and, in addition, for every $j\in \N$, we have  
\begin{multline*}
\begin{array}{lll}
h_{2j-1}(x)&=&2\left((x_{2j-1}+x_{2j})e^{(x_{2j-1}+x_{2j})^2}+(x_{2j-1}-x_{2j})e^{(x_{2j-1}-x_{2j})^2}\right),\\
h_{2j}(x)&=&2\left((x_{2j-1}+x_{2j})e^{(x_{2j-1}+x_{2j})^2}-(x_{2j-1}-x_{2j})e^{(x_{2j-1}-x_{2j})^2}\right).
\end{array}
\end{multline*}
Therefore $h_j$ depends continuously on finitely many (two) variables for each $j\in\N$. Thus, if we show that $b'$ maps bounded and convex subsets of $X$ to bounded subsets of $X^*$, then we can conclude from Proposition \bref{prop:weak-to-weak*} and Remark \bref{rem:SchauderBasis} that $b'$ is weak-to-weak$^*$ sequentially continuous on $X$. In fact, we show below that $b'$ is bounded and uniformly continuous on every nonempty, bounded and convex subset of $X$. Indeed, let $S$ be such a set. Then there exists $\rho>0$ such that $\|x\|<\rho$ for each $x\in S$. In particular $\sqrt{x_{2i-1}^2+x_{2i}^2}<\rho$ for all $i\in\N$. Since $v''$ exists and is continuous on $\R^2$ (where $v$ is defined in \beqref{eq:ht1t2}), it follows from the mean value theorem that $v'$ is Lipschitz continuous on any compact subset of $\R^2$. In particular, $v'$ is Lipschitz continuous on the  disc $\{(t_1,t_2)\in\R^2: \sqrt{t_1^2+t_2^2}\leq \rho\}$, with, say, $\lambda>0$  as a Lipschitz constant. This fact, combined with the fact that the matrix form of $b'$  is $(v'_{1}(x_{2i-1},x_{2i}),v'_{2}(x_{2i-1},x_{2i}))_{i=1}^{\infty}$, where $v'_j=\partial v/\partial t_j$, $j\in \{1,2\}$, and combined with the definition of the norm on $X^*$, the Cauchy-Schwarz inequality and the fact that $\|w\|_{\ell_2}\leq \|w\|$ for each $w\in X$, all imply that for all $x,y\in S$, 
\begin{multline}\label{eq:b'Lip}
\|b'(x)-b'(y)\|=\sup_{\|w\|=1}|(b'(x)-b'(y))(w)|\\
=\sup_{\|w\|=1}\left|\sum_{i=1}^{\infty}\Bigl((v'_{1}(x_{2i-1},x_{2i})-v'_{1}(y_{2i-1},y_{2i}))w_{2i-1}+(v'_{2}(x_{2i-1},x_{2i})-v'_{2}(y_{2i-1},y_{2i}))w_{2i}\Bigr)\right|\leq\\
\sqrt{\sum_{i=1}^{\infty}\Bigl(|v'_{1}(x_{2i-1},x_{2i})-v'_1(y_{2i-1},y_{2i})|^2
+|v'_{2}(x_{2i-1},x_{2i})-v'_2(y_{2i-1},y_{2i})|^2\Bigr)}\cdot\sqrt{\sum_{i=1}^{\infty}\left(w_{2i-1}^2+w_{2i}^2\right)}\\
\leq \sqrt{\sum_{i=1}^{\infty}\lambda^2(|x_{2i-1}-y_{2i-1}|^2+|x_{2i}-y_{2i}|^2)}\cdot\|w\|\\
=\sqrt{\sum_{i=1}^{n}\lambda^2(|x_{2i-1}-y_{2i-1}|^2+|x_{2i}-y_{2i}|^2)+ \sum_{i=n+1}^{\infty}\lambda^2(|x_{2i-1}-y_{2i-1}|^2+|x_{2i}-y_{2i}|^2)}\\
\leq 
\lambda\sqrt{\sum_{i=1}^{n}(|x_{2i-1}-y_{2i-1}|^2+|x_{2i}-y_{2i}|^2)}+ \lambda\sqrt{\sum_{i=n+1}^{\infty}(|x_{2i-1}-y_{2i-1}|^2+|x_{2i}-y_{2i}|^2)}\\
\leq\lambda\sum_{i=1}^{n}(|x_{2i-1}-y_{2i-1}|+|x_{2i}-y_{2i}|)+ \lambda\sqrt{\sum_{i=n+1}^{\infty}(|x_{2i-1}-y_{2i-1}|^2+|x_{2i}-y_{2i}|^2)}=\lambda\|x-y\|,
\end{multline} 
where a sum from 1 to $n$ is considered to be zero if $n=0$. 
Therefore $b'$ is Lipschitz continuous on $S$ with $\lambda$ as a Lipschitz constant. Hence $b'$ is uniformly continuous on $S$ and, in particular, $b$ is continuous. In remains to show that $b'$ is bounded on $S$. Indeed, let $y(0)\in S$ be fixed. From \beqref{eq:b'Lip}, the triangle inequality, and the fact that $\|x-y\|\leq 2\rho$ for all $x,y\in S$, it follows that $\|b'(x)\|\leq \|b'(y(0))\|+\|b'(y(0))-b'(x)\|\leq \|b'(y(0))\|+2\rho\lambda$ for each $x\in S$. 

The above discussion shows that all the conditions of Corollary  \bref{cor:BregmanFunctionFiniteDim-b'-is-cont} are satisfied and hence $b$ is a sequentially consistent Bregman function on $X$ which satisfies the limiting difference property, and, moreover, the second type level-sets of $B$ are bounded.

\section{Additional somewhat known Bregman functions}\label{sec:SomewhatKnownBregman}
In this section we consider additional Bregman functions. They are somewhat known in the sense that they, or closely related versions of them, appear in the literature, but either the discussion in the original works is not explicitly in the context of Bregman functions and/or the setting is a bit different from the setting that we consider. Moreover, as far as we know, no proofs are given in those works or elsewhere of the assertion that the claimed Bregman functions and divergences do satisfy Definition \bref{def:BregmanDiv}. Below we present a relatively brief discussion in which we focus on proving that the discussed functions and associated divergences are indeed Bregman functions and divergences, respectively. 

\subsection{The beta entropy}\label{subsec:BetaEntropy}
Given $n\in\N$, let $X:=\R^n$ with an arbitrary norm $\|\cdot\|$ and let $U:=(0,\infty)^n$. Fix $\beta\in \R$ and define the beta entropy $b:X\to(-\infty,\infty]$ by
\begin{equation*}
b(x):=\left\{\begin{array}{ll}
\displaystyle{\frac{1}{\beta(\beta-1)}}\displaystyle{\sum_{k=1}^n} (x_k^{\beta}-\beta x_k+\beta-1), &\beta\in (0,1)\cup(1,\infty),\, x\in \cl{U},\\
\displaystyle{\sum_{k=1}^n}(x_k\log(x_k)-x_k+1), &\beta=1,\, x\in \cl{U},\\
\displaystyle{\sum_{k=1}^n}(x_k-\log(x_k)+1), &\beta=0,\, x\in U,\\
\infty, & \textnormal{otherwise}.
\end{array}
\right.
\end{equation*}
The corresponding (pre-)Bregman divergence is the beta divergence:
\begin{equation*}
B(x,y)=\left\{\begin{array}{ll}
\displaystyle{\sum_{k=1}^n} \displaystyle{\left(x_k\frac{(x_k^{\beta-1}-y_j^{\beta-1})}{\beta-1}-
\frac{x_k^{\beta}-y_k^{\beta}}{\beta}\right)}, &\beta\in (0,1)\cup(1,\infty),\, (x,y)\in \cl{U}\times U,\\
\displaystyle{\sum_{k=1}^n} \left(x_k\log\left(\displaystyle{\frac{x_k}{y_k}}\right)-x_k+y_k\right), &\beta=1,\, (x,y)\in\cl{U}\times U,\\
\displaystyle{\sum_{k=1}^n}\left(\log\left(\displaystyle{\frac{y_k}{x_k}}\right)+\displaystyle{\frac{x_k}{y_k}}-1\right), &\beta=0,\, (x,y)\in U^2,\\
\infty,& \textnormal{otherwise}.
\end{array}
\right.
\end{equation*}
The beta divergence appears in the context of statistical data analysis and computational learning theory: see, for example, \cite[Section 3]{CichockiAmari2010jour}, \cite[p. 129]{Lafferty1999inproc} and some of the references therein; see also  \cite[p. 2046]{Csiszar1991jour} (note: in both \cite[p. 2046]{Csiszar1991jour} and \cite[p. 129]{Lafferty1999inproc} the $\beta$ parameter is denoted by $\alpha$). For a continuous analogue of the beta divergence $B$ (with integrals instead of sums) see, for instance, \cite[Section 3]{CichockiAmari2010jour} and \cite[p. 754]{JonesTrutzer1989jour}. It can easily be verified that the beta entropy $b$ is, up to  translation by a linear function and multiplication by a positive scalar, nothing but the negative Havrda-Charv\'at-Tsallis entropy (when $\beta\in (0,1)\cup(1,\infty)$), the negative Boltzmann-Gibbs-Shannon entropy (when $\beta=1$), and the negative Burg entropy (when $\beta=0$). As a result, we conclude from Sections \bref{sec:GibbsShannon}--\bref{sec:Burg} and Remark \bref{rem:BregmanLinear} that $b$ is a sequentially consistent Bregman function which satisfies the limiting difference property, and that $B$ is its associated Bregman divergence (and the second type level-sets of $B$ are bounded).

\subsection{The $(\alpha,\beta)$-entropy}\label{subsec:alpha-beta}
Given $n\in\N$, let $X:=\R^n$ with an arbitrary norm $\|\cdot\|$ and let $U:=(0,\infty)^n$. Fix  $\alpha\geq 1$ and $\beta\in (0,1)$, and define the $(\alpha,\beta)$-entropy $b:X\to(-\infty,\infty]$ by
\begin{equation*}
b(x):=\left\{\begin{array}{ll}
\displaystyle{\sum_{k=1}^n} (x_k^{\alpha}-x_k^{\beta}), & x\in \cl{U},\\
\infty, & \textnormal{otherwise}.
\end{array}
\right.
\end{equation*}
The corresponding (pre-)Bregman divergence is the $(\alpha,\beta)$-divergence:
\begin{equation*}
B(x,y)=\left\{\begin{array}{ll}
\displaystyle{\sum_{k=1}^n}\Bigl((x_k^{\alpha}-x_k^{\beta})-(y_k^{\alpha}-y_k^{\beta})& \\
\quad\quad\quad-(x_k-y_k)(\alpha y_k^{\alpha-1}-\beta y_k^{\beta-1})\Bigr), &\alpha>1,\, (x,y)\in\cl{U}\times U,\\
\displaystyle{\sum_{k=1}^n}\left(\beta y_k^{\beta-1}(x_k-y_k)+y_k^{\beta}-x_k^{\beta}\right), 
&\alpha=1,\, (x,y)\in \cl{U}\times U,\\
\infty,& \textnormal{otherwise}.
\end{array}
\right.
\end{equation*}
This entropy is mentioned very briefly in \cite[Example  3, p. 201]{BurachikIusem1998jour}, \cite[Example 6, p. 172]{BurachikIusemSvaiter1997jour}, \cite[Example 28, p. 394]{CensorIusemZenios1998jour},  \cite[pp. 340, 343-344]{Cruz-NetoFerreiraIusemMonteiro2007jour}, \cite[Example 1.3, p. 595]{Iusem1995jour}, \cite[p. 619]{IusemMonteiro2000jour}, \cite[p. 11]{KaplanTichatschke2004jour} and \cite[p. 245]{daSilvaSilvaEcksteinHumes2001jour}, where $X$ is assumed to be Euclidean; no proof is given that $b$ is a Bregman function. If $\alpha>1$, then we can write $b=(\alpha-1)b_{\alpha}+(1-\beta)b_{\beta}$, where $b_{\alpha}$ is defined by \beqref{eq:Havrda-Charvat-Tsallis} with $\alpha$ instead of $q$. Therefore if $\alpha>1$, then it follows from Remark \bref{rem:BregmanLinear} and Subsection \bref{subsec:HCT-Bregman} that $b$ is a sequentially consistent Bregman function which satisfies the limiting difference property and $B$ has bounded level-sets of the second type. If $\alpha=1$, then we can write $b=\tilde{b}_1+(1-\beta)b_{\beta}$, where $\tilde{b}_1(x):=\sum_{k=1}^n(x_k-1)$, $x\in \cl{U}$. Hence Remark \bref{rem:BregmanLinear} and Subsection \bref{subsec:HCT-Bregman} imply that $b$ is again a sequentially consistent Bregman function which satisfies the limiting difference property and $B$ has bounded level-sets of the second type.

\subsection{A mixed finite-dimensional $\ell_2-\ell_p$ entropy}\label{subsec:L2Lp}
Let $p\in (1,2]$ and $X:=\R^n$ ($n\in\N$) with the $\ell_p$ norm $\|\cdot\|_p$.  Let $U:=X$ and 
\begin{equation*}
b(x):=\frac{1}{2}\|x\|_p^2=\frac{1}{2}\left(\sum_{k=1}^{n}|x_k|^p\right)^{2/p},\quad x=(x_k)_{k=1}^n\in U. 
\end{equation*}
The corresponding (pre-)Bregman divergence is 
\begin{multline*}
B(x,y)=\frac{1}{2}\left(\sum_{k=1}^{n}|x_k|^p\right)^{2/p}-\frac{1}{2}\left(\sum_{k=1}^{n}|y_k|^p\right)^{2/p}\\
-\left(\sum_{k=1}^{n}|y_k|^p\right)^{(2/p)-1}\sum_{k=1}^n\sign(y_k)|y_k|^{p-1}(x_k-y_k).
\end{multline*}
The function $b$ appears in \cite[Section 4.2, Appendix 1]{Ben-TalMargalitNemirovski2001jour} and \cite{NemirovskyYudin1983book}, but in the corresponding context in which $b$ appears it is  rather weakly related to the theory of Bregman functions and divergences. Actually, to the best of our knowledge, there has been no attempt there or elsewhere to prove that $b$ is a Bregman function. However, it was shown in \cite[Appendix 1]{Ben-TalMargalitNemirovski2001jour},\cite{NemirovskyYudin1983book} that $b$ is strongly convex with a strong convexity parameter $\mu=p-1$. Since $X$ is finite-dimensional, we conclude from Corollary \bref{cor:BregmanFunctionFiniteDim-b'-is-cont} that $b$ is a sequentially consistent Bregman function which has the limiting difference property and the second type level-sets of $B$ are bounded.  

Alternatively, we can show that $b$ is a Bregman function by showing that it is a fully Legendre function since \cite[Remark 3.10]{ReemReich2018jour} implies that any fully Legendre function defined on a finite-dimensional space is a Bregman function. In order to show that $b$ is fully Legendre, it is sufficient (and necessary) to show that $b$ is differentiable on $\R^n$, strictly convex there and super-coercive (namely $\lim_{\|x\|\to\infty}f(x)/\|x\|=\infty$), again according to \cite[Remark 3.10]{ReemReich2018jour}. Differentiability of $b$ at $x=0$ is immediate, and its differentiability at any $x\neq 0$ is a consequence of the chain rule and the assumption that $p>1$; strict convexity is a consequence of the well-known facts that the $\ell_p$ norm is  strictly convex and that a norm is strictly convex if and only if any power of it with an exponent greater than 1 is a strictly convex function \cite[Theorem 2.3]{Prus2001incol}; super-coercivity of $b$ is a consequence of the fact that all norms on $\R^n$ are equivalent: indeed, this fact implies that there exists $\eta>0$ such that $\|x\|_p\geq \eta\|x\|$ for each $x\in X$, where $\|\cdot\|$ is the Euclidean norm, and hence $b(x)/\|x\|\geq \eta b(x)/\|x\|_p=\eta\|x\|_p\to\infty$ as $\|x\|\to\infty$, as required.

\subsection{Quadratic entropies}\label{subsec:Quadratic}
Let $X\neq\{0\}$ be a real Hilbert space with  norm $\|\cdot\|$, which is induced by the inner product $\langle \cdot,\cdot\rangle$. Suppose that $A:X\to X$ is a continuous linear operator which is strongly monotone (also called ``elliptic'' or ``coercive'' or ``strongly coercive''), namely there exists $\mu>0$ such that $\langle Ay-Ax, y-x\rangle\geq \mu \|x-y\|^2$ for every $(x,y)\in X^2$. Since $A$ is linear this condition is equivalent to
\begin{equation}\label{eq:Elliptic2}
\inf\{\langle Aw,w\rangle: w\in X,\, \|w\|=1\}\geq \mu.
\end{equation}
As follows from \cite[Lemma 3.4]{ReemReich2018jourJMAA}, the coercivity condition \beqref{eq:Elliptic2} holds  when $A$ is positive semidefinite and invertible. Conversely, if \beqref{eq:Elliptic2} holds, then $A$ must be invertible as a result of the Lax-Milgram theorem \cite[Corollary 5.8, p. 140]{Brezis2011book}, \cite[Theorem 2.1 and its proof, p. 169]{LaxMilgram1954incol} (and hence  \cite[Example 3.2]{ReemReich2018jour} implies that the function $b$ defined in \beqref{eq:bQuadratic} below is fully Legendre). A particular case in which \beqref{eq:Elliptic2}  holds is when $X$ is finite-dimensional and $A$ is positive definite. Now let $b:X\to\R$ be the pre-Bregman function defined by 
\begin{equation}\label{eq:bQuadratic}
b(x):=\frac{1}{2}\langle Ax,x\rangle,\quad\forall x\in X.
\end{equation}
Then $b$ is a quadratic function and its associated pre-Bregman divergence is 
\begin{equation}
B(x,y)=\frac{1}{2}\langle Ax-Ay,x-y\rangle \quad \forall\, (x,y)\in X^2. 
\end{equation}
Of course, in the particular case when $A$ is the identity operator we get the very familiar expressions $b(x)=\frac{1}{2}\|x\|^2$ and $B(x,y)=\frac{1}{2}\|x-y\|^2$. Since we have $\langle Ax,x\rangle=\langle \frac{1}{2}(A+A^*)x,x\rangle$ for every $x\in X$ and since $A+A^*$ is symmetric (self-adjoint), where $A^*$ is the adjoint of $A$, we can assume without changing $b$  that $A=A^*$, namely we assume from now on that $A$ is symmetric. 

Quadratic entropies appear in numerous places in the literature, including in the original paper of Bregman \cite{Bregman1967jour} in the special case where $A$ is the identity operator. The case where $A$ is positive definite and $X$ is a finite-dimensional space has been considered in the literature (including in \cite{Bregman1967jour}), but less frequently. We are not aware of places in the  literature which discuss, in the context of a real Hilbert space and Definition \bref{def:BregmanDiv}, the Bregman function \beqref{eq:bQuadratic} which is constructed from the operator $A$ which satisfies \beqref{eq:Elliptic2}. 

We show below that $b$ is a sequentially consistent Bregman function which satisfies the limiting difference property. Indeed, a direct computation shows that 
\begin{equation}\label{eq:b'=0.5AA*}
\langle b'(x),w\rangle=\langle Ax,w\rangle, \quad b''(x)(w,w)=\langle Aw,w\rangle,\,\, \forall (x,w)\in X^2.
\end{equation}
From \beqref{eq:b'=0.5AA*} and \beqref{eq:Elliptic2} it follows that $b''(x)(w,w)=\langle Aw,w\rangle\geq \mu$ for all unit vector $w\in X$. This inequality and Proposition \bref{prop:BregmanStronglyConvex}\beqref{item:eta_S} imply that $b$ is strongly convex on $X$ with $\mu$ as a strong convexity parameter. From \beqref{eq:b'=0.5AA*} and the fact that $A$ is continuous and hence bounded, it follows that $b'$ is Lipschitz continuous (and hence uniformly continuous) on $X$. To see that $b'$ is weak-to-weak$^*$ sequentially continuous, one simply observes that if $(x_i)_{i=1}^{\infty}$ is a sequence in $X$ which converges weakly to $x\in X$, then this assumption, the fact that $A=A^*$ and the fact that the inner product is symmetric show that for every $w\in X$, 
\begin{equation*}
\langle b'(x_i), w\rangle=\langle Ax_i,w\rangle
=\langle x_i,Aw\rangle{\xrightarrow[i\to \infty]{}}\langle x,Aw\rangle=\langle b'(w),x\rangle, 
\end{equation*}
as required. We conclude from Corollary \bref{cor:BregmanFunctionFiniteDim-b'-is-cont} that $b$ is a  sequentially consistent Bregman function which satisfies the limiting difference property and that the second type level-sets of $B$ are bounded.

\section*{Acknowledgments}
Part of the work of the first  author was done when he was at the Institute of Mathematical and Computer Sciences (ICMC), University of S\~ao Paulo,  S\~ao Carlos, Brazil (2014--2016), and was supported by FAPESP 2013/19504-9. He also wants to express his thanks to Alfredo Iusem  for helpful discussions. The second author was partially supported by the Israel Science Foundation (Grants 389/12 and 820/17), by the Fund for the Promotion of Research at the Technion and by the Technion General Research Fund. The third author thanks CNPq  grant 306030/2014-4 and FAPESP 2013/19504-9. All the authors express their thanks to the referees for their helpful remarks and to Jennifer Cobb from SEG (the Society of Exploration Geophysicists) for 
her help regarding certain historical aspects related to \cite{Burg1967conf}.

\bibliographystyle{acm}
\bibliography{biblio}

\end{document}